\documentclass[reqno,11pt]{amsart}
\usepackage[utf8]{inputenc}    
\usepackage{setspace,tikz,xcolor,mathrsfs,listings,multicol,amssymb,amsfonts,bbm}
\usepackage{pgfplots}
\usepackage{rotating}
\usepackage[vcentermath]{youngtab}
\usepackage{tkz-base}
\usepackage{tkz-euclide}
\usepackage{tikz-cd}
\usepackage{tikz}
\usepackage{comment}
\usepackage{hyperref}
\usepackage[margin=1in,includefoot,footskip=30pt]{geometry}
\usepackage{enumerate}
\usepackage{enumitem}
\usepackage{booktabs}
\usepackage{gensymb}
\usepackage[numbers,sort&compress]{natbib}
\usepackage[centertableaux]{ytableau}
\usepackage[all,cmtip]{xy}
\usetikzlibrary{arrows,matrix,math}

\tikzset{tab/.style={matrix of math nodes,column sep=-.35, row sep=-.35,text height=7pt,text width=7pt,align=center,inner sep=2,font=\footnotesize}}

\newcommand{\p}{\hat{p}}

\newcommand{\Khat}{\widehat{\mathfrak{K}}}

\newcommand{\Span}{\operatorname{span}} 
\newcommand{\Sp}{Sp}

\newcommand{\GL}{GL}
\newcommand{\II}{\mathbb{I}}
\newcommand{\NN}{\mathbb{N}}

\newcommand{\ZZ}{\mathbb{Z}}

\newcommand{\RR}{\mathbb{R}}
\newcommand{\CC}{\mathbb{C}}
\newcommand{\sps}{\mathrm{sp}}

\newcommand{\mcK}{\mathcal{K}}

\definecolor{darkred}{rgb}{0.7,0,0} 

\definecolor{UQgold}{RGB}{196, 158, 54} 
\definecolor{UQpurple}{RGB}{73, 7, 94} 
\theoremstyle{plain}
\newtheorem{thm}{Theorem}[section]
\newtheorem{lemma}[thm]{Lemma}

\newtheorem{prop}[thm]{Proposition}

\theoremstyle{definition}
\newtheorem{dfn}[thm]{Definition}

\newtheorem{remark}[thm]{Remark}

\numberwithin{equation}{section}

\usepackage[colorinlistoftodos]{todonotes}

\begin{document}
\title{Christoffel transform and symplectic skew Howe duality}

  \author[A.~Nazarov]{Anton Nazarov}
  \address[A.~Nazarov] {Department of High Energy and Elementary
    Particle Physics, St.\ Petersburg State University, University
    Embankment, 7/9, St.\ Petersburg, Russia, 199034 and 
    Beijing Institute of Mathematical Sciences and Applications (BIMSA),
\textit{Beijing} 101408, People's Republic of China}
  \email{antonnaz@gmail.com}
 \urladdr{http://hep.spbu.ru/index.php/en/1-nazarov}

 \author[P.~Nikitin]{Pavel Nikitin}
 \address[P.~Nikitin]{
   Beijing Institute of Mathematical Sciences and Applications (BIMSA),
   Beijing 101408, People's Republic of China}
 \email{pnikitin0103@yahoo.co.uk}
 
   \author[A.~Selemenchuk]{Anton Selemenchuk}
  \address[A.~Selemenchuk] {Faculty of Mathematics and Computer Science,
    St.\ Petersburg State University, University Embankment, 7/9, St.\
    Petersburg, Russia, 199034}
  \email{selemenchyk@icloud.com}
\begin{abstract}  
For a symmetric weight $w(x)$ on a finite discrete lattice and its Christoffel transforms $x^{2}w(x),x^{2}(x^{2}w(x)),\dots$, we prove that, at each Christoffel step, the conjugated projection associated with the transformed Christoffel--Darboux kernel differs from the original orthogonal projection by a rank-one operator on the subspace of functions vanishing at the origin. This provides a general mechanism for transferring local asymptotic results from an orthogonal polynomial ensemble to its Christoffel-transformed counterpart as the lattice size tends to infinity.

As a main application, we study local fluctuations of random Young diagrams arising from skew $(\mathrm{Sp}_{2n},\mathrm{Sp}_{2k})$ Howe duality. The corresponding particle ensemble is obtained from the Krawtchouk orthogonal polynomial ensemble on a quadratic lattice by a Christoffel transform. We identify four asymptotic regimes of local fluctuations in the limit $n,k\to\infty$ with $n/k\to c\in(0,\infty)$. Besides the universal bulk fluctuations governed by the discrete sine kernel and universal Airy fluctuations at the right edge of the limit shape, we obtain the discrete Hermite kernel in the critical regime
$(k-n)/\sqrt{n+k}\longrightarrow r\in\RR$, and the discrete hard-wall sine kernel at the left corner.
\end{abstract}

\maketitle

\tableofcontents

\section{Introduction}

Discrete orthogonal polynomial ensembles form one of the main classes of determinantal point processes, appearing in random matrix theory, random tilings, non-intersecting paths, and asymptotic representation theory.

They can be defined as random $n$-point configurations on discrete lattices, with the probability of the unordered configuration $\{x_1,\ldots,x_n\}$ given by

$$
\mu^{W}_n(x_1,...,x_n)=\frac{1}{Z}\prod\limits_{i<j}(x_i-x_j)^2\prod\limits_{i=1}^n W(x_i),
$$
where $W(x)$ is a non-negative weight and $Z$ is a normalization constant. 
Their asymptotic properties are well studied, see \cite{baik2007discrete, johansson2002non, Borodin_2017} and the references therein. 

Motivated by representation-theoretic applications, the present paper studies the asymptotic behavior of Christoffel transforms of such ensembles on quadratic lattices with modified weights $x^{2d}W(x)$, $d\in\NN$. Informally, one of our main results can be stated as follows: \emph{under mild assumptions on the weight, the local asymptotics of the Christoffel-transformed discrete orthogonal polynomial ensemble coincide with those of the original ensemble}. In particular, the limiting density is the same, and known bulk and edge limits, such as the sine and Airy kernels, transfer to the Christoffel-transformed ensemble; see Proposition~\ref{As_prop_neq0}. The representation in terms of a symmetric weight also leads to a less familiar local regime governed by the discrete hard-wall sine kernel; see Proposition ~\ref{As_prop_0}.

Christoffel transformations arise naturally in several representation-theoretic models. As a main application we study local fluctuations of random Young diagrams with respect to probability measures arising from skew $(\mathrm{Sp}_{2n},\mathrm{Sp}_{2k})$ Howe duality.

Classical $(\GL_{n},\GL_{k})$ Howe duality \cite{R_Howe, howe1995perspectives} leads to Cauchy identity for Schur functions $s_{\lambda}(x):=s_{\lambda}(x_{1},x_{2},\dots)$,
$$\sum_{\lambda}s_{\lambda}(x)s_{\lambda}(y)=\prod_{i,j}\frac{1}{1-x_{i}y_{j}}.
$$
The celebrated Schur probability measure on Young diagrams 
$\lambda=(\lambda_{1},\dots,\lambda_{\ell(\lambda)})$ is introduced as~\cite{Okounkov01}
$$
\mu(\lambda)=\prod_{i,j}(1-x_{i}y_{j})s_{\lambda}(x)s_{\lambda}(y).
$$
 Schur measure admits many different generalizations, in particular, dual Cauchy identity
$$\sum_{\lambda:\ell(\lambda)\leq n,\ell(\lambda')\leq
  k}s_{\lambda}(x_{1},\dots,x_{n})
s_{\lambda'}(y_{1},\dots,y_{k})=\prod_{i=1}^{n}\prod_{j=1}^{k}(1+x_{i}y_{j}),$$
with $\lambda'$ denoting transposed Young diagram, corresponds to skew $(\GL_{n},\GL_{k})$ Howe duality on the space
$\bigwedge(\CC^{n}\otimes\CC^{k})$. Dual Schur measure was studied in great detail for different choices of specialization parameters, in particular, the convergence of random diagrams to the limit shapes was established and both local and global fluctuations were described
\cite{GTW01,GTW02II,GTW02,betea2024}. It can be naturally extended to other classical series of Lie groups. For example, the space
$\bigwedge(\CC^{2n}\otimes \CC^{k})$ has commuting actions of $\Sp_{2n}$ and $\Sp_{2k}$, so it has multiplicity-free decomposition into irreducible $\Sp_{2n}\times\Sp_{2k}$-modules. Taking characters of both sides of such a decomposition we get dual Cauchy identity
$$\sum_{\lambda\subset
  k^{n}}\sps_{\lambda}(x)\sps_{\overline{\lambda}'}(y)=\prod_{i=1}^{n}\prod_{j=1}^{k}(x_{i}+x_{i}^{-1}+y_{j}+y_{j}^{-1}),$$
where $\sps_{\lambda}(x_{1},\dots,x_{n})$ are symplectic Schur
polynomials, Young diagrams are confined to $n\times k$ rectangle and
$\overline{\lambda}$ is complement of diagram $\lambda$ inside of the
rectangle. In the present paper we consider the measure
\begin{equation}
  \label{eq:symplectic-schur-measure}
  \mu_{n,k}(\lambda|x,y)=\prod_{i=1}^{n}\prod_{j=1}^{k}\frac{1}{x_{i}+x_{i}^{-1}+y_{j}+y_{j}^{-1}}
  \cdot\sps_{\lambda}(x)\sps_{\overline{\lambda}'}(y).
\end{equation}
The appearance of symplectic Schur polynomials leads to new behavior compared with the usual Schur measures, even when only one symplectic polynomial is present in the definition of the
measure~\cite{Betea18,cuenca2024symplecticschurprocess}. But the case
\eqref{eq:symplectic-schur-measure} is much more involved, as it does not admit a simple construction in terms of free fermions. Therefore we consider only the simplest specialization $x_{i}=1, y_{j}=1$ for
all $i,j$. Symplectic Schur polynomials then give the dimensions of $\Sp_{2n}$ and $\Sp_{2k}$ irreducible representations.

Such a specialization for the classical dual Cauchy identity leads to orthogonal polynomial ensemble with Krawtchouk
polynomials~\cite{johansson2002non,borodin2007asymptotics} due to Weyl dimension formula~\cite{nazarov2024skew}. Orthogonal polynomial
ensembles are determinantal -- all correlation functions are obtained as determinants of Christoffel--Darboux correlation kernel~\cite{Simon2008}.
For symplectic Schur polynomials the dimension formula gives orthogonal polynomial ensemble with semiclassical orthogonal polynomials, due to existence of short and long roots in the root
system~\cite{NNP20,NazarovSelemenchuk2025}.  As was demonstrated in our previous paper~\cite{NazarovSelemenchuk2025} these polynomials can be obtained from Krawtchouk polynomials by Christoffel transformation.

 To the best of our knowledge, only a small number of papers address the asymptotic properties of the Christoffel-transformed polynomial ensembles. Two works directly relevant to the present setting are~\cite{lazag2025explicit} and~\cite{NazarovSelemenchuk2025}. The former  studies Christoffel transformation of the poissonized Plancherel measure and $z$-measures. In the latter, the bulk asymptotic behavior of the Christoffel-transformed Krawtchouk ensemble was analyzed using an explicit integral representation of the corresponding orthogonal polynomials. 

 In the present paper, we consider Christoffel transformations for general weights on quadratic lattices and reformulate our problem in terms of orthogonal polynomials with symmetric weights. We prove that, on the subspace of functions vanishing at the origin, the difference between the conjugated Christoffel-transformed projection and the original orthogonal projection is a rank-one operator, see Proposition~\ref{Main_Lemma}. 
This allows us to transfer the local asymptotic behaviour from the original ensemble to its Christoffel transforms at the macroscopic points away from the origin. The origin requires a separate analysis and leads to the discrete hard-wall sine kernel.
 
 \emph{As the main application, we identify four asymptotic regimes of local fluctuations for the measure \eqref{eq:symplectic-schur-measure} and determine the corresponding limits of the Christoffel--Darboux correlation kernel. In addition to the universal bulk fluctuations governed by the discrete sine kernel and Airy fluctuations~\cite{TracyWidom1994,deiftorthogonal} at the right soft edge of the limit shape, we obtain the discrete Hermite kernel, introduced in \cite{borodin2007asymptotics}, in the critical regime
$(k-n)/\sqrt{n+k}\longrightarrow r\in\RR,$ and the discrete hard-wall sine kernel on the left corner.} The discrete hard-wall sine kernel has previously appeared in asymptotic representation theory in the work of A.~Borodin and J.~Kuan \cite{borodin2010random}.
 It was also studied in connection with XX chains~\cite{Fagotti2011} and its continuous analogue is well known in random matrix theory \cite{Erhardt2007Dyson, verbaarschot1994,verbaarschot2000}, see also  \cite[Sections 3.1 and
7.2.7]{forrester2010log}.

The paper is organized as follows. In Section~\ref{sec:determ-point-proc} we recall that the probability measure~\eqref{eq:young-measure} is determinantal, with a Christoffel--Darboux correlation kernel on the quadratic lattice. Section~\ref{sec:outline-method} develops the rank-one comparison and the resulting transfer principles for iterated Christoffel transformations of symmetric weights. In Section~\ref{sec:examples} we apply these results first to the modified Krawtchouk ensemble arising from skew $(\Sp_{2n},\Sp_{2k})$ Howe duality and then to symmetric Hahn ensembles, including a discrete Laguerre degeneration. In Conclusion we outline directions for further research.
Appendix~\ref{app:Hahn_edge_parameters} derives the Hahn density and edge parameters, while Appendix~\ref{sec:spectral_first_christoffel} explains the spectral-projection interpretation of the first Christoffel transform.

\section{Determinantal Point Process from Skew Howe Duality for Symplectic Groups}
\label{sec:determ-point-proc}

By the skew Howe duality for the pair \((\mathrm{Sp}_{2n},\mathrm{Sp}_{2k})\)  acting on the space 
$\bigwedge\Bigl(\mathbb{C}^{2n}\otimes\mathbb{C}^{k}\Bigr)
\cong
\bigwedge\Bigl(\mathbb{C}^{n}\otimes\mathbb{C}^{2k}\Bigr)$
we have the decomposition
\begin{align*}
\bigwedge\Bigl(\mathbb{C}^{2n}\otimes\mathbb{C}^{k}\Bigr)
\cong \bigoplus_{\lambda\subset n\times k} V_{\mathrm{Sp}_{2n}}(\lambda)\otimes V_{\mathrm{Sp}_{2k}}(\overline{\lambda}'),
\end{align*}
where \(\lambda\) is a Young diagram fitting into an \(n\times k\) rectangle, $\overline\lambda'$ is its transposed
complement in the rectangle, and by $V_{\text{Sp}_{2n}}(\lambda)$ we denote
the irreducible representation corresponding to the diagram~$\lambda$~\cite{howe1995perspectives,cheng2012dualities}. This duality induces a probability measure on Young diagrams given by
\begin{align}
\mu_{n,k}(\lambda) = \frac{\dim V_{\mathrm{Sp}_{2n}}(\lambda)\cdot\dim V_{\mathrm{Sp}_{2k}}(\overline{\lambda}')}{2^{2nk}},
\label{eq:young-measure}
\end{align}
with the normalization constant \(2^{2nk}=\dim \bigwedge\Bigl(\mathbb{C}^{2n}\otimes\mathbb{C}^{k}\Bigr)\). After a rotation (the “Russian” convention) and an appropriate scaling, a diagram \(\lambda\) can be encoded by a particle configuration on a lattice
$\{1,...,n+k\}$ 
via the coordinates
$$
a_i = \lambda_i + n - i + 1,\quad i=1,\dots,n.
$$
This bijective map allows us to transfer the measure to the space of point configurations on $\{1,...,n+k\}$.
The measure can be written in the following form using Weyl dimension formula and Lindstr\"om-Gessel-Viennot lemma (see \cite{NNP20}):  
\begin{align}
	\mu_{n,k}(a_1,...,a_n)=\frac{1}{Z_{n,k}}\prod\limits_{i<j}(a_i^2-a_j^2)^2\prod\limits_{i=1}^n a_i^2\binom{2N}{N+a_i},
\end{align}
where the $Z_{n,k}$ is a normalization constant, $N=n+k$. 

This motivates the following definition. 
For $N\in\NN$ and $d\in\ZZ_{\geq0}$, take a positive weight $W(x)>0$, defined on a set $\{i^2\mid i\in\ZZ_{\geq0},\ i\leq N\}$, and introduce a new weight
\begin{equation*}
    \widetilde{W}^{(d)}(x^2)=x^{2d}W(x^2)
\end{equation*}
and a measure 
\begin{align}
	\mu^{W,d}_n(a_1,...,a_n)=\frac{1}{Z_{n}^{(d)}}\prod\limits_{i<j}(a_i^2-a_j^2)^2\prod\limits_{i=1}^n \widetilde W^{(d)}(a_i^2),
\end{align}
where $Z_{n}^{(d)}$ is a normalization constant. The main motivation for the present paper is the asymptotic analysis of the random point configurations $(a_1,...,a_n)$ with respect to such measures.

We introduce a kernel $\mathfrak{K}^{(d)}_n(x,y)$ as the Christoffel--Darboux kernel of the form
\begin{align}\label{measure_mod}
	\mathfrak{K}^{(d)}_n(x,y)=\sqrt{\widetilde{W}^{(d)}(x^2)\widetilde{W}^{(d)}(y^2)} \sum\limits_{\ell=0}^{n-1}\pi^{(d)}_\ell(x^2)\pi^{(d)}_\ell(y^2),
\end{align}
where the polynomials $\{\pi^{(d)}_\ell\}$ on the quadratic lattice satisfy the orthogonality relations
\begin{align*}
\sum\limits_{x=0}^{N}\pi^{(d)}_m(x^2)\pi^{(d)}_\ell(x^2)\widetilde{W}^{(d)}(x^2)=\delta_{m\ell}.
\end{align*}

Then the measure $\mu^{W,d}_n$ admits a determinantal representation
\begin{align}
  \mu^{W,d}_n(a_1,\dots,a_n) = \det\big[\mathfrak{K}^{(d)}_{n}(a_i,a_j)\big]_{i,j=1}^{n}.
\end{align}
Such a structure means that the measures in question belong to the class of determinantal point processes, see \cite{Mehta, Soshnikov, baik2007discrete}, and the limiting behavior of the random point configurations is governed by the asymptotics of the kernel $\mathfrak{K}^{(d)}_{n}(x,y)$. To be more precise,
on a countable discrete space, 
pointwise convergence of the kernels implies the weak convergence of the corresponding processes, see Proposition~4.1 in~\cite{Borodin_2017}.

In the next section we study the convergence properties of the Christoffel--Darboux kernels $\mathfrak{K}^{(d)}_{n}(x,y)$.

\section{Main results}
\label{sec:outline-method}
This section studies the convergence of the Christoffel--Darboux kernels associated with iterated Christoffel transformations on a quadratic lattice. It is convenient to pass to the symmetric linear lattice $\mathfrak X=\{-N,\ldots,N\}$. Orthogonal polynomials on the quadratic lattice can then be represented by even-degree orthogonal polynomials on $\mathfrak X$ with respect to the symmetrized weights $w^{(d)}(x)$,
\begin{align}
	w^{(d)}(x) = \frac{1}{2}\widetilde{W}^{(d)}(x^2)  = \frac{1}{2}x^{2d}W(x^2),\quad x\neq 0 \text{ or } d\neq 0; \qquad w^{(0)}(0) = W(0).
\end{align}

Introduce monic orthogonal polynomials $\{P_m(x)\}_{m=0}^{2N}$ with respect to the symmetric weight $w(x)=w^{(0)}(x)$,
\begin{align*}
	\sum\limits_{x\in \mathfrak{X}}P_m(x)P_\ell(x)w(x)=
    \delta_{m\ell}h_m^2,\quad h_m=||P_m||, 
\end{align*}
and the basis of orthonormal functions $\{p_m(x)\}_{m=0}^{2N}$ in $\ell^2(\mathfrak{X})$,
\begin{align}\label{lat_X}
	p_m(x)=\frac{P_m(x)}{||P_m||}\sqrt{w(x)},\quad x\in \mathfrak{X}=\left\{-N, -N+1,\dots,N-1,N\right\}.
\end{align}
For a symmetric weight $w(x)$, even degree polynomials are even functions, therefore
\begin{align*}
	\sum\limits_{x\in \mathfrak{X}}P_{2m}(x)P_{2\ell}(x)w(x)=
    \sum\limits_{y=0}^N P_{2m}(y)P_{2\ell}(y)W(y^2)=
    \delta_{m\ell}h_{2m}^2.
\end{align*}
Hence we have 
$p_{2m}(x) = \pi^{(0)}_m(x^2)\sqrt{w(x)}$, $0\le m\le N$, 
and we can rewrite the kernel $\mathfrak{K}^{(0)}_{n}(x,y)$ in the form
\begin{align}\label{CD-kernel_0}
	\mathfrak{K}^{(0)}_n(x,y) = 
    &2\eta(x)\eta(y)\sum\limits_{\ell=0}^{n-1}p_{2\ell}(x)p_{2\ell}(y), \quad \eta(x)=\frac{1}{\sqrt{1+\delta_{x,0}}}, \quad x,y\in \mathbb{Z}.
\end{align}
It would be convenient to assume that $\eta(x)$ is defined for $x\in \mathbb{R}$,
\begin{align*}
	\eta(x)=\begin{cases}
1,\quad x\neq 0,\\
1/\sqrt{2}, \quad x=0.
\end{cases}
\end{align*} 
\begin{remark}
The normalization factor
$
\eta(x)
$
reflects the fact that the map $x\mapsto x^2$ identifies the points
$x$ and $-x$. More precisely, for any even functions $f,g$ on the
symmetric lattice $\mathfrak X=\{-N,\dots,N\}$ one has
\[
\sum_{x\in\mathfrak X}f(x)g(x)w(x)
=
2\sum_{y=1}^{N}f(y)g(y)w(y)+f(0)g(0)w(0),
\]
since every positive point of the quadratic lattice has exactly two
preimages, while the origin has only one. Consequently, the
orthonormal functions on the quadratic lattice are obtained from the
restrictions of the even orthonormal functions on $\mathfrak X$ by
renormalizing the value at the origin with the factor $\eta(x)$, which
explains the representation~\eqref{CD-kernel_0}. After the first
Christoffel transformation, however, the modified weight
\[
w^{(d)}(x)=x^{2d}w(x), \qquad d>0,
\]
vanishes at the origin. Therefore the point $0$ no longer contributes
to the scalar product, and no additional normalization is required.
This is why the factor $\eta(x)$ disappears from the kernel
representation (see \eqref{CD-kernel_1} below). Notice that this remains true for
all subsequent Christoffel transformations, since the weight continues
to vanish at the origin.
\end{remark}
Let $\{P^{(d)}_m(x)\}_{m=0}^{2N-1}$ be the monic orthogonal polynomials with respect to the weight function $x^{2d}w(x),\;d>0$, and denote the corresponding orthonormal functions by 
$$
p^{(d)}_m(x)=\frac{P^{(d)}_m(x)}{||P^{(d)}_m||_{x^{2d}w(x)}}x^d\sqrt{w(x)}.
$$ 
Similarly to~\eqref{CD-kernel_0} we obtain, for $d>0$,
\begin{align}\label{CD-kernel_1}
	\mathfrak{K}^{(d)}_n(x,y) = 
    2\sum\limits_{\ell=0}^{n-1}p^{(d)}_{2\ell}(x) p^{(d)}_{2\ell}(y).
\end{align}
To simplify the notation we set $\hat{P}_m(x)=P^{(1)}_m(x)$ and $\hat{p}_m(x)=p^{(1)}_m(x)$.
It follows from the orthogonality conditions and the symmetry of the weight that the polynomials $\{\hat{P}_m(x)\}$ can be expressed via the polynomials $\{P_m\}$ by the following relations (see \cite[Theorem 2.7.1]{Ismail_2005}):
\begin{align} \label{hatPm_p}
	&x^2\hat{P}_m(x)=P_{m+2}(x)+S_mP_m(x),\\ \notag
	& S_{2l}=-P_{2l+2}(0)/P_{2l}(0)=\beta_{2l+1}, \quad S_{2l+1}=-P'_{2l+3}(0)/P'_{2l+1}(0)=\hat{\beta}_{2l+1},
\end{align}
where the polynomials $\{\hat{P}_m\}$ and $\{P_{m}\}$ satisfy three term recurrence relations of the form
\begin{align}\label{TTR}
  &xP_m(x)=P_{m+1}(x)+\beta_mP_{m-1}(x),
  &x\hat{P}_m(x)=\hat{P}_{m+1}(x)+\hat{\beta}_m\hat{P}_{m-1}(x).
\end{align}
For general orthogonal polynomials we have $\beta_m = h_m^2 h_{m-1}^{-2}$, and it leads to the symmetric version of the three term recurrence relations:
\begin{align}\label{TTR_sym}
  &xp_m(x) = \sqrt{\beta_{m+1}}p_{m+1}(x)+\sqrt{\beta_m} p_{m-1}(x).
\end{align}

It follows directly from the equations \eqref{hatPm_p} and \eqref{TTR} that the modified polynomials for even indices become 
\begin{align}\label{even_hat}
	\hat{P}_{2m}(x)=\frac{P_{2m+1}(x)}{x},\quad \hat{h}_{2m}=h_{2m+1},\quad \hat{\beta}_{2m}=\frac{h_{2m}^2}{\hat{h}_{2m-1}^2}\beta_{2m+1}.
\end{align} 
Let us denote the even and odd subspaces of $\ell^2(\mathfrak{X})$ by $\ell^2_+(\mathfrak{X})$ and $\ell^2_-(\mathfrak{X})$:
\begin{align}
	\ell^2_+(\mathfrak{X}) = \langle p_{2i}(x)\rangle_{i=0}^{N},\quad \ell^2_-(\mathfrak{X})=\langle p_{2i+1}(x)\rangle_{i=0}^{N-1}.
\end{align}
Let 
$$\mathbb{K}_{2n}(x,y)=\sum\limits_{i=0}^{2n-1}p_i(x)p_i(y)=\sqrt{\beta_{2n}}
		\frac{
			p_{2n}(x)p_{2n-1}(y)
			-
			p_{2n-1}(x)p_{2n}(y)
		}{
			x-y
		}$$ 
be the Christoffel--Darboux kernel for $w(x)$. 
The kernel of the restriction of the corresponding projection operator to the subspace $\ell^2_+(\mathfrak{X})$ of even functions is given by $\sum\limits_{\ell=0}^{n-1}p_{2\ell}(x)p_{2\ell}(y)$, and it coincides with the kernel $\mathfrak{K}^{(0)}_n(x,y)$ up to a factor $2\eta(x)\eta(y)$:
\begin{equation} \label{sym_ker}
\mathfrak{K}^{(0)}_n(x,y)
= 2\eta(x)\eta(y)\sum\limits_{\ell=0}^{n-1}p_{2\ell}(x)p_{2\ell}(y)
=
\eta(x)\eta(y)\left[\mathbb{K}_{2n}(x,y)+\mathbb{K}_{2n}(-x,y)\right].
\end{equation}
Further, we introduce the following subspaces of polynomials with zero constant term:
\begin{align*}
&\ell^2_0(\mathfrak{X}) = 
    \langle \hat{p}_m(x)\rangle_{m=0}^{2N-1} = 
    \{p\in \ell^2(\mathfrak{X}) \mid p(0)=0\}, \\
&\ell^2_{+,0}(\mathfrak{X}) = 
   \ell^2_0(\mathfrak{X}) \cap \ell^2_+(\mathfrak{X}), \quad
\ell^2_{-,0}(\mathfrak{X}) = 
    \ell^2_0(\mathfrak{X}) \cap \ell^2_-(\mathfrak{X}),
\end{align*}
and the following operators: an operator $\mcK_{2n}$ of the orthogonal projection onto the subspace $\langle p_{2m}(x)\rangle_{m=0}^{n-1}$ in $\ell^2(\mathfrak{X})$,
$$
\mcK_{2n}[f](x) =\sum\limits_{y\in \mathfrak{X}}\sum\limits_{\ell=0}^{n-1}p_{2\ell}(x)p_{2\ell}(y)f(y)= \sum\limits_{y\in \mathfrak{X}}\frac{1}{2\eta(x)\eta(y)}\mathfrak{K}^{(0)}_n(x,y)f(y),  
$$
an operator $\hat\mcK_{2n}$ of the orthogonal projection onto the subspace $\langle \hat p_{2k}(x)\rangle_{k=0}^{n-1}$ in $\ell^2_0(\mathfrak{X})$,

\begin{align}\label{sym_ker_con_1}
\hat\mcK_{2n}[f](x) = \frac12\sum\limits_{y\in \mathfrak{X}}\mathfrak{K}^{(1)}_n(x,y)f(y),  
\end{align}
and a multiplication operator $X$,
$$
X[f](x) = xf(x).
$$
Note that $X$ is invertible on $\ell^2_0(\mathfrak{X})$.

The following proposition shows that the difference between the non-orthogonal projection $X\hat{\mcK}_{2n}X^{-1}$ and the orthogonal projection $\mcK_{2n+2}$ has rank one on the subspace $\ell^2_0(\mathfrak{X})$ of functions vanishing at the origin.

\begin{prop}
  \label{Main_Lemma}
On the subspace $\ell^2_0(\mathfrak X)$, the operator 
$$
X\hat{\mcK}_{2n}X^{-1}-\mcK_{2n+2}
$$
is represented, for $y\neq0$, by the kernel 
\begin{equation}\label{eq:Main_Lemma}
\frac{x}{2y}\mathfrak K_n^{(1)}(x,y) - 
\frac{1}{2\eta(x)}\mathfrak K_{n+1}^{(0)}(x,y) =
-\frac{\sqrt{\beta_{2n+1}}}{y} p_{2n}(x)p_{2n+1}(y).
\end{equation}
In particular, this operator has rank one.
Moreover, if $w(0)=0$, it simplifies to 
\begin{equation}\label{eq:Main_Lemma2}
\frac{x}{2y}\mathfrak K_n^{(1)}(x,y) - 
\frac{1}{2}\mathfrak K_{n+1}^{(0)}(x,y) =
-\frac{\sqrt{\beta_{2n+1}}}{y} p_{2n}(x)p_{2n+1}(y).
\end{equation}
\end{prop}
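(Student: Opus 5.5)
The plan is to reduce \eqref{eq:Main_Lemma} to a finite telescoping identity between the even and odd orthonormal functions of the original weight $w$. The key input is \eqref{even_hat}: $\hat P_{2m}(x)=P_{2m+1}(x)/x$ and $\hat h_{2m}=h_{2m+1}$.

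\textbf{Step 1: identify $\hat p_{2m}$ and rewrite the modified kernel.} First I identify the even-index orthonormal functions of the first Christoffel transform:
\[
\hat p_{2m}(x)=\frac{\hat P_{2m}(x)}{\hat h_{2m}}\,x\sqrt{w(x)}=\frac{P_{2m+1}(x)}{h_{2m+1}}\sqrt{w(x)}=p_{2m+1}(x).
\]
Both sides are polynomials times $\sqrt{w}$ with positive leading coefficient, so no sign ambiguity arises. By \eqref{CD-kernel_1},
\[
\mathfrak K^{(1)}_n(x,y)=2\sum_{\ell=0}^{n-1}p_{2\ell+1}(x)p_{2\ell+1}(y),
\]
so the first term on the left of \eqref{eq:Main_Lemma} is $\frac{x}{y}\sum_{\ell<n}p_{2\ell+1}(x)p_{2\ell+1}(y)$.

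For the second term, take $y\neq0$, so $\eta(y)=1$. Then \eqref{CD-kernel_0} gives $\frac{1}{2\eta(x)}\mathfrak K^{(0)}_{n+1}(x,y)=\sum_{\ell=0}^{n}p_{2\ell}(x)p_{2\ell}(y)$. The factor $\eta(x)$ cancels, so no case split in $x$ is needed.

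\textbf{Step 2: reduce to a polynomial identity.} After multiplying by $y$, \eqref{eq:Main_Lemma} becomes
\[
x\sum_{\ell=0}^{n-1}p_{2\ell+1}(x)p_{2\ell+1}(y)=y\sum_{\ell=0}^{n}p_{2\ell}(x)p_{2\ell}(y)-\sqrt{\beta_{2n+1}}\,p_{2n}(x)p_{2n+1}(y).
\]
I will prove this for all $x,y\in\mathfrak X$ (assuming $2n+1\le 2N$ so that $p_{2n+1}$ exists). Apply \eqref{TTR_sym} in $x$ on the left and in $y$ on the right:
\[
xp_{2\ell+1}(x)=\sqrt{\beta_{2\ell+2}}\,p_{2\ell+2}(x)+\sqrt{\beta_{2\ell+1}}\,p_{2\ell}(x),\qquad
yp_{2\ell}(y)=\sqrt{\beta_{2\ell+1}}\,p_{2\ell+1}(y)+\sqrt{\beta_{2\ell}}\,p_{2\ell-1}(y),
\]
with the convention $\sqrt{\beta_0}\,p_{-1}=0$.

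\textbf{Step 3: telescope.} On the right, the $\ell=n$ term $\sqrt{\beta_{2n+1}}p_{2n}(x)p_{2n+1}(y)$ cancels exactly the subtracted rank-one term. In the remaining sum over $\sqrt{\beta_{2\ell}}p_{2\ell}(x)p_{2\ell-1}(y)$, the $\ell=0$ term vanishes, and shifting $\ell\to\ell+1$ gives $\sum_{\ell=0}^{n-1}\sqrt{\beta_{2\ell+2}}p_{2\ell+2}(x)p_{2\ell+1}(y)$. The two sides now coincide term by term. This careful index bookkeeping, including the boundary terms $\ell=0$ and $\ell=n$, is the only real point of the proof.

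\textbf{Step 4: operator interpretation and the case $w(0)=0$.} For $f\in\ell^2_0(\mathfrak X)$ we have $f(0)=0$, so $X^{-1}f$ is well defined and the $y=0$ term drops out of every sum. By \eqref{sym_ker_con_1},
\[
X\hat\mcK_{2n}X^{-1}[f](x)=\sum_{y\ne0}\frac{x}{2y}\mathfrak K^{(1)}_n(x,y)f(y),
\]
while $\mcK_{2n+2}$ has kernel $\sum_{\ell\le n}p_{2\ell}(x)p_{2\ell}(y)$. Hence the difference operator has kernel \eqref{eq:Main_Lemma}. Since that kernel factors as a function of $x$ times a function of $y$, the operator is rank one, with range spanned by $p_{2n}$.

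When $w(0)=0$, every $p_m$ vanishes at $0$. Then both sides of \eqref{eq:Main_Lemma} are zero at $x=0$, and for $x\ne0$ we have $\eta(x)=1$. Replacing $\eta(x)$ by $1$ therefore changes nothing, which gives \eqref{eq:Main_Lemma2}.
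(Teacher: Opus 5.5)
Your proof is correct, but it takes a different route from the paper's.

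\textbf{The paper's route.} The paper works at the operator level. It uses
\[
x\hat p_{2k}=\sqrt{\beta_{2k+2}}\,p_{2k+2}+\sqrt{\beta_{2k+1}}\,p_{2k},
\]
together with an analogous relation for $x\hat p_{2k+1}$, to evaluate $\Delta_{2n}=X\hat\mcK_{2n}X^{-1}-\mcK_{2n+2}$ on the non-orthogonal basis $\{x\hat p_m\}$ of $\ell^2_0(\mathfrak X)$. Every basis vector is annihilated except $x\hat p_{2n}$, which is sent to $-\sqrt{\beta_{2n+1}}\,p_{2n}$. The kernel is then read off via the dual functional $f\mapsto\sum_{y\neq0}\frac{\hat p_{2n}(y)}{y}f(y)$, which extracts the coefficient of $x\hat p_{2n}$.

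\textbf{Your route.} You verify the kernel identity directly. After identifying $\hat p_{2\ell}=p_{2\ell+1}$, you multiply by $y$ and telescope, using \eqref{TTR_sym} in $x$ on one side and in $y$ on the other. This is in effect a parity-split version of the usual Christoffel--Darboux derivation. The ingredients are the same, since the displayed relation above is just \eqref{TTR_sym} applied to $p_{2k+1}$.

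\textbf{What each buys.} Your argument is more self-contained. It needs no basis of $\ell^2_0(\mathfrak X)$, no dual functional, and no information about the odd-index functions $\hat p_{2k+1}$. It also gives the identity in polynomial form for all $x,y\in\mathfrak X$, including $y=0$, and makes explicit the range condition $2n+1\le 2N$ that the paper leaves implicit. The paper's argument instead gives the structural picture: the conjugated projection $X\hat\mcK_{2n}X^{-1}$ and the orthogonal projection $\mcK_{2n+2}$ agree on all basis vectors but one. It also shows that the rank is exactly one, because $\Delta_{2n}[x\hat p_{2n}]\neq0$.

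\textbf{One small addition.} Your factorization alone gives only rank at most one. Add one line to get equality: test the operator on $y\,p_{2n+1}(y)\in\ell^2_0(\mathfrak X)$, which returns $-\sqrt{\beta_{2n+1}}\,p_{2n}\neq0$.
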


\begin{proof}
 It follows from \eqref{TTR_sym} and \eqref{even_hat} that
 \begin{align}\label{basis_tr}
 	&x\p_{2k}(x)=p_{2k+2}(x)\sqrt{\beta_{2k+2}}+p_{2k}(x)\sqrt{\beta_{2k+1}}, \\
 	&x\p_{2k+1}(x)=p_{2k+3}(x)\sqrt{\hat{\beta}_{2k+2}}+p_{2k+1}(x)\sqrt{\hat{\beta}_{2k+1}}.
\end{align}
We now introduce the basis
\begin{align}
\left\{ \frac{x\hat{p}_{2m}(x)}{\sqrt{\beta_{2m+2}}} \right\}_{m=0}^{N-1}
\end{align} 
in $\ell^2_{+,0}(\mathfrak{X})$ and a basis $\{x\hat{p}_{2m+1}(x)\}_{m=0}^{N-1}$ in
$\ell^2_{-,0}(\mathfrak{X})$. Define the operator $\Delta_{2n} = X\hat{\mathcal{K}}_{2n}X^{-1}-\mcK_{2n+2}$, 
restricted to the subspace 
$\ell^2_0(\mathfrak{X})$.
We have
 \begin{align}\label{Delta}
 	\Delta_{2n}[x\hat{p}_m(x)]=(X\hat{\mathcal{K}}_{2n}X^{-1}-\mcK_{2n+2})[x\hat{p}_m(x)]=\begin{cases}&0,\quad m\neq 2n,\\
 	&-p_{2n}(x)\sqrt{\beta_{2n+1}},\quad m=2n.
 	\end{cases}
 \end{align}
 Consequently, $\Delta_{2n}$ vanishes identically on $\ell^2_{-,0}(\mathfrak{X})$, and its range is the one-dimensional subspace spanned by $p_{2n}$. Its kernel is therefore
\begin{align}
    \Delta_{2n}(x,y)=-\sqrt{\beta_{2n+1}}\;p_{2n}(x)\frac{\p_{2n}(y)}{y}=-\sqrt{\beta_{2n+1}}\;p_{2n}(x)\frac{p_{2n+1}(y)}{y},
\end{align}
where the last equality follows from~\eqref{even_hat}, which completes the proof.
\end{proof}

\begin{remark}
We will be interested in the local asymptotics of the kernels
$\mathfrak K_n^{(d)}(x_n,y_n)$ as $n\to\infty$, where $x_n=un + o(n)$ and $y_n=u n + o(n)$, for some fixed $d\ge0$ and $u\neq0$. Since ${x_n}/{y_n}\longrightarrow 1$, the kernel $\mathfrak K_n^{(d)}(x_n,y_n)$ and its conjugated version $\frac{x_n}{y_n}\mathfrak K_n^{(d)}(x_n,y_n)$ have the same limit whenever either of the two limits exists. 

Therefore, by iterating Proposition~\ref{Main_Lemma}, if all the rank-one terms arising in the successive applications of \eqref{eq:Main_Lemma} tend to $0$, 
and if $\mathfrak K_{n}^{(0)}(x_n,y_n)$ converges, then, for every fixed $d>0$, the kernel $\mathfrak K_{n-d}^{(d)}(x_n,y_n)$ converges to the same limit.
Proposition~\ref{As_prop_neq0} and Remark~\ref{rem:fixed_shift} below explain why the finite shift of indices appearing here does not affect the limiting kernel in the applications.
\end{remark}
The behavior at the origin is different. We start with the corresponding limiting formulas for $d=0$ and $d=1$.
Let $[t]$ denote a symmetric nearest-integer rounding, that is, an integer-valued map satisfying
$[-t] = -[t].$
\begin{prop}\label{As_prop_0}
Let $\lim\limits_{N\to \infty}n/N=\gamma\in(0,1)$.
\begin{enumerate}
\item 
If there exists a limiting kernel $\mathbb{K}_{\lim}$
such that for all $A,B\in\ZZ$ we have
$$
\lim_{N\to \infty} \mathbb{K}_{2n}(A,B)= \mathbb{K}_{\lim}(A,B),
$$
then for any $A,B\in\ZZ$ 
	\begin{align*}
		&\lim_{N\to \infty} \mathfrak{K}^{(0)}_n(A,B) =  
        \eta(A)\eta(B)
\left[
\mathbb K_{\lim}(A,B)
+
\mathbb K_{\lim}(-A,B)
\right],
\\
 		&\lim_{N\to \infty} \mathfrak{K}^{(1)}_n(A,B) = 
        \mathbb{K}_{\lim}(A,B)-\mathbb{K}_{\lim}(-A,B);       
	\end{align*} 
\item 
If there exist a sequence $\{D_n\}$, with $D_n = o(n)$, $D_n\to\infty$,
and a limiting kernel $\mathbb{K}_{\lim}$
such that for all $A,B\in\RR$ we have
$$
\lim_{N\to \infty}D_n \mathbb{K}_{2n}([ D_n A],[ D_n B])= \mathbb{K}_{\lim}(A,B),
$$
then for any $A,B\in\RR$ 
	\begin{align*}
		&\lim_{N\to \infty} D_n\mathfrak{K}^{(0)}_n([ D_n A ], [ D_n B ]) = \eta(A)\eta(B)\left[ \mathbb{K}_{\lim}(A,B)+\mathbb{K}_{\lim}(-A,B)\right],
\\
 		&\lim_{N\to \infty} D_n\mathfrak{K}^{(1)}_n([ D_n A ], [ D_n B ]) = 
\mathbb K_{\lim}(A,B)-\mathbb K_{\lim}(-A,B).    
	\end{align*} 
\end{enumerate}
\end{prop}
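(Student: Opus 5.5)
The plan is to use the exact identities already available at finite $N$, then pass to the limit term by term. For $\mathfrak K^{(0)}_n$ the symmetrization formula \eqref{sym_ker} gives
$$\mathfrak K^{(0)}_n(x,y)=\eta(x)\eta(y)\left[\mathbb K_{2n}(x,y)+\mathbb K_{2n}(-x,y)\right].$$
The first claim in each part therefore follows immediately from the assumed pointwise convergence of $\mathbb K_{2n}$. In part (1) we evaluate at $(A,B)$ and $(-A,B)$. In part (2) we use $[-D_nA]=-[D_nA]$, which holds because the rounding is symmetric, and we use that $\eta([D_nA])=\eta(A)$ for $N$ large. The latter holds because $[D_nA]=0$ for all large $N$ exactly when $A=0$, since $D_n\to\infty$. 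Multiplying by $D_n$ gives the stated limit.

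For $\mathfrak K^{(1)}_n$ we first need a finite-$N$ representation in terms of $\mathbb K$. For this I use Proposition~\ref{Main_Lemma} with $n$ replaced by $n$; the index shift is harmless, as discussed below. Rewriting \eqref{eq:Main_Lemma} for $x,y\neq 0$ gives
$$\mathfrak K^{(1)}_n(x,y)=\frac{y}{\eta(x)x}\,\mathfrak K^{(0)}_{n+1}(x,y)-\frac{2\sqrt{\beta_{2n+1}}}{x}\,p_{2n}(x)p_{2n+1}(y).$$
A cleaner route avoids the factor $y/x$, which does not tend to $1$ at fixed $A\neq B$. By \eqref{even_hat}, $\hat p_{2\ell}(x)=p_{2\ell+1}(x)\,\mathrm{sign}$-type factor; precisely, $\hat p_{2\ell}(x)=\frac{P_{2\ell+1}(x)}{x\,h_{2\ell+1}}|x|\sqrt{w(x)}$, which equals $\pm p_{2\ell+1}(x)$ with sign $\mathrm{sgn}(x)$. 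Hence
$$\mathfrak K^{(1)}_n(x,y)=2\,\mathrm{sgn}(x)\mathrm{sgn}(y)\sum_{\ell=0}^{n-1}p_{2\ell+1}(x)p_{2\ell+1}(y)=\mathrm{sgn}(xy)\left[\mathbb K_{2n}(x,y)-\mathbb K_{2n}(-x,y)\right],$$
where the last step uses parity of the $p_m$. I must check the sign convention for $x^d\sqrt{w}$ in the definition of $p^{(1)}_m$. Reading it literally as $x\sqrt{w(x)}$, the sign cancels: $\hat p_{2\ell}=p_{2\ell+1}$ exactly. In that case $\mathfrak K^{(1)}_n(x,y)=\mathbb K_{2n}(x,y)-\mathbb K_{2n}(-x,y)$, with no sign factor and no $\eta$, consistent with the claimed formula. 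At $x=0$ or $y=0$ both sides vanish, because odd functions vanish at $0$ and the weight $x^2w$ vanishes there.

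With this identity, the second claims follow exactly as in the first paragraph. Part (1) is a direct pointwise limit. Part (2) uses $[-D_nA]=-[D_nA]$ together with multiplication by $D_n$.

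The main obstacle is bookkeeping rather than analysis. The steps to verify carefully are:
\begin{itemize}
\item the even/odd decomposition $\mathbb K_{2n}(x,y)\pm\mathbb K_{2n}(-x,y)=2\sum_{\ell=0}^{n-1} p_{2\ell}(x)p_{2\ell}(y)$, respectively $2\sum_{\ell=0}^{n-1} p_{2\ell+1}(x)p_{2\ell+1}(y)$, using $p_m(-x)=(-1)^mp_m(x)$ for the symmetric weight;
\item the identification $\hat p_{2\ell}=p_{2\ell+1}$ via $\hat h_{2\ell}=h_{2\ell+1}$ from \eqref{even_hat};
\item the behaviour of the rounding and $\eta$ at the origin.
\end{itemize}
The hypothesis $n/N\to\gamma\in(0,1)$ enters only to ensure $2n\le 2N$, so that all these polynomials exist. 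No asymptotic estimate beyond the assumed convergence is required.
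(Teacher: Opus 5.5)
Your argument is correct and is essentially the paper's proof. Both write $\mathfrak K^{(0)}_n=\eta(x)\eta(y)[\mathbb K_{2n}(x,y)+\mathbb K_{2n}(-x,y)]$ and $\mathfrak K^{(1)}_n=\mathbb K_{2n}(x,y)-\mathbb K_{2n}(-x,y)$, the latter via $\hat p_{2\ell}=p_{2\ell+1}$ from \eqref{even_hat} (with the literal factor $x\sqrt{w(x)}$, so there is no sign issue), and then pass to the limit using $[-D_nA]=-[D_nA]$ and $\eta([D_nA])=\eta(A)$ for large $N$. The detour through Proposition~\ref{Main_Lemma} is unnecessary; its promised discussion of the index shift never appears, so simply delete it.
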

\begin{proof}
We first prove part~(1). By~\eqref{sym_ker} we have
\begin{align}\label{eq:K^0_sum}
\mathfrak K_n^{(0)}(x,y)
=
\eta(x)\eta(y)[\mathbb K_{2n}(x,y)
+
\mathbb K_{2n}(-x,y)].
\end{align}
Therefore, for any fixed integers $A,B$,
\begin{align}
\mathfrak K_n^{(0)}(A,B)
=
\eta(A)\eta(B)[\mathbb K_{2n}(A,B)
+
\mathbb K_{2n}(-A,B)].
\end{align}
Passing to the limit and using the assumed convergence of
$\mathbb K_{2n}$, we obtain
\begin{align}
\lim_{N\to\infty}
\mathfrak K_n^{(0)}(A,B)
=
\eta(A)\eta(B)[\mathbb K_{\mathrm{lim}}(A,B)
+
\mathbb K_{\mathrm{lim}}(-A,B)].
\end{align}
Next, by~\eqref{CD-kernel_1} and~\eqref{even_hat}, we have
\begin{align}\label{eq:K^1_diff}
\mathfrak K_n^{(1)}(x,y)
=
2\sum_{\ell=0}^{n-1}
\hat p_{2\ell}(x)\hat p_{2\ell}(y)
=
2\sum_{\ell=0}^{n-1}
p_{2\ell+1}(x)p_{2\ell+1}(y)=
\mathbb K_{2n}(x,y)
-
\mathbb K_{2n}(-x,y).
\end{align}
Therefore, the limit is
\begin{align}
\lim_{N\to\infty}
\mathfrak K_n^{(1)}(A,B)
=
\mathbb K_{\mathrm{lim}}(A,B)
-
\mathbb K_{\mathrm{lim}}(-A,B),
\end{align}
which proves part~(1). 

We now prove part~(2).
Again using~\eqref{eq:K^0_sum} and substituting
$x=[D_nA]$ and
$y=[ D_nB]$,
we get
\begin{align*}
\begin{aligned}
D_n
\mathfrak K_n^{(0)}
([D_nA],
[D_nB]) =
\eta([D_nA])\eta([D_nB])
D_n[\mathbb K_{2n}
([D_nA],
[D_nB]) +
\mathbb K_{2n}
(-[D_nA],
[D_nB])].
\end{aligned}
\end{align*}
By the symmetry of the rounding,
$$
-[D_nA]
=
[-D_nA]
=
[D_n(-A)].
$$
Hence the assumed convergence of the rescaled kernels applies to the two pairs $(A,B)$ and $(-A,B)$, respectively. Since $D_n\to\infty$, for every fixed $A,B\in\RR$ we have
$$
\eta([D_nA])=\eta(A),
\qquad
\eta([D_nB])=\eta(B)
$$
for all sufficiently large $n$. Thus,
$$
\lim_{N\to\infty} D_n \mathfrak K_n^{(0)}([D_nA], [D_nB]) = \eta(A)\eta(B)[
\mathbb K_{\mathrm{lim}}(A,B) +
\mathbb K_{\mathrm{lim}}(-A,B)].
$$
Similarly, using again~\eqref{eq:K^1_diff} and the symmetry of the rounding, we obtain
\begin{align*}
\lim_{N\to\infty}
D_n
\mathfrak K_n^{(1)}([D_nA],[D_nB])=
\mathbb K_{\mathrm{lim}}(A,B) -
\mathbb K_{\mathrm{lim}}(-A,B).
\end{align*}
This completes the proof.
\end{proof}
Away from the origin, the rank-one comparison leads to a particularly simple transfer principle. If the localization center stays at a nonzero macroscopic distance from zero and the local asymptotics of
the ordinary kernel are stable under bounded shifts of the projection rank, then every fixed Christoffel transform has the same local limiting kernel.

We will use this principle in two types of local scaling. For discrete local limits, we take $x_N(A)=\xi_N+A$, where
$\{\xi_N\}$ is integer-valued, $\xi_N/N\to v\neq0$, and
$A\in\ZZ$, $\ZZ_{\geq0}$, or $\ZZ_{\leq0}$. This corresponds to the choice $D_N=1$ in the next proposition. For local limits on a growing scale, we take
\[
x_N(A)=\left\lfloor \xi_N+D_NA\right\rfloor,
\qquad A\in\RR,
\]
where $\xi_N/N\to v\neq0$ and $D_N=o(N)$.
In both cases $x_N(A)/N\to v$ for every fixed $A$.

The next proposition treats both situations simultaneously, and in fact only uses this last macroscopic localization property.

\begin{prop}\label{As_prop_neq0}
Let $N\to\infty$, and let $n=n(N)$ be a sequence of positive
integers such that
\[
n\longrightarrow\infty,
\qquad
N-n\longrightarrow\infty.
\]
Let $v\in[-1,1]\setminus\{0\}$, $\mathbb X\in
\{\ZZ,\ZZ_{\geq0},\ZZ_{\leq0},\RR\}$, and let $\{D_N\}$ be a positive sequence. Suppose that there is a sequence of maps
\[
x_N:\mathbb X\longrightarrow\mathfrak X=\{-N,\ldots,N\}
\]
such that, for every fixed $A\in\mathbb X$, 
${x_N(A)}/{N}\longrightarrow v$.

Suppose that there exists a limiting kernel $\mathbb K_{\lim}$ on $\mathbb X^2$ such that, for every fixed $j\in\ZZ$ and all fixed $A,B\in\mathbb X$,
\[
\lim_{N\to\infty}
D_N\,
\mathbb K_{2n+j}
\bigl(x_N(A),x_N(B)\bigr)
=
\mathbb K_{\lim}(A,B).
\]
Then, for every fixed $d\in\ZZ_{\geq0}$ and all
$A,B\in\mathbb X$,
\[
\lim_{N\to\infty}
D_N\,
\mathfrak K_n^{(d)}
\bigl(x_N(A),x_N(B)\bigr)
=
\mathbb K_{\lim}(A,B).
\]
\end{prop}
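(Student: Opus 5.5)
The plan is to reduce everything to the \emph{full} Christoffel--Darboux kernels on the symmetric lattice $\mathfrak X$, and to prove by induction on $d$ that these kernels have the same local limit; only at the end do we pass to the even part $\mathfrak K^{(d)}_n$. For $d\ge0$ let $\mathbb K^{[d]}_m(x,y)=\sum_{i<m}p^{(d)}_i(x)p^{(d)}_i(y)$ be the full kernel for the weight $x^{2d}w(x)$, with $\mathbb K^{[0]}_m=\mathbb K_m$. We will prove the following claim $(\mathcal C_d)$: for every fixed $j\in\ZZ$ and $A,B\in\mathbb X$,
\[
D_N\,\mathbb K^{[d]}_{2n+j}(x_N(A),x_N(B))\to\mathbb K_{\lim}(A,B).
\]
The claim $(\mathcal C_0)$ is exactly the hypothesis.

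\emph{Two preliminary estimates.} First, applying $(\mathcal C_d)$ with $j$ and $j+1$ and subtracting, we get $D_N\,p^{(d)}_{2n+j}(x_N(A))\,p^{(d)}_{2n+j}(x_N(B))\to0$. Taking $A=B$ gives
\[
D_N\,p^{(d)}_{2n+j}(x_N(A))^2\to0,
\]
and hence, by Cauchy--Schwarz, $D_N|p^{(d)}_m(x_N(A))\,p^{(d)}_{m'}(x_N(B))|\to0$ for all indices $m,m'=2n+O(1)$. Second, the recurrence coefficients satisfy $\sqrt{\beta^{(d)}_m}\le N$, because they are off-diagonal entries of a symmetric Jacobi matrix representing multiplication by $x$ on $\ell^2$ of a measure supported in $[-N,N]$, whose norm is at most $N$.

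\emph{Reflected term.} By parity, $p^{(d)}_i(-x)=(-1)^ip^{(d)}_i(x)$, so
\[
\mathfrak K^{(d)}_n(x,y)=\mathbb K^{[d]}_{2n}(x,y)+\mathbb K^{[d]}_{2n}(-x,y)
\]
for $d\ge1$; for $d=0$ this is \eqref{sym_ker}, and the factor $\eta\equiv1$ eventually since $x_N(A)\ne0$ for large $N$. For the reflected term we use the Christoffel--Darboux formula, whose denominator is $-x-y$. Since $x_N(A),x_N(B)\sim vN$, we have $|x+y|\ge|v|N$ for large $N$. The numerator is $\sqrt{\beta}$ times products $p_{2n}p_{2n-1}$ evaluated at $\pm x$ and $y$, and the absolute values of these products are unchanged by the reflection $-x\mapsto x$. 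Combining the two estimates, $D_N|\mathbb K^{[d]}_{2n}(-x,y)|\le N\cdot o(1)/(|v|N)\to0$. Thus $(\mathcal C_d)$ implies the proposition for that $d$.

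\emph{Induction step $(\mathcal C_{d-1})\Rightarrow(\mathcal C_d)$.} We apply the argument of Proposition~\ref{Main_Lemma} to the weight $x^{2(d-1)}w$ (for $d-1\ge1$ this weight vanishes at $0$, and we restrict to $\ell^2_0$ or $\ell^2$ as appropriate), but now for the full projection. By \eqref{basis_tr}, written for this weight, $\Span\{x\,p^{(d)}_i\}_{i<m}$ is contained in $\Span\{p^{(d-1)}_i\}_{i<m+2}$, and each $x\,p^{(d)}_i$ involves only $p^{(d-1)}_i$ and $p^{(d-1)}_{i+2}$. Computing $XP^{[d]}_mX^{-1}-P^{[d-1]}_{m+2}$ on the basis $\{x\,p^{(d)}_i\}$ exactly as in \eqref{Delta}, we expect this difference to vanish except on $x\,p^{(d)}_m$ and $x\,p^{(d)}_{m+1}$. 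Its kernel should therefore be a sum of at most two terms of the form
\[
-\sqrt{\beta^{(d-1)}_{m+1+\epsilon}}\;p^{(d-1)}_{m+\epsilon}(x)\,\frac{p^{(d-1)}_{m+1+\epsilon}(y)}{y},\qquad \epsilon\in\{0,1\},
\]
or similar combinations with indices $m+O(1)$; pinning down their exact form is a routine but careful computation. With $m=2n+j$, each such term multiplied by $D_N$ is $O(N)\cdot o(1)/(|v|N)\to0$ by the preliminary estimates at level $d-1$. Finally $x/y\to1$, so the conjugation by $X$ does not change the limit. This gives $(\mathcal C_d)$ for every fixed $j$, because $(\mathcal C_{d-1})$ holds for all shifts.

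The main obstacle is the rank-two generalization of Proposition~\ref{Main_Lemma} to the full (not only even) projection, with the correct coefficients, together with the uniform bound $\sqrt{\beta}\le N$. The required decay comes entirely from the shift-stability hypothesis through $D_Np^2\to0$, so the shifted indices $m+O(1)$ are essential, and this is precisely why the hypothesis is assumed for all $j$.
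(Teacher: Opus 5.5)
Your proof is correct, but it is organized differently from the paper's. The paper stays with the even kernels throughout. It removes the reflected term once, at $d=0$, for all $\mathfrak K^{(0)}_{n+\ell}$. It then propagates the bound $\sqrt{D_N}\,p^{(d)}_{2n+i}(x_N)\to0$ up the levels by hand: even indices via $p^{(d+1)}_{2m}=p^{(d)}_{2m+1}$, odd ones via \eqref{basis_tr}. Finally it inducts on $d$ using the rank-one identity of Proposition~\ref{Main_Lemma}, which is already proved.

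You instead prove the stronger, parity-free claim $(\mathcal C_d)$ for the full kernels of $x^{2d}w$ under all shifts $j$. This buys three things:
\begin{itemize}
\item the level-$d$ pointwise bounds come for free by differencing $(\mathcal C_d)$;
\item the factor $\eta$ and the passage to the quadratic lattice enter only once, at the end;
\item the reflected term is handled uniformly in $d$.
\end{itemize}
The price is a rank-two analogue of Proposition~\ref{Main_Lemma}. It is proved exactly as that proposition.

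One correction to that identity. Your structural claim is right: the difference vanishes except on $x\,p^{(d)}_m$ and $x\,p^{(d)}_{m+1}$. The displayed form of the correction, however, is not. The basis $\{x\,p^{(d)}_i\}$ has dual functionals $f\mapsto\sum_{y\neq0}p^{(d)}_i(y)f(y)/y$, and with them one gets, for $y\neq0$,
\[
\frac{x}{y}\,\mathbb K^{[d]}_m(x,y)-\mathbb K^{[d-1]}_{m+2}(x,y)
=-\sum_{i=m}^{m+1}c_i\,p^{(d-1)}_i(x)\,\frac{p^{(d)}_i(y)}{y},
\qquad
c_i=\bigl\langle x\,p^{(d)}_i,\,p^{(d-1)}_i\bigr\rangle,\quad |c_i|\le N .
\]
For even $i$ this reduces to your expression, since $p^{(d)}_i=p^{(d-1)}_{i+1}$ and $c_i=\sqrt{\beta^{(d-1)}_{i+1}}$. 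For odd $i$, however, the factor $p^{(d)}_i(y)$ is a genuinely level-$d$ function. So the estimates you derive from $(\mathcal C_{d-1})$ do not apply to it directly, and you cannot yet use $(\mathcal C_d)$, which is what you are proving.

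The missing step is this. By \eqref{basis_tr}, $y\,p^{(d)}_i(y)$ is a combination of $p^{(d-1)}_i(y)$ and $p^{(d-1)}_{i+2}(y)$, with coefficients of modulus at most $\|x\,p^{(d)}_i\|\le N$. Since $|y_N|\asymp N$, this gives $\sqrt{D_N}\,p^{(d)}_i(y_N)\to0$ for $i=2n+O(1)$. This is precisely the paper's propagation step. With it inserted, each correction term times $D_N$ is bounded by $(N/|y_N|)\cdot o(1)\to0$, and your induction closes.
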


\begin{proof}
Fix $A,B\in\mathbb X$, and set $x_N=x_N(A)$ and $y_N=x_N(B)$.
By the assumptions, $x_N/N\to v$ and $y_N/N\to v$, where
$v\neq0$. Hence $x_N/y_N\to1$ and
$(x_N+y_N)/N\to2v\neq0$. In particular, $x_N,y_N\neq0$ for all
sufficiently large $N$, so that $\eta(x_N)=\eta(y_N)=1$.

We first note that the recurrence coefficients at every Christoffel
level satisfy $\beta_m^{(d)}\leq N^2$.
Indeed, multiplication by $x$ on the support
$\mathfrak X\subset[-N,N]$ has operator norm at most $N$, while the three-term recurrence gives
$\sqrt{\beta_{m+1}^{(d)}}=
\langle xp_m^{(d)},p_{m+1}^{(d)}\rangle$. Thus
$\sqrt{\beta_{m+1}^{(d)}}\leq N$.

Next, the assumed convergence under fixed shifts of the kernel rank
implies that, for every fixed $i\in\ZZ$ and $A\in\mathbb X$,
\[
\sqrt{D_N}\,p_{2n+i}(x_N(A))\longrightarrow0.
\]
Indeed,
$D_Np_{2n+i}(x_N(A))^2$ is equal to
\[
D_N\mathbb K_{2n+i+1}(x_N(A),x_N(A))
-
D_N\mathbb K_{2n+i}(x_N(A),x_N(A)),
\]
and both terms converge to $\mathbb K_{\lim}(A,A)$.
The assumptions $n\to\infty$ and $N-n\to\infty$ ensure that all
fixed shifted indices occurring below are admissible for sufficiently
large $N$.

We shall prove the slightly stronger statement that, for every fixed
$d,\ell\in\ZZ_{\geq0}$,
\[
D_N\mathfrak K_{n+\ell}^{(d)}(x_N,y_N)
\longrightarrow
\mathbb K_{\lim}(A,B).
\]

We first consider $d=0$. By \eqref{sym_ker},
\[
D_N\mathfrak K_{n+\ell}^{(0)}(x_N,y_N)
=
D_N\mathbb K_{2n+2\ell}(x_N,y_N)
+
D_N\mathbb K_{2n+2\ell}(-x_N,y_N).
\]
The first term converges to $\mathbb K_{\lim}(A,B)$ by the
assumptions. For the reflected term, the Christoffel--Darboux formula
gives
\[
\mathbb K_{2n+2\ell}(-x_N,y_N)
=
\sqrt{\beta_{2n+2\ell}}\,
\frac{
p_{2n+2\ell}(-x_N)p_{2n+2\ell-1}(y_N)
-
p_{2n+2\ell-1}(-x_N)p_{2n+2\ell}(y_N)
}{
-x_N-y_N
}.
\]
Since $\sqrt{\beta_{2n+2\ell}}\leq N$ and
$|x_N+y_N|\asymp N$, the prefactor is $O(1)$. By parity and the
estimate above, each product in the numerator becomes $o(1)$ after
multiplication by $D_N$. Hence
$D_N\mathbb K_{2n+2\ell}(-x_N,y_N)\to0$, and the required limit
follows for $d=0$.

It remains to propagate the pointwise estimate through the successive
Christoffel transformations. We claim that, for every fixed
$d\in\ZZ_{\geq0}$, $i\in\ZZ$, and $A\in\mathbb X$,
\[
\sqrt{D_N}\,p_{2n+i}^{(d)}(x_N(A))\longrightarrow0.
\]
The case $d=0$ was proved above. Assume the assertion holds at level
$d$. The exact Christoffel relation
$p_{2m}^{(d+1)}=p_{2m+1}^{(d)}$ gives the assertion immediately for
even indices. For odd indices,
\[
x\,p_{2m+1}^{(d+1)}(x)
=
\sqrt{\beta_{2m+2}^{(d+1)}}\,p_{2m+3}^{(d)}(x)
+
\sqrt{\beta_{2m+1}^{(d+1)}}\,p_{2m+1}^{(d)}(x).
\]
Since $\sqrt{\beta_k^{(d+1)}}\leq N$ for every $k$ and $|x_N(A)|\asymp N$, we have
$\sqrt{\beta_k^{(d+1)}}/|x_N(A)|=O(1)$.
Taking $m=n+r$, with $r$ fixed, and $x=x_N(A)$, dividing the preceding identity by $x$, multiplying by $\sqrt{D_N}$, and using the induction hypothesis therefore gives
$\sqrt{D_N}\,p_{2n+2r+1}^{(d+1)}(x_N(A))\to0$.

We now proceed by induction on $d$ in the kernel asymptotics. Suppose that, for some fixed $d\geq0$,
$D_N\mathfrak K_{n+\ell}^{(d)}(x_N,y_N)\to
\mathbb K_{\lim}(A,B)$ for every fixed $\ell\geq0$.
Applying Proposition~\ref{Main_Lemma} to the weight $w^{(d)}$, with
$n$ replaced by $n+\ell$, gives
\[
\frac{x_N}{y_N}
\mathfrak K_{n+\ell}^{(d+1)}(x_N,y_N)
-
\mathfrak K_{n+\ell+1}^{(d)}(x_N,y_N)
=
-\frac{
2\sqrt{\beta_{2n+2\ell+1}^{(d)}}
}{
y_N
}
p_{2n+2\ell}^{(d)}(x_N)
p_{2n+2\ell+1}^{(d)}(y_N).
\]
For $d=0$, the same identity holds here because
$\eta(x_N)=1$ for sufficiently large $N$.
After multiplication by $D_N$, the right-hand side tends to zero:
$\sqrt{\beta_{2n+2\ell+1}^{(d)}}/|y_N|\leq N/|y_N|=O(1)$, while
\[
D_N
p_{2n+2\ell}^{(d)}(x_N)
p_{2n+2\ell+1}^{(d)}(y_N)
\longrightarrow0
\]
by the pointwise estimate proved above. Since $x_N/y_N\to1$, the
induction hypothesis with $\ell+1$ gives
$D_N\mathfrak K_{n+\ell}^{(d+1)}(x_N,y_N)\to
\mathbb K_{\lim}(A,B)$.

The induction is complete. Taking $\ell=0$ proves the proposition.
\end{proof}

\begin{remark}\label{rem:fixed_shift}
The assumption of convergence under fixed shifts of the kernel rank
fits naturally into the asymptotic framework of
Baik--Kriecherbauer--McLaughlin--Miller~\cite{baik2007discrete}.
Indeed, in \cite[\S1.3.1]{baik2007discrete} the degree of the
orthogonal polynomial is taken in the form $k=cN+\lambda$, where
$c\in(0,1)$ is fixed and $\lambda$ remains bounded as $N\to\infty$.
Thus a fixed shift $k\mapsto k+j$ only changes the bounded parameter
$\lambda$. In particular, their bulk and soft-edge kernel asymptotics
are naturally stable under the fixed rank shifts required in
Proposition~\ref{As_prop_neq0}.

The same type of stability is also natural in other approaches to
local asymptotics, including the spectral-projection framework of
Borodin--Olshanski~\cite{borodin2007asymptotics, Borodin_2017} and fermionic formulations of the correlation
kernel~\cite{Okounkov01}. We will make repeated use of these connections in the next
section when verifying the assumptions of
Proposition~\ref{As_prop_neq0} in concrete asymptotic regimes.
\end{remark}

We will also use the following dual picture in the study of the examples in the next section.
\begin{dfn}[Dual-ensemble kernel]\label{def:dual}
Let $w$ be a weight on the symmetric lattice $\mathfrak{X}=\{-N,...,N\}$, let $\mathfrak{K}^{(d)}_{n}$ be the associated ($d$-fold Christoffel-transformed) correlation kernel on the quadratic lattice, written in the square-root
coordinates $x,y\in\{0,\dots,N\}$, and set
$D=\operatorname{diag}\bigl((-1)^{x}\bigr)_{x\in\{0,\dots,N\}}$. 
For $d\in\ZZ_{\ge0}$ 
and $x,y\in\{0,\dots,N\}$ 
define
$$
\Khat^{(d)}_{n}(x,y) = (-1)^{x-y}\bigl(\delta_{xy}-\mathfrak{K}^{(d)}_{n}(x,y)\bigr)
=\bigl(D(\II-\mathfrak{K}^{(d)}_{n})D\bigr)(x,y).
$$
Similarly, for any kernel $K$ on the symmetric lattice 
$\mathfrak X=\{-N,\ldots,N\}$, we define the dual kernel $\widehat{K}(x,y)$ by
$$
\widehat{K}(x,y) = 
  (-1)^{x-y}\bigl(\delta_{xy}-K(x,y)\bigr),
\quad x,y\in \mathfrak X.
$$
\end{dfn}

\begin{remark}\label{rem:dual}
Since $D^{2}=\II$, the map $M\mapsto DMD$ is a similarity that preserves symmetry, idempotency and every principal minor; hence $\Khat^{(d)}_{n}$ is a symmetric projection kernel and defines the same determinantal hole process as the standard kernel
$\delta_{xy}-\mathfrak{K}^{(d)}_{n}$, with identical correlation functions. No property of $w$ beyond $\mathfrak{K}^{(d)}_{n}$ being a projection is used, so the construction applies to any weight discussed in this section. 
This signed complementary-kernel convention is the one used in the dual-ensemble construction of
\cite[Propositions~7.2 and~7.3]{baik2007discrete}.
\end{remark}

\begin{remark}\label{dual_kernels}
Proposition~\ref{As_prop_neq0} remains valid for the dual kernels when $v>0$.
More precisely, under the other assumptions of that proposition, suppose that, for every fixed \(j\in\ZZ\) and all fixed
\(A,B\in\mathbb X\),
\[
D_N\,
\widehat{\mathbb K}_{2n+j}
\bigl(x_N(A),x_N(B)\bigr)
\longrightarrow
\mathbb K_{\lim}(A,B).
\]
Then, for every fixed \(d\in\ZZ_{\geq0}\),
\[
D_N\,
\Khat^{(d)}_n
\bigl(x_N(A),x_N(B)\bigr)
\longrightarrow
\mathbb K_{\lim}(A,B).
\]

Indeed, for consecutive ranks,
\[
\widehat{\mathbb K}_{k+1}(x,y)
-
\widehat{\mathbb K}_{k}(x,y)
=
-(-1)^{x-y}p_k(x)p_k(y).
\]
In particular,
\[
p_k(x)^2
=
\widehat{\mathbb K}_{k}(x,x)
-
\widehat{\mathbb K}_{k+1}(x,x).
\]
Thus convergence of the dual kernels under fixed rank shifts gives the same pointwise estimate for the orthonormal functions as the one used in the proof of Proposition~\ref{As_prop_neq0}.

Since \(x_N(A)/N\to v\neq0\), the points \(x_N(A)\) and \(x_N(B)\)
are nonzero and have the same sign for all sufficiently large \(N\).
For such points,
\[
\Khat^{(0)}_n(x,y)
=
\widehat{\mathbb K}_{2n}(x,y)
+
\widehat{\mathbb K}_{2n}(-x,y),
\]
so the reflected term is controlled by the same
Christoffel--Darboux estimate as in the proof of
Proposition~\ref{As_prop_neq0}.

Finally, for \(x,y\neq0\), the definition of the dual kernels and the
Christoffel comparison identity give
\[
\frac{x}{y}\Khat^{(d+1)}_n(x,y)
-
\Khat^{(d)}_{n+1}(x,y)
=
-(-1)^{x-y}
\left[
\frac{x}{y}\mathfrak K^{(d+1)}_n(x,y)
-
\mathfrak K^{(d)}_{n+1}(x,y)
\right].
\]
Indeed, the additional diagonal term is
\[
(-1)^{x-y}
\left(\frac{x}{y}-1\right)\delta_{xy}=0.
\]
Hence the rank-one correction for the dual kernels has the same
absolute value as for the original kernels, and the proof of
Proposition~\ref{As_prop_neq0} applies without further changes.
\end{remark}

\section{Applications}
\label{sec:examples}
The purpose of this section is to apply Propositions~\ref{As_prop_0} and~\ref{As_prop_neq0} to two classical symmetric ensembles. We first treat the modified Krawtchouk kernel arising from the determinantal point process associated with skew $(\Sp_{2n},\Sp_{2k})$ Howe duality, as described in Section~\ref{sec:determ-point-proc}. The resulting kernel limits determine the local fluctuations of the corresponding random Young diagrams. We then apply the same transfer mechanism to the symmetric Hahn ensemble.

\subsection{Krawtchouk polynomials}
In this subsection we illustrate  Propositions ~\ref{As_prop_0}, \ref{As_prop_neq0} for the Krawtchouk polynomials. For the standard Krawtchouk weight we take the binomial distribution with parameter \(p=\tfrac12\), symmetric relative to $x=0$,
\begin{equation}\label{eq:krawtchouk_weight_centered}
w_{\mathrm{Kr}}^{(0)}(x)
=
\binom{2N}{N+x}\,2^{-2N},
\qquad x\in\mathfrak X=\{-N, \dots,N\}.
\end{equation}
Equivalently, if \(a_x=N+x\in\{0,1,\dots,2N\}\), then \(w_{\mathrm{Kr}}^{(0)}(x)\) is the usual Krawtchouk weight
\[
W^{\mathrm{Kr}}(a_x)=\binom{2N}{a_x}2^{-2N}
\]
written in symmetric coordinates. Since the weight \eqref{eq:krawtchouk_weight_centered} is even, the corresponding monic orthogonal polynomials have definite parity: \(P_{2\ell}(x)\) are even and \(P_{2\ell+1}(x)\) are odd.


The corresponding monic Krawtchouk polynomials can be written in hypergeometric form as (see \cite{Koekoek})
\begin{equation}\label{eq:krawtchouk_hypergeometric}
\widetilde K_m(x)
=
(-1)^mp^mm!\binom{2N}{m}\,
{}_2F_1\!\left(
-m,\,-N-x;\,-2N;\,\frac{1}{p}
\right),
\qquad p=\frac12.
\end{equation}
The associated orthogonality relation is
\begin{equation}\label{eq:krawtchouk_orthogonality}
\sum_{x\in\mathfrak X}
\widetilde K_\ell\!\left(x\right)
\widetilde K_m\!\left(x\right)
w_{\mathrm{Kr}}^{(0)}(x)
=
h_m^2\,\delta_{\ell m}, \quad h_m^2=(m!)^2\binom{2N}{m}2^{-2m},
\end{equation}
and the corresponding three-term recurrence has the form
\begin{equation}\label{eq:krawtchouk_recurrence}
x\,\widetilde K_m(x)
=
\widetilde K_{m+1}(x)+\beta_m\,\widetilde K_{m-1}(x),
\qquad
\beta_m=\frac{m(2N-m+1)}{4}.
\end{equation}
We denote by $\mathbb{K}_{n,\mathrm{Kr}}$ the Christoffel--Darboux kernel on $\mathfrak{X}$ and by \(\mathfrak K^{(0)}_{n,\mathrm{Kr}}(x,y)\) the Christoffel--Darboux kernel on the quadratic lattice, associated with \(w_{\mathrm{Kr}}^{(0)}(x)\).

Now we apply the iterated Christoffel transformation and define
\begin{equation}\label{eq:krawtchouk_christoffel}
w_{\mathrm{Kr}}^{(d)}(x)=x^{2d}w_{\mathrm{Kr}}^{(0)}(x).
\end{equation}
We denote by \(\mathfrak K^{(d)}_{n,\mathrm{Kr}}(x,y)\) the corresponding Christoffel--Darboux kernel on the quadratic lattice.

The transition from $w_{\mathrm{Kr}}^{(d)}(x)$ to
$w_{\mathrm{Kr}}^{(d+1)}(x)$ is governed by
Proposition~\ref{Main_Lemma}. The limiting regimes of
$\mathfrak K^{(d)}_{n,\mathrm{Kr}}(x,y)$ then follow from Propositions~\ref{As_prop_0}, \ref{As_prop_neq0} and the asymptotics of the ordinary Krawtchouk kernel,
as we show in this subsection.

We use the standard terminology for the limiting particle density: a band (bulk) is a region where the density lies strictly between $0$ and $1$, a void is a region where it is $0$, and a saturated region is one where it is $1$.

It is shown in~\cite{johansson2002non} that, in the
regime
\[
\frac nN\longrightarrow\gamma\in(0,1),
\]
the limiting band for the Krawtchouk kernel
\(\mathbb K_{2n,\mathrm{Kr}}\) consists of the macroscopic points
\(x/N\to u\) with
\[
u\in(-u_\ast(\gamma),u_\ast(\gamma)),
\qquad
u_\ast(\gamma)=2\sqrt{\gamma(1-\gamma)}.
\]
The regions outside the band are void when $\gamma<\frac12$ and saturated when $\gamma>\frac12$.
The local asymptotic behavior in the interior of the band is governed by the discrete sine kernel. If
\(\gamma\neq\frac12\), the two endpoints are soft edges, and the corresponding local asymptotics on the \(N^{1/3}\)-scale is governed by the Airy kernel. At the critical value \(\gamma=\frac12\), one has
\(u_\ast(\gamma)=1\), so the band reaches the endpoint of the lattice and the discrete Hermite regime arises; see Proposition~\ref{prop:kr_discr-Hermite}.

Propositions~\ref{prop:kr_bulk},\ref{prop:Airy_Kr},\ref{prop:kr_discr-Hermite} below show that, for every fixed $d\in\ZZ_{\ge0}$, the kernel $\mathfrak K^{(d)}_{n,\mathrm{Kr}}(x,y)$ has the same local asymptotic behavior away from the origin, in the appropriate particle or dual formulation (the dual kernel is introduced in Definition~\ref{def:dual}). At the origin, however, the reflected contribution survives; Proposition~\ref{prop:kr_origin} gives the resulting limits for $d=0$ and $d=1$.

Let $\mathbb K_{\sin,\; \phi}(A,B)$, $\phi\in [0,\pi]$,  denote the discrete sine kernel,
\begin{align*}
\mathbb K_{\sin,\; \phi}(A,B)= \begin{cases}
\dfrac{\sin\bigl(\phi(A-B)\bigr)}
      {\pi(A-B)},
& A\neq B,\\[2mm]
\dfrac{\phi}{\pi},
& A=B,
\end{cases}
\qquad A,B\in\ZZ.
\end{align*}
\begin{prop}[Bulk regime]\label{prop:kr_bulk}
Fix $\gamma\in(0,1)$ and set
\[
n=\lfloor\gamma N\rfloor,
\qquad
u_\ast(\gamma)=2\sqrt{\gamma(1-\gamma)}.
\]
Let $0<|u|<u_\ast(\gamma)$. Then, for every fixed \(d\in\ZZ_{\geq0}\) and all \(A,B\in\ZZ\),
\[
\lim_{N\to\infty}
\mathfrak K_{n,\mathrm{Kr}}^{(d)}
\bigl(\lfloor uN\rfloor+A,\lfloor uN\rfloor+B\bigr)
=
\mathbb K_{\sin,\;\phi_\gamma(u)}(A,B),
\]
where
\[
\phi_\gamma(u)
=
\arccos\left(
\frac{1-2\gamma}{\sqrt{1-u^2}}
\right).
\]
\end{prop}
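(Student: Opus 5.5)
The proof will be a direct application of Proposition~\ref{As_prop_neq0} with $\mathbb X=\ZZ$, $D_N=1$, $x_N(A)=\lfloor uN\rfloor+A$ and $v=u\neq0$. We have $x_N(A)/N\to u$, $n\to\infty$ and $N-n\to\infty$ since $\gamma\in(0,1)$. The only thing to verify is the hypothesis on the ordinary symmetric Krawtchouk kernel. For every fixed $j\in\ZZ$,
\[
\lim_{N\to\infty}\mathbb K_{2n+j,\mathrm{Kr}}\bigl(\lfloor uN\rfloor+A,\lfloor uN\rfloor+B\bigr)=\mathbb K_{\sin,\phi_\gamma(u)}(A,B).
\]
Once this holds, the proposition gives the conclusion for every fixed $d\ge0$ simultaneously. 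Here the kernel $\mathfrak K^{(d)}_{n,\mathrm{Kr}}$ is evaluated at positive or negative points: for $u<0$ one uses the symmetry $\mathfrak K^{(d)}(-x,-y)=\mathfrak K^{(d)}(x,y)$, or simply applies the proposition with $v<0$.

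The bulk limit of the Krawtchouk kernel is the known result I would invoke. In unshifted coordinates $a=N+x\in\{0,\dots,2N\}$, the weight is binomial on $2N+1$ points with $p=1/2$, and the rank is $m=2n+j=2\gamma N+O(1)$. Relative to the lattice length $M=2N$, this gives the ratio $m/M\to\gamma$. The results of Johansson~\cite{johansson2002non}, or equivalently Baik--Kriecherbauer--McLaughlin--Miller~\cite{baik2007discrete} and Borodin--Olshanski~\cite{borodin2007asymptotics}, then give the discrete sine kernel in the band. The density is expressed through $\arccos$ of an explicit function of the macroscopic position. By Remark~\ref{rem:fixed_shift}, a bounded shift $j$ only changes the bounded parameter $\lambda$ in $k=cN+\lambda$, so the limit is independent of $j$.

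The main computational step, and the one I expect to be most delicate, is matching the density: checking that the Krawtchouk bulk density at position $a/M=(1+u)/2$ with rank ratio $\gamma$ equals $\phi_\gamma(u)/\pi$ with $\phi_\gamma(u)=\arccos\bigl((1-2\gamma)/\sqrt{1-u^2}\bigr)$.

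I would do this through the equilibrium measure. Alternatively, and more quickly, I would use the WKB/turning-point form of the recurrence~\eqref{eq:krawtchouk_recurrence}. With $\sqrt{\beta_m}\approx N\sqrt{\gamma'(2-\gamma')}/2$ at $m=\gamma' N$, the local oscillation angle $\theta$ at position $uN$ satisfies $u=\sqrt{\gamma'(2-\gamma')}\cos\theta$. Integrating over levels $\gamma'\in[0,2\gamma]$ then gives the density. The check that the result reduces to the stated $\arccos$ uses the substitution $s=1-\gamma'$. Consistency checks are that the density vanishes exactly at $|u|=u_\ast(\gamma)$, since $(1-2\gamma)^2=1-u_\ast^2$, and equals $1/2$ at $\gamma=1/2$.

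A minor additional point is the alignment of the sign convention and the factor $\sqrt{w}$ (orthonormal functions versus polynomials). The discrete sine kernel is unaffected up to the conjugation $(-1)^{A-B}$ and the gauge factors, which do not change correlation functions. The paper's statement with pure $\mathbb K_{\sin}$, which is even in $A-B$, should follow because the kernel is symmetric and the limit is real.
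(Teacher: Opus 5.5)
Your proposal is correct and follows the paper's proof: obtain the ordinary Krawtchouk bulk limit, stable under fixed rank shifts, from the literature, then apply Proposition~\ref{As_prop_neq0} with $D_N=1$, $\mathbb X=\ZZ$, $v=u$. The only difference is that the paper cites Johansson only for $\gamma\le\frac12$ and treats $\gamma>\frac12$ via the particle--hole self-duality of the $p=\frac12$ Krawtchouk ensemble together with the identity $(-1)^{A-B}\bigl(\delta_{AB}-\mathbb K_{\sin,\pi-\phi}(A,B)\bigr)=\mathbb K_{\sin,\phi}(A,B)$; this also settles the sign question you raise at the end, which symmetry and reality of the kernel alone would not settle, since they do not exclude an extra factor $(-1)^{A-B}$.
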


\begin{proof}
Set
\[
X_N=\lfloor uN\rfloor,
\qquad
x_N(A)=X_N+A,
\qquad
A\in\ZZ.
\]
Since \(n=\lfloor\gamma N\rfloor\) with \(\gamma\in(0,1)\), we have
\(n\to\infty\) and \(N-n\to\infty\). Moreover,
\[
\frac{x_N(A)}{N}\longrightarrow u
\]
for every fixed \(A\in\ZZ\). Since
\(0<|u|<u_\ast(\gamma)\leq1\), the points \(x_N(A)\) belong to
\(\mathfrak X=\{-N,\ldots,N\}\) for all sufficiently large \(N\),
and the macroscopic localization condition in
Proposition~\ref{As_prop_neq0} is satisfied with
\[
D_N=1,
\qquad
v=u,
\qquad
\mathbb X=\ZZ.
\]

Suppose first that \(\gamma\leq\frac12\). 
For $A\neq B$, Lemma~2.8 of~\cite{johansson2002non}, after passing from the standard Krawtchouk coordinate to the centered coordinate used here, gives the required sine-kernel limit. For $A=B$, one uses the diagonal Christoffel--Darboux formula obtained from \cite[formula~(2.7)]{johansson2002non} by l'Hôpital's rule; the same saddle-point calculation as in the proof of Lemma~2.8 then gives
\[
\mathbb K_{2n+i,\mathrm{Kr}}
\bigl(x_N(A),x_N(A)\bigr)
\longrightarrow
\frac{\phi_\gamma(u)}{\pi}.
\]
\
Together with the fixed-rank-shift stability discussed in Remark~\ref{rem:fixed_shift}, this yields, for every fixed
\(i\in\ZZ\) and \(A,B\in\ZZ\),
\[
\lim_{N\to\infty}
\mathbb K_{2n+i,\mathrm{Kr}}
\bigl(x_N(A),x_N(B)\bigr)
=
\mathbb K_{\sin,\phi_\gamma(u)}(A,B),
\]
where
\[
\phi_\gamma(u)
=
\arccos\left(
\frac{1-2\gamma}{\sqrt{1-u^2}}
\right).
\]
Suppose now that $\gamma>\frac12$. Consider the signed complementary kernel
\[
\widehat{\mathbb K}_{2n+i,\mathrm{Kr}}(x,y)
=
(-1)^{x-y}
\left(
\delta_{xy}-\mathbb K_{2n+i,\mathrm{Kr}}(x,y)
\right).
\]
Section~3.2 of~\cite{baik2007discrete} identifies the hole ensemble of a discrete orthogonal polynomial ensemble with the ensemble associated with the dual weight defined in formula~(1.46). For the Krawtchouk family, Section~2.4.1 of~\cite{baik2007discrete} shows that this duality exchanges $p$ and $1-p$. Hence, for $p=\frac12$, the hole ensemble is again the same symmetric Krawtchouk ensemble, up to an irrelevant overall normalization of the weight. Propositions~7.2 and~7.3 of~\cite{baik2007discrete} identify its reproducing kernel, in the centered coordinates used here, with the signed complementary kernel above.

The rank of the hole ensemble is
\[
2N+1-(2n+i)
=
2(N-n)+(1-i),
\]
so its limiting particle ratio is
\(1-\gamma<\frac12\), with only a fixed shift of the projection rank.
Hence the preceding asymptotics give
\[
\widehat{\mathbb K}_{2n+i,\mathrm{Kr}}
\bigl(x_N(A),x_N(B)\bigr)
\longrightarrow
\mathbb K_{\sin,\phi_{1-\gamma}(u)}(A,B).
\]

Since $\phi_\gamma(u)=\pi-\phi_{1-\gamma}(u)$ and
\[
(-1)^{A-B}
\left(
\delta_{AB} -
\mathbb K_{\sin,\phi_{1-\gamma}(u)}(A,B)
\right) =
\mathbb K_{\sin,\phi_\gamma(u)}(A,B),
\]
we obtain the same limit for
\(\mathbb K_{2n+i,\mathrm{Kr}}\).

Thus, for every fixed \(i\in\ZZ\) and \(A,B\in\ZZ\),
\[
\mathbb K_{2n+i,\mathrm{Kr}}
\bigl(x_N(A),x_N(B)\bigr)
\longrightarrow
\mathbb K_{\sin,\phi_\gamma(u)}(A,B).
\]
All the assumptions of Proposition~\ref{As_prop_neq0} are therefore
satisfied. Applying that proposition gives, for every fixed
\(d\in\ZZ_{\geq0}\),
\[
\lim_{N\to\infty}
\mathfrak K_{n,\mathrm{Kr}}^{(d)}
\bigl(\lfloor uN\rfloor+A,\lfloor uN\rfloor+B\bigr)
=
\mathbb K_{\sin,\phi_\gamma(u)}(A,B),
\]
as required.
\end{proof}

Let $\mathbb{K}_{\mathrm{Ai}}(x,y)$  be the Airy kernel
\begin{align*}
\mathbb{K}_{\mathrm{Ai}}(x,y)=\int_{0}^{\infty}\operatorname{Ai}(x+t)\operatorname{Ai}(y+t)\,dt,
\end{align*}
where $\operatorname{Ai}(x)$ is the Airy function of the first kind.

In the next propositions we focus only on the right edge, the left edge can be recovered directly from the symmetry of the ensemble.
\begin{prop}[Soft-edge regime]\label{prop:Airy_Kr}
Fix
$
\gamma\in
\left(0,\frac12\right)\cup
\left(\frac12,1\right),
$
and set
$
n=\lfloor\gamma N\rfloor.
$
Define
\[
u_\ast(\gamma)=2\sqrt{\gamma(1-\gamma)},
\qquad
\xi_N=Nu_\ast(\gamma),
\]
and
\[
C_\gamma^{-1}
=
2^{2/3}
\gamma^{1/6}(1-\gamma)^{1/6}
|1-2\gamma|^{-2/3},
\qquad
D_N=C_\gamma N^{1/3}.
\]
For \(A\in\RR\), set
$
x_N(A)
=
\left\lfloor
\xi_N+D_NA
\right\rfloor.
$

If \(\gamma<\frac12\), then, for every fixed
\(d\in\ZZ_{\geq0}\) and all \(A,B\in\RR\),
\[
\lim_{N\to\infty}
D_N
\mathfrak K^{(d)}_{n,\mathrm{Kr}}
\bigl(x_N(A),x_N(B)\bigr)
=
\mathbb K_{\mathrm{Ai}}(A,B).
\]

If \(\gamma>\frac12\), then, for every fixed
\(d\in\ZZ_{\geq0}\) and all \(A,B\in\RR\),
\[
\lim_{N\to\infty}
D_N
\Khat^{(d)}_{n,\mathrm{Kr}}
\bigl(x_N(A),x_N(B)\bigr)
=
\mathbb K_{\mathrm{Ai}}(A,B).
\]
\end{prop}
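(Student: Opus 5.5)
The plan mirrors the proof of Proposition~\ref{prop:kr_bulk}. We reduce everything to Proposition~\ref{As_prop_neq0} (for $\gamma<\frac12$) and to its dual version in Remark~\ref{dual_kernels} (for $\gamma>\frac12$), with $\mathbb X=\RR$, $v=u_\ast(\gamma)$ and $D_N=C_\gamma N^{1/3}$.

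\emph{Checking the localization hypotheses.} Since $n=\lfloor\gamma N\rfloor$ with $\gamma\in(0,1)$, we have $n\to\infty$ and $N-n\to\infty$. For fixed $A$, $x_N(A)/N\to u_\ast(\gamma)$. This limit lies in $(0,1)$ because $\gamma\neq\frac12$, so $v\neq0$, $v>0$, and $x_N(A)\in\mathfrak X$ for large $N$. Moreover $D_N=o(N)$, so the maps $x_N$ satisfy the macroscopic localization property required in Proposition~\ref{As_prop_neq0}.

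\emph{The ordinary Krawtchouk kernel, $\gamma<\frac12$.} The main input is the Airy limit
\[
\lim_{N\to\infty} D_N\,\mathbb K_{2n+j,\mathrm{Kr}}\bigl(x_N(A),x_N(B)\bigr)=\mathbb K_{\mathrm{Ai}}(A,B)
\]
for every fixed $j\in\ZZ$. I would take this from the soft-edge asymptotics of the Krawtchouk ensemble in \cite{johansson2002non} (the Airy limit at the edge of the band), or equivalently from \cite{baik2007discrete}, where the edge is void precisely when the particle density $\gamma<\frac12$. Two translations are needed. First, the standard coordinate $a=N+x$ on $\{0,\dots,2N\}$ has $2N$ trials and $2n$ particles, so the particle ratio $2n/2N\to\gamma$ matches. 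Second, the edge location and the $N^{1/3}$ scaling constant must come out as $\xi_N=Nu_\ast(\gamma)$ and $C_\gamma$. Verifying $C_\gamma$ amounts to computing the cubic coefficient of the phase function at the coalescing saddle point, or equivalently the square-root vanishing rate of the equilibrium density $\frac1\pi\arccos\bigl((1-2\gamma)/\sqrt{1-u^2}\bigr)$ at $u_\ast$. Replacing $2n$ by $2n+j$ changes the effective $\gamma$ by $O(1/N)$, so the edge moves by $O(1)=o(N^{1/3})$; this gives the fixed-shift stability, as argued in Remark~\ref{rem:fixed_shift}. Proposition~\ref{As_prop_neq0} then yields the claim for every fixed $d$.

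\emph{The dual kernel, $\gamma>\frac12$.} I pass to the hole ensemble exactly as in the proof of Proposition~\ref{prop:kr_bulk}. For $p=\frac12$, the signed complementary kernel $\widehat{\mathbb K}_{2n+j,\mathrm{Kr}}$ is the Christoffel--Darboux kernel of the same symmetric Krawtchouk ensemble with rank $2(N-n)+1-j$. Its particle ratio is $1-\gamma<\frac12$. Since $u_\ast(1-\gamma)=u_\ast(\gamma)$ and $C_{1-\gamma}=C_\gamma$, the previous step gives $D_N\widehat{\mathbb K}_{2n+j,\mathrm{Kr}}(x_N(A),x_N(B))\to\mathbb K_{\mathrm{Ai}}(A,B)$ for all fixed $j$. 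Because $v>0$, Remark~\ref{dual_kernels} transfers this limit to $\Khat^{(d)}_{n,\mathrm{Kr}}$ for every fixed $d$.

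\emph{Main obstacle.} The delicate step is the precise identification of the Airy limit with the stated centering and scale constant $C_\gamma$, including the claim that it is uniform under fixed rank shifts. I would handle this by rederiving the edge from the Plancherel--Rotach-type saddle analysis of the Krawtchouk contour integral, rather than relying only on the citation.
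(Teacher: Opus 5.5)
Your proposal is correct and follows the paper's proof essentially step for step. You check the localization and fixed-rank-shift hypotheses of Proposition~\ref{As_prop_neq0} with $v=u_\ast(\gamma)$ and $D_N=C_\gamma N^{1/3}$ and import the ordinary Krawtchouk Airy limit for $\gamma<\frac12$; for $\gamma>\frac12$ you use that the $p=\frac12$ hole ensemble is again a symmetric Krawtchouk ensemble of rank $2(N-n)+1-j$, with $u_\ast(1-\gamma)=u_\ast(\gamma)$ and $C_{1-\gamma}=C_\gamma$, and then apply Remark~\ref{dual_kernels}. The only difference is where the $\gamma<\frac12$ input comes from: the paper takes it from \cite{betea2024}, whose kernel is gauge-conjugated and so needs the extra check $z_\gamma^{x-y}\sqrt{w_{\mathrm{Kr}}^{(0)}(y)/w_{\mathrm{Kr}}^{(0)}(x)}\to1$ (Proposition~\ref{As_prop_neq0} uses pointwise values of the symmetric orthonormal kernel), whereas a result stated for the symmetric kernel, as in \cite{baik2007discrete}, makes that step unnecessary.
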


\begin{proof}
We verify the assumptions of
Proposition~\ref{As_prop_neq0}.

Since $n=\lfloor\gamma N\rfloor$, we have $n\to\infty$ and $N-n\to\infty$. Set $\mathbb X=\RR$ and
$v=u_\ast(\gamma)$. Since $\gamma\neq\frac12$,
$u_\ast(\gamma)\in(0,1)$. Moreover, $D_N=o(N)$, and hence, for every fixed $A\in\RR$, $x_N(A)/N\to u_\ast(\gamma)$. In particular, $x_N(A)\in\mathfrak X$ for all sufficiently large $N$.

Since \(n=\lfloor\gamma N\rfloor\), for every fixed \(i\in\ZZ\),
\[
2n+i=2\gamma N+O(1).
\]
Thus a fixed shift of the rank changes the corresponding degree-to-size ratio only by $O(N^{-1})$. Since $\gamma\neq\frac12$,
the quantities $u_\ast(\gamma)$ and $C_\gamma$ are smooth in $\gamma$. Hence the resulting displacement of the soft-edge center is $O(1)$, while the Airy scale changes by a factor $1+o(1)$; these changes are negligible on the $N^{1/3}$-scale.

Suppose first that \(\gamma<\frac12\). The right band edge is then
adjacent to a void.  Set
\[
z_\gamma
=
\frac{\sqrt{1-\gamma}-\sqrt{\gamma}}
     {\sqrt{1-\gamma}+\sqrt{\gamma}}
\in(0,1).
\]
After specializing \cite[Theorem~1.5 and
formulas~(5.11), (5.13)]{betea2024} to the Krawtchouk ensemble,
replacing the particle number used there by \(2n\), and passing to
the centered lattice, one obtains
\[
\lim_{N\to\infty}
D_N
z_\gamma^{\,x_N(A)-x_N(B)}
\sqrt{
\frac{
w_{\mathrm{Kr}}^{(0)}(x_N(B))
}{
w_{\mathrm{Kr}}^{(0)}(x_N(A))
}
}
\,
\mathbb K_{2n,\mathrm{Kr}}
\bigl(x_N(A),x_N(B)\bigr)
=
\mathbb K_{\mathrm{Ai}}(A,B).
\]
The bounded displacement between the exact finite-\(N\) edge and
\(\xi_N\), caused by \(n-\gamma N=O(1)\), is negligible on the
\(N^{1/3}\)-scale.

The additional conjugating factor tends to one. Indeed,
\[
\frac{w_{\mathrm{Kr}}^{(0)}(x+1)}
     {w_{\mathrm{Kr}}^{(0)}(x)}
=
\frac{N-x}{N+x+1}.
\]
Uniformly for \(x=\xi_N+O(N^{1/3})\),
\[
\frac{N-x}{N+x+1}
=
\frac{1-u_\ast(\gamma)}
     {1+u_\ast(\gamma)}
\left(1+O(N^{-2/3})\right)
=
z_\gamma^2
\left(1+O(N^{-2/3})\right).
\]
Since
\[
|x_N(A)-x_N(B)|=O(N^{1/3}),
\]
it follows that
\[
\frac{
w_{\mathrm{Kr}}^{(0)}(x_N(A))
}{
w_{\mathrm{Kr}}^{(0)}(x_N(B))
}
=
z_\gamma^{\,2(x_N(A)-x_N(B))}
\bigl(1+o(1)\bigr).
\]
Consequently,
\[
z_\gamma^{\,x_N(A)-x_N(B)}
\sqrt{
\frac{
w_{\mathrm{Kr}}^{(0)}(x_N(B))
}{
w_{\mathrm{Kr}}^{(0)}(x_N(A))
}
}
=
1+o(1).
\]
The same argument applies after replacing $2n$ by $2n+i$ for any
fixed $i\in\ZZ$; see also Remark~\ref{rem:fixed_shift}. Therefore,
for all fixed $i\in\ZZ$ and $A,B\in\RR$,
\[
\lim_{N\to\infty}
D_N
\mathbb K_{2n+i,\mathrm{Kr}}
\bigl(x_N(A),x_N(B)\bigr)
=
\mathbb K_{\mathrm{Ai}}(A,B).
\]
Thus all assumptions of Proposition~\ref{As_prop_neq0} are satisfied
with $D_N=C_\gamma N^{1/3}$, $v=u_\ast(\gamma)$, and
$\mathbb X=\RR$.

Hence, for every fixed \(d\in\ZZ_{\geq0}\),
\[
\lim_{N\to\infty}
D_N
\mathfrak K^{(d)}_{n,\mathrm{Kr}}
\bigl(x_N(A),x_N(B)\bigr)
=
\mathbb K_{\mathrm{Ai}}(A,B).
\]

Suppose now that \(\gamma>\frac12\). In this case the right band edge is adjacent to a saturated region. By the particle--hole identification established in the proof of Proposition~\ref{prop:kr_bulk}, the signed complementary kernel
\[
\widehat{\mathbb K}_{2n+i,\mathrm{Kr}}(x,y)
=
(-1)^{x-y}
\left(
\delta_{xy}
-
\mathbb K_{2n+i,\mathrm{Kr}}(x,y)
\right)
\]
is the correlation kernel of the hole ensemble. Its rank is $2N+1-(2n+i)=2(N-n)+(1-i)$, so its limiting particle ratio is $1-\gamma<\frac12$, with only a fixed shift of the projection rank. Since
\[
u_\ast(1-\gamma)=u_\ast(\gamma),
\qquad
C_{1-\gamma}=C_\gamma,
\]
the void-edge Airy asymptotics
\cite[Lemma~7.16]{baik2007discrete} 
together with Remark~\ref{rem:fixed_shift}, give, for all fixed
$i\in\ZZ$ and $A,B\in\RR$,
\[
\lim_{N\to\infty}
D_N
\widehat{\mathbb K}_{2n+i,\mathrm{Kr}}
\bigl(x_N(A),x_N(B)\bigr)
=
\mathbb K_{\mathrm{Ai}}(A,B).
\]

Finally, Remark~\ref{dual_kernels} shows that
Proposition~\ref{As_prop_neq0} applies equally to the dual kernels. Hence, for every fixed \(d\in\ZZ_{\geq0}\),
\[
\lim_{N\to\infty}
D_N
\Khat^{(d)}_{n,\mathrm{Kr}}
\bigl(x_N(A),x_N(B)\bigr)
=
\mathbb K_{\mathrm{Ai}}(A,B).
\]
This completes the proof.
\end{proof}

Let $\{H_n\}_{n\geq 0}$ be the Hermite polynomials in the convention of~\cite[\S9.15]{Koekoek}. That is, $H_n(x)$ is the polynomial of degree $n$ with leading coefficient $2^n$ and  
$$
\int_{-\infty}^{+\infty} H_x(t)H_y(t)e^{-t^2}dt =
\sqrt{\pi}\, 2^{x}x!\,\delta_{xy}, \qquad x,y\in\ZZ_{\ge0}.
$$
Let $\mathbb K_{\mathrm{DHe^{+}}}^r(x,y)$, $r\in\RR$,  be the discrete Hermite kernel
(see \cite{Borodin_2017}):
\begin{align*}
\mathbb K_{\mathrm{DHe^{+}}}^r(x,y)=(\pi 2^{x+y}x!y!)^{-1/2}\int_{r}^{+\infty} H_x(t)H_y(t)e^{-t^2}dt, \quad x,y\in\ZZ_{\ge0}.
\end{align*}
\begin{prop}[Discrete Hermite regime]\label{prop:kr_discr-Hermite}
 Fix $r\in \mathbb{R}$ and suppose that
\begin{align*}
	 \frac{n}{N}=\frac{1}{2}-\frac{r}{2\sqrt{N}}+o(N^{-1/2}).
	 \end{align*}
Then for any fixed $d\in\ZZ_{\ge0}$ and
any $x,y\in\ZZ_{\ge0}$ we have
\[
\lim_{N\to\infty}
\mathfrak K_{n,\mathrm{Kr}}^{(d)}
\bigl(N-x,N-y\bigr)
=
\mathbb K_{\mathrm{DHe^{+}}}^{r}(x,y).
\]
\end{prop}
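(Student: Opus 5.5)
Since $N-x$ and $N-y$ stay at macroscopic position $v=1\neq0$, the plan is to reduce the statement to Proposition~\ref{As_prop_neq0}. We take $D_N=1$, $\mathbb X=\ZZ_{\geq0}$ and $x_N(A)=N-A$, so that $x_N(A)/N\to1$ for every fixed $A$. The conditions $n\to\infty$ and $N-n\to\infty$ clearly hold, since $n\sim N/2$. Everything then reduces to one input: for every fixed $j\in\ZZ$ and all $x,y\in\ZZ_{\geq0}$,
\[
\lim_{N\to\infty}\mathbb K_{2n+j,\mathrm{Kr}}(N-x,N-y)=\mathbb K^{r}_{\mathrm{DHe^{+}}}(x,y).
\]
Once this is established, Proposition~\ref{As_prop_neq0} transfers the limit to $\mathfrak K^{(d)}_{n,\mathrm{Kr}}$ for every fixed $d$.

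To obtain the input, we first pass to the standard Krawtchouk coordinate $a=N+X\in\{0,\dots,2N\}$. Here $X=N-x$ corresponds to $a=2N-x$, the right end of the lattice. Using the symmetry $a\mapsto 2N-a$ of the $p=\tfrac12$ weight, under which $p_m(-X)=(-1)^m p_m(X)$ and hence $\mathbb K_m(-X,-Y)=\mathbb K_m(X,Y)$, we move to the left end $a=x$. The rank is $M=2n+j=N-r\sqrt N+o(\sqrt N)$ on a lattice of $2N+1$ points. In terms of the total size $L=2N$ this reads $M=L/2-(r/\sqrt2)\sqrt{L}+\dots$, which is exactly the critical regime of Borodin--Olshanski~\cite{borodin2007asymptotics}, see also~\cite{Borodin_2017}.

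The limit itself comes from the spectral-projection (or duality) representation of the Krawtchouk kernel. The Krawtchouk functions $p_m(a)$, viewed as functions of $m$ for fixed $a$, are again Krawtchouk functions, by the self-duality $K_m(a)=K_a(m)$. Thus $\mathbb K_M(a,b)=\sum_{m<M}p_m(a)p_m(b)$ is a sum over $m$ of the dual orthonormal functions in the variable $m$, evaluated at the fixed degrees $a,b$. As $N\to\infty$, the binomial weight in the variable $m$ near $m\approx N+t\sqrt{N}$ converges, after scaling, to the Gaussian. By the classical de Moivre--Laplace limit of Krawtchouk polynomials to Hermite polynomials, the normalized dual functions of fixed degree $a$ converge to $(\sqrt\pi 2^a a!)^{-1/2}H_a(t)e^{-t^2/2}$. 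The sum over $m<M$ becomes a Riemann sum over $t<-r$ with mesh $N^{-1/2}$, giving $\int_{-\infty}^{-r}$. By the parity $H_a(-t)=(-1)^aH_a(t)$, this equals $(-1)^{x+y}\int_r^\infty$. The sign $(-1)^{x+y}$ is absorbed by the reflection step, or, if it survives, is a diagonal conjugation that does not change the correlation functions. In the latter case we would insert the signs consistently through the reflection so that exactly $\mathbb K^r_{\mathrm{DHe^{+}}}$ appears.

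For the limit to hold, the Riemann sum must converge. We need dominated convergence, i.e. uniform Gaussian tail bounds on the dual functions for $|m-N|\gg\sqrt N$, plus control of the complementary sum $m\geq M$ via $\sum_{\text{all }m}=\delta_{ab}$. We also need to verify that a fixed shift $j$ moves the cutoff only by $O(N^{-1/2})$ in $t$, hence does not affect the limit; this is the stability discussed in Remark~\ref{rem:fixed_shift}. We expect this tail estimate, together with matching the paper's normalizations and signs against the Hermite convention of~\cite{Koekoek}, to be the main obstacle. As an alternative, one can simply cite the discrete Hermite limit of~\cite{borodin2007asymptotics} and spend the effort on the coordinate translation instead.
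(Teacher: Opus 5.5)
Your reduction is the same as the paper's. Both arguments feed the ordinary-kernel limit $\mathbb K_{2n+j,\mathrm{Kr}}(N-x,N-y)\to\mathbb K^{r}_{\mathrm{DHe^{+}}}(x,y)$, for every fixed $j\in\ZZ$, into Proposition~\ref{As_prop_neq0} with $v=1$ and $D_N=1$. The paper uses $\mathbb X=\ZZ_{\le0}$, $x_N(A)=N+A$; your choice $\ZZ_{\ge0}$, $x_N(A)=N-A$ is equivalent.

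You differ in how that input is obtained. The paper imports it from \cite[Theorem~1.7]{betea2024} with $f\equiv g\equiv1$. Its work is a parameter dictionary: $\widetilde n=2n+i$, $\widetilde k=2N+1-\widetilde n$, $\tau=1/\sqrt2$, $l=x+1$, a diagonal conjugation tending to $1$, and $\mathrm{He}_x$ versus $H_x$. You prove it directly. At $p=\tfrac12$ the matrix $\hat p_m(a)=K_m(a)\sqrt{\binom{2N}{a}\binom{2N}{m}}\,2^{-N}$ is symmetric and orthogonal, so $\mathbb K_M(a,b)=\sum_{m<M}\hat p_a(m)\hat p_b(m)$. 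The Krawtchouk-to-Hermite limit and the local de Moivre--Laplace theorem then turn this into a Riemann sum in $t=(m-N)/\sqrt N$, with cutoff $(M-N)/\sqrt N\to-r$.

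What your route buys:
\begin{itemize}
\item It is self-contained and makes fixed-rank-shift stability trivial.
\item It pins down $r$ without external normalizations. Your suggested fallback of citing Borodin--Olshanski would have to account for the $\sqrt{1-p}$ correction to \cite[Theorem~6.4]{Borodin_2017} noted in Remark~\ref{rem:borodin-olshanski}.
\item The tail estimate you flag as the main obstacle is harmless, using the tool you already mention. Since $\sum_m\hat p_a(m)^2=1=\int h_a^2$ for the normalized Hermite function $h_a$, locally uniform convergence on $|t|\le T$ forces the mass outside $|t|\le T$ to tend to $\int_{|t|>T}h_a^2$. Cauchy--Schwarz then handles the off-diagonal tails.
\end{itemize}
The cost is that it rests on the exact self-duality of Krawtchouk polynomials. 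It therefore does not carry over to, e.g., the Hahn/discrete Laguerre case, where the paper again cites an external theorem.

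One correction to your sign discussion: no factor $(-1)^{x+y}$ survives, and none can be ``absorbed'' or ``inserted''. The limit in \cite{Koekoek} reads $\sqrt{\binom{2N}{a}}\,K_a(N+t\sqrt N)\to(-1)^aH_a(t)/\sqrt{2^aa!}$. The resulting $(-1)^{a+b}$ cancels the one from $\int_{-\infty}^{-r}H_aH_be^{-t^2}\,dt=(-1)^{a+b}\int_r^{\infty}H_aH_be^{-t^2}\,dt$. The reflection contributes no sign either, since $\mathbb K_m(-X,-Y)=\mathbb K_m(X,Y)$.

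This matters because $\sum_m p_m(a)p_m(b)$ does not depend on the sign conventions of the $p_m$, and the proposition is an identity of kernel limits. A leftover diagonal conjugation would give the right correlation functions but would not prove the statement as written.
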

\begin{proof}
By \cite[Theorem~1.7 and Sections~4.4, 5.1]{betea2024},
the specialization $f\equiv g\equiv1$ gives the
$p=\frac12$ Krawtchouk ensemble, whose critical corner limit is
the discrete Hermite kernel.

To avoid confusion with the parameters $n,N$ used in the present
paper, when applying the notation of~\cite{betea2024} we write
$\widetilde n,\widetilde k$ for the parameters denoted there by
$n,k$, respectively. Fix $i\in\ZZ$ and set
\[
\widetilde n=2n+i,
\qquad
\widetilde k=2N+1-(2n+i).
\]
Then $\widetilde n+\widetilde k-1=2N$, so the corresponding
Krawtchouk ensemble in~\cite{betea2024} is precisely the ordinary
Krawtchouk ensemble on $\{0,\ldots,2N\}$ with projection rank
$2n+i$.

Moreover, the assumed scaling
\[
2n=N-r\sqrt N+o(\sqrt N)
\]
gives
\[
\frac{\widetilde k}{\widetilde n}\longrightarrow1,
\qquad
\frac{\widetilde k-\widetilde n}{\sqrt{\widetilde n}}
\longrightarrow2r.
\]
For $f\equiv g\equiv1$, the normalization in
\cite[Theorem~1.7]{betea2024} is $\tau=1/\sqrt2$. 
Define the finite-$N$ parameter by
\[
s_{N,i}
=
\tau\frac{\widetilde k-\widetilde n}
{\sqrt{\widetilde n}}.
\]
Then $s_{N,i}\longrightarrow\sqrt2\,r$ as $N\to\infty$, and the
fixed shift $i$ changes $s_{N,i}$ only by $O(N^{-1/2})$.
Thus the derivation in \cite[Section~4.4]{betea2024} applies unchanged
to this convergent sequence.

For the Krawtchouk site $N-x$, the corresponding uncentered coordinate is $2N-x$, and
\[
2N-x-\widetilde n+\frac12
=
\widetilde k-(x+1)+\frac12,
\qquad
\widetilde k
=
\widetilde n+
\frac{s_{N,i}}{\tau}\sqrt{\widetilde n}.
\]
Thus the indices in \cite[Theorem~1.7]{betea2024} are
$l=x+1$ and $l'=y+1$.

Passing from the correlation kernel used in~\cite{betea2024} to
the symmetric orthonormal Krawtchouk kernel by the same diagonal
conjugation as in the proof of Proposition~\ref{prop:Airy_Kr}, we
note that the corner normalization cancels:
\[
\sqrt{
\frac{w_{\mathrm{Kr}}^{(0)}(N-x)}
     {w_{\mathrm{Kr}}^{(0)}(N-y)}
}
\left(\frac{\tau}{\sqrt{\widetilde n}}\right)^{x-y}
\sqrt{\frac{x!}{y!}}
\longrightarrow1.
\]
Indeed, this follows from
\[
\frac{\binom{2N}{x}}{\binom{2N}{y}}
\sim
(2N)^{x-y}\frac{y!}{x!},
\qquad
\tau=\frac1{\sqrt2},
\qquad
\frac{\widetilde n}{N}\longrightarrow1.
\]
Using the normalization in
\cite[formulas~(4.35)--(4.38)]{betea2024} and
\[
\operatorname{He}_x(s)
=
2^{-x/2}H_x(s/\sqrt2),
\]
we therefore obtain, for every fixed $i\in\ZZ$ and
$x,y\in\ZZ_{\geq0}$,
\[
\lim_{N\to\infty}
\mathbb K_{2n+i,\mathrm{Kr}}(N-x,N-y)
=
\mathbb K_{\mathrm{DHe}^{+}}^r(x,y).
\]

We now apply Proposition~\ref{As_prop_neq0} with 
$X_N=N$, $v=1$, $D_N=1$, $\mathbb X=\ZZ_{\leq0}$, and
\[
\mathbb K_{\lim}(A,B)
=
\mathbb K_{\mathrm{DHe}^{+}}^r(-A,-B),
\qquad
A,B\in\ZZ_{\leq0}.
\]
For every fixed $A\in\ZZ_{\leq0}$, the point $N+A$ belongs to
the Krawtchouk lattice for all sufficiently large $N$, and
$(N+A)/N\to1$. The preceding ordinary-kernel limit holds for
every fixed rank shift $i\in\ZZ$, so all the assumptions of
Proposition~\ref{As_prop_neq0} are satisfied. Hence, for every
fixed $d\in\ZZ_{\geq0}$,
\[
\mathfrak K^{(d)}_{n,\mathrm{Kr}}(N+A,N+B)
\longrightarrow
\mathbb K_{\lim}(A,B).
\]
Taking $A=-x$ and $B=-y$ proves the proposition.
\end{proof}

\begin{remark}\label{rem:borodin-olshanski}
Theorem~6.4 of~Borodin--Olshanski~\cite{Borodin_2017} provides a secondary comparison with the preceding limit. To distinguish their notation from ours, let $N_{\mathrm{BO}}$ denote the number of particles, equivalently the projection rank, in their Krawtchouk ensemble, while $M$ is the upper endpoint of its lattice $\{0,\ldots,M\}$. For fixed $p\in(0,1)$, the critical scaling in that theorem should read
\[
\frac{N_{\mathrm{BO}}-pM}{\sqrt{pM}}
\longrightarrow
-\sqrt{2(1-p)}\,r,
\]
or, equivalently,
\[
\frac{N_{\mathrm{BO}}-pM}{\sqrt{2p(1-p)M}}
\longrightarrow-r.
\]
Indeed, the symmetric off-diagonal coefficient of the Krawtchouk difference operator contains the factor $\sqrt{p(1-p)}$; the displayed calculation in the proof of that theorem omits $\sqrt{1-p}$. At $p=\frac12$, the corrected scaling agrees with the parameter used above.

This correction is separate from the factor $\sqrt2$ in the formulation of Theorem~1.7 of~\cite{betea2024}. For $f\equiv g\equiv1$, the normalization there is $\tau=1/\sqrt2$, so the probabilists' Hermite parameter is $\sqrt2\,r$; the change of variables $u=\sqrt2\,t$ converts it to the physicists' Hermite parameter $r$.
\end{remark}

\begin{remark}
Borodin and Olshanski developed a spectral-projection method for proving local limits of discrete orthogonal polynomial ensembles; see \cite{borodin2007asymptotics,Borodin_2017}. In this approach, the Christoffel--Darboux kernel is realized as a spectral projection of the second-order difference operator $\mathcal D_N$ whose orthonormal polynomial functions $p_m$ are eigenfunctions. After centering and normalizing the operators, strong-resolvent convergence yields convergence of the corresponding spectral projections, provided that the spectral cutoff is not an eigenvalue of the limiting operator; the limiting kernel can then be identified through a generalized Fourier transform. This method was applied to the Krawtchouk to discrete Hermite transition in \cite[Theorem~6.4]{Borodin_2017}
and to the $q$-Krawtchouk bulk limit in \cite[Theorem~2]{NazNikSar22}. The direct spectral-projection interpretation of $\mathfrak K^{(0)}$ and $\mathfrak K^{(1)}$, together with the resulting preservation of local asymptotics away from the origin, is discussed in Appendix~\ref{sec:spectral_first_christoffel}.
\end{remark}

In the following regime, both the main term and the reflected term contribute to the limit, leading to a hard-wall modification of the discrete sine kernel.
\begin{prop}[Hard-wall sine kernel regime]\label{prop:kr_origin}
Suppose that
\[
\frac{n}{N}\longrightarrow\gamma\in(0,1),
\]
and set
\[
\phi_\gamma=\arccos(1-2\gamma).
\]
Then, for every fixed \(A,B\in\ZZ\),
\begin{align*}
\lim_{N\to\infty}
\mathfrak K_{n,\mathrm{Kr}}^{(0)}(A,B)
&=
\eta(A)\eta(B)
\left[
\mathbb K_{\sin,\phi_\gamma}(A,B)
+
\mathbb K_{\sin,\phi_\gamma}(-A,B)
\right],\\
\lim_{N\to\infty}
\mathfrak K_{n,\mathrm{Kr}}^{(1)}(A,B)
&=
\mathbb K_{\sin,\phi_\gamma}(A,B)
-
\mathbb K_{\sin,\phi_\gamma}(-A,B).
\end{align*}
\end{prop}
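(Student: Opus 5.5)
The plan is to reduce the statement to part~(1) of Proposition~\ref{As_prop_0}. That proposition already turns a fixed-site limit of the ordinary symmetric Krawtchouk kernel $\mathbb K_{2n,\mathrm{Kr}}$ into the two displayed formulas, via the identities \eqref{eq:K^0_sum} and \eqref{eq:K^1_diff}. So it suffices to show that, for every fixed $A,B\in\ZZ$,
\[
\lim_{N\to\infty}\mathbb K_{2n,\mathrm{Kr}}(A,B)=\mathbb K_{\sin,\phi_\gamma}(A,B),\qquad \phi_\gamma=\arccos(1-2\gamma).
\]
This is the bulk limit at the macroscopic point $u=0$, and $\phi_\gamma$ coincides with $\phi_\gamma(0)$ from Proposition~\ref{prop:kr_bulk}.

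Step~1 is the case $\gamma\le\frac12$. The point $u=0$ lies strictly inside the band, since $u_\ast(\gamma)>0$. The bulk asymptotics of \cite[Lemma~2.8]{johansson2002non} are uniform on compact subsets of the band, so they hold at $u=0$, and not only at $0<|u|<u_\ast$. I will use them with the sites taken as $0+A$ and $0+B$ in the centered coordinate. The restriction $u\neq0$ in Proposition~\ref{prop:kr_bulk} came only from the transfer through Proposition~\ref{As_prop_neq0}, not from the ordinary-kernel asymptotics. For $A=B$, I will use the diagonal Christoffel--Darboux formula and the same saddle-point computation as in Proposition~\ref{prop:kr_bulk}, which gives the value $\phi_\gamma/\pi$.

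Step~2 is the case $\gamma>\frac12$. I will use the particle--hole duality already established in the proof of Proposition~\ref{prop:kr_bulk}. The signed complementary kernel $\widehat{\mathbb K}_{2n,\mathrm{Kr}}$ is the Krawtchouk kernel with $p=\frac12$ and rank $2(N-n)+1$, so its ratio is $1-\gamma<\frac12$. Step~1 then gives the limit $\mathbb K_{\sin,\phi_{1-\gamma}}(A,B)$, and undoing the duality with the identity
\[
(-1)^{A-B}\bigl(\delta_{AB}-\mathbb K_{\sin,\pi-\phi}(A,B)\bigr)=\mathbb K_{\sin,\phi}(A,B)
\]
yields $\mathbb K_{\sin,\phi_\gamma}(A,B)$. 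The odd shift of the rank, $+1$, is harmless because the limit is stable under fixed rank shifts (Remark~\ref{rem:fixed_shift}). The hypothesis here is only $n/N\to\gamma$ rather than $n=\lfloor\gamma N\rfloor$, but this difference changes the ratio by $o(1)$, and the limit depends continuously on $\gamma$.

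Step~3 is to apply Proposition~\ref{As_prop_0}(1) and conclude. The reflected term $\mathbb K_{\sin,\phi_\gamma}(-A,B)$ does not vanish here, unlike in Proposition~\ref{As_prop_neq0}, because $-A$ and $B$ remain at bounded distance; this is exactly what produces the hard-wall kernel.

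The main obstacle is Step~1: justifying that the cited bulk asymptotics apply exactly at the symmetric center $u=0$, uniformly enough that $n/N\to\gamma$ suffices in place of $n=\lfloor\gamma N\rfloor$. If the uniformity in \cite{johansson2002non} is not explicit, I would instead appeal to the spectral-projection approach of Borodin--Olshanski (Appendix~\ref{sec:spectral_first_christoffel}). Near $x=0$, the centered Krawtchouk difference operator, after normalization, converges in strong resolvent sense to the free discrete Laplacian $\tfrac12(f(x+1)+f(x-1))$. Its spectral projection onto $(1-2\gamma,1]$ is the discrete sine kernel with angle $\arccos(1-2\gamma)$, and the cutoff $1-2\gamma$ lies in the interior of the absolutely continuous spectrum, so it is not an eigenvalue.
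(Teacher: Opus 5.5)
Your proposal is correct and follows essentially the same route as the paper. You obtain $\mathbb K_{2n,\mathrm{Kr}}(A,B)\to\mathbb K_{\sin,\phi_\gamma}(A,B)$ at fixed sites by running the ordinary-kernel bulk argument of Proposition~\ref{prop:kr_bulk} at the interior band point $u=0$ (with the particle--hole duality and $\phi_\gamma=\pi-\phi_{1-\gamma}$ when $\gamma>\frac12$), and then apply part~(1) of Proposition~\ref{As_prop_0}.

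Your spectral-projection fallback is also sound and is the mechanism of Appendix~\ref{sec:spectral_first_christoffel}. It handles both ranges of $\gamma$ at once and uses only $n/N\to\gamma$ directly; by contrast, your ``continuity in $\gamma$'' remark would need the monotone bracketing of nested projections (as in Remark~\ref{rem:Hahn_critical_bulk}) to be made rigorous.
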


\begin{proof}
After shifting the Krawtchouk lattice from $\{0,\ldots,2N\}$ to $\{-N,\ldots,N\}$, 
the ordinary-kernel argument in the proof of Proposition~\ref{prop:kr_bulk} applies also at $u=0$, which is an interior band point for every $\gamma\in(0,1)$. The restriction $u\neq0$ in that proposition is needed only for the subsequent application of Proposition~\ref{As_prop_neq0}. Thus, for $\gamma\leq\frac12$, 
\[
\lim_{N\to\infty}
\mathbb K_{2n,\mathrm{Kr}}(A,B)
=
\mathbb K_{\sin,\phi_\gamma}(A,B),
\qquad
A,B\in\ZZ.
\]

For $\gamma>\frac12$, we again appeal to the proof of Proposition~\ref{prop:kr_bulk}, now to its particle--hole argument, which gives the same conclusion using $\phi_\gamma=\pi-\phi_{1-\gamma}$. Thus part~(1) of
Proposition~\ref{As_prop_0} applies with
\[
\mathbb K_{\lim}=\mathbb K_{\sin,\phi_\gamma},
\]
and gives both asserted limits.
\end{proof}

\begin{remark}
The discrete hard-wall sine kernel has previously appeared in asymptotic representation theory in the work of A.~Borodin and J.~Kuan \cite{borodin2010random}.
It was also studied in connection with XX chains~\cite{Fagotti2011}. A continuous analogue of the discrete hard-wall sine kernel is well-known~\cite{Erhardt2007Dyson,verbaarschot1994,verbaarschot2000} and can be found in chiral ensembles in random matrix theory~\cite[Sections 3.1 and 7.2.7]{forrester2010log}. 
\end{remark}
According to Proposition~\ref{As_prop_neq0}, the kernels $\mathfrak K_{n,\mathrm{Kr}}^{(0)}, \mathfrak K_{n,\mathrm{Kr}}^{(1)}, \mathfrak K_{n,\mathrm{Kr}}^{(2)},\dots$ are close to one another away from a neighborhood of $0$. This is illustrated in Figure~\ref{fig:symplectic}, where we plot the corresponding densities for finite but relatively large values of $N$. 
The densities display large oscillations near the left edge. In this region, Proposition~\ref{prop:kr_origin} shows that the asymptotic behavior is governed by the discrete hard-wall sine kernel, equivalently  the odd-reflection sine kernel, for $\mathfrak K_{n,\mathrm{Kr}}^{(1)}$, and by the modified even-reflection sine kernel 
$$
\eta(A)\eta(B)\left[
\mathbb K_{\sin,\phi}(A,B)+\mathbb K_{\sin,\phi}(-A,B)\right]
$$ 
for $\mathfrak K_{n,\mathrm{Kr}}^{(0)}$. More generally, we expect a finite-rank modification of the even-reflection sine kernel 
$$
\mathbb K_{\sin,\phi}(A,B)+\mathbb K_{\sin,\phi}(-A,B)
$$ 
to arise for the even-indexed kernels $\mathfrak K_{n,\mathrm{Kr}}^{(2d)}$, $d> 0$ and, respectively,  a finite-rank modification of the odd-reflection sine kernel for the odd-indexed kernels $\mathfrak K_{n,\mathrm{Kr}}^{(2d+1)}$, $d>0$.

\begin{figure}[h!tb]
  \centering
  \includegraphics[width=0.49\linewidth]{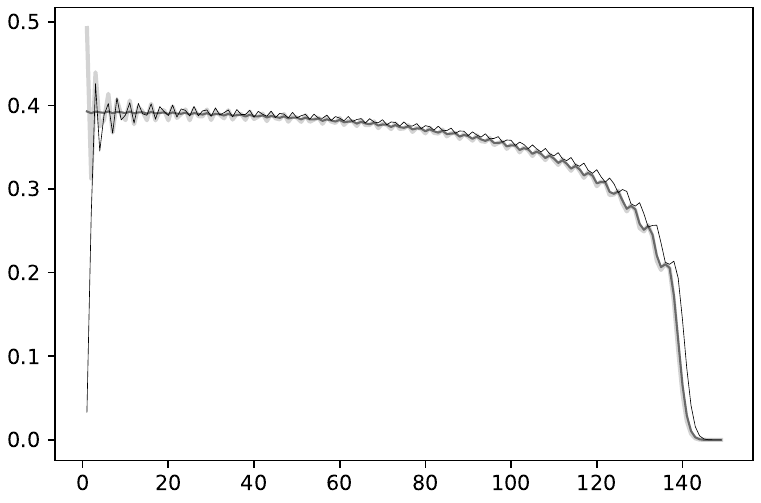}
  \includegraphics[width=0.49\linewidth]{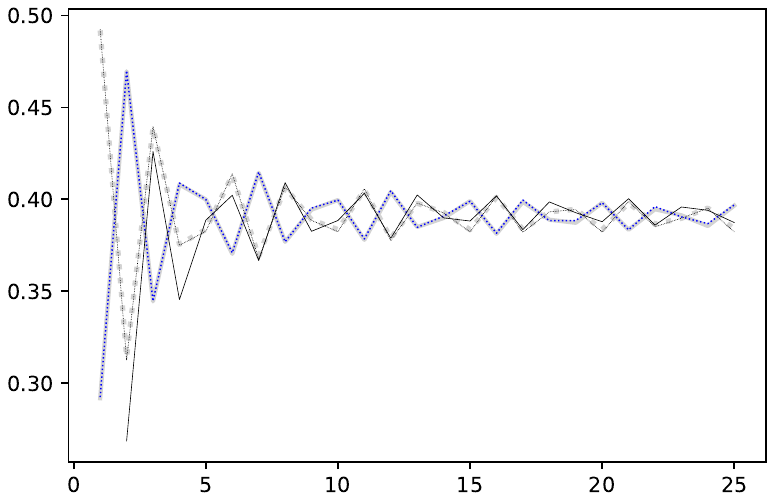}
  \caption{{\it Left:} Plot of the densities $\mathfrak K_{n,\mathrm{Kr}}^{(0)}(x,x)$ {\it(thick light gray line)},  $\mathbb{K}_{2n,\mathrm{Kr}}(x,x)$ {\it(dark gray line)}, $\mathfrak K_{n,\mathrm{Kr}}^{(2)}(x,x)$ {\it(thin black line)} for $n=50, N=150$. \newline
{\it Right:} Plot of the densities $\mathfrak K_{n,\mathrm{Kr}}^{(1)}(x,x)$ {\it(dotted blue line)}, $\mathbb K_{\sin,\phi}(x,x)-\mathbb K_{\sin,\phi}(-x,x)$ {\it(thick solid light gray line)}; $\mathfrak{K}^{(0)}_{n,\mathrm{Kr}}(x,x)$ {\it(thin dotted black line)}, $\eta^2(x)[\mathbb K_{\sin,\phi}(x,x)+\mathbb K_{\sin,\phi}(-x,x)]$ {\it(thick dotted light gray line)}; $\mathfrak K_{n,\mathrm{Kr}}^{(2)}(x,x)$ {\it(thin black line)}, for $n=50, N=150$ near the left edge. }
  \label{fig:symplectic}
\end{figure}
Combining Propositions~\ref{prop:kr_bulk}, \ref{prop:Airy_Kr}, \ref{prop:kr_discr-Hermite} and \ref{prop:kr_origin}, we obtain the complete list of local fluctuation regimes for the symplectic Schur measure~\eqref{eq:young-measure}: the discrete sine kernel in the bulk, the Airy kernel at the right edge, the discrete hard-wall sine kernel at the left corner, and the discrete Hermite
 kernel in the critical regime
$2n=N-r\sqrt N+o(\sqrt N)$.

\subsection{Hahn polynomials}

In this subsection we illustrate Propositions~\ref{As_prop_0}, \ref{As_prop_neq0} for the Hahn polynomials.
Since the whole method of Section~\ref{sec:outline-method} is formulated for symmetric
weights, we restrict ourselves to the symmetric Hahn family.

For \(\zeta>-1\), the standard Hahn weight on \(\{0,1,\dots,2N\}\) is
\begin{equation}\label{eq:hahn_weight_standard}
W_{\mathrm{Hn}}(x)
=
\binom{\zeta+x}{x}\binom{\zeta+2N-x}{2N-x},
\qquad x=0,1,\dots,2N,
\end{equation}
see \cite[Ch.9.5]{Koekoek}.
Then we obtain the symmetric weight
\begin{equation}\label{eq:hahn_weight_symmetric}
w_{\mathrm{Hn}}^{(0)}(x)
:=
W_{\mathrm{Hn}}(N+x)
=
\binom{\zeta+N+x}{N+x}\binom{\zeta+N-x}{N-x},
\qquad x\in\mathfrak X=\{-N,...,N\}.
\end{equation}

Let \(\{\widetilde{Q}_m\}_{m=0}^{2N}\) be the monic orthogonal polynomials on \(\mathfrak X\)
with respect to \(w_{\mathrm{Hn}}^{(0)}\),
which satisfy the orthogonality relation
\begin{equation}\label{eq:Hanh_orthogonality}
\sum_{x\in\mathfrak X}
\widetilde{Q}_\ell\!\left(x\right)
\widetilde{Q}_m\!\left(x\right)
w_{\mathrm{Hn}}^{(0)}(x)
=
h_m^2\,\delta_{\ell m}, \quad h_m^2=\frac{m!\,(\zeta+1)_m^2\,(2m+2\zeta+2)_{2N-m}}
{(2N-m)!\,(m+2\zeta+1)_m},
\end{equation} 
with three-term recurrence relation in the symmetric form (see \cite{Koekoek})
\begin{equation}\label{eq:hahn_ttr_symmetric}
x \widetilde{Q}_m(x)=\widetilde{Q}_{m+1}(x)+\beta_m \widetilde{Q}_{m-1}(x),
\qquad 
\beta_m=\frac{m(m+2\zeta)(2N-m+1)(2N+m+2\zeta+1)}
{4(2m+2\zeta-1)(2m+2\zeta+1)}.
\end{equation}
We denote by $\mathbb K_{n,\mathrm{Hn}}$ the Christoffel--Darboux kernel on $\mathfrak X$ associated with $w_{\mathrm{Hn}}^{(0)}$.

Now we apply the iterated Christoffel transformation and set
\[
w_{\mathrm{Hn}}^{(d)}(x) = x^{2d}w_{\mathrm{Hn}}^{(0)}(x),
\qquad
d\in\ZZ_{\geq0}.
\]
We denote by $\mathfrak K^{(d)}_{n,\mathrm{Hn}}(x,y)$ the corresponding Christoffel--Darboux kernel on the quadratic lattice. As in the Krawtchouk case, Proposition~\ref{Main_Lemma} relates successive values of $d$, while Propositions~\ref{As_prop_0} and~\ref{As_prop_neq0} transfer the known local asymptotics of the ordinary Hahn kernel to these Christoffel-transformed kernels.

We first consider the proportional regime
\[
\frac nN\longrightarrow\gamma\in(0,1),
\qquad
\frac{\zeta}{N}\longrightarrow\kappa>0.
\]
In this regime the limiting particle density has a single symmetric band $[-u_{\ast}(\gamma,\kappa),u_{\ast}(\gamma,\kappa)]$. There is a critical value $\gamma_{\mathrm c}(\kappa)$ separating two possible phase configurations. For $\gamma<\gamma_{\mathrm c}(\kappa)$ the regions outside the band are void, whereas for $\gamma>\gamma_{\mathrm c}(\kappa)$ they are saturated. At the critical value the band reaches the endpoints of the lattice:
\[
u_{\ast}(\gamma_{\mathrm c}(\kappa),\kappa)=1.
\]
The corresponding critical soft-edge regime is not considered below. The explicit density, band endpoint, critical value, and soft-edge normalization are collected in Lemma~\ref{lem:Hahn_edge_parameters}.

In the interior of the band, away from the origin, the local asymptotics are governed by the discrete sine kernel. If $\gamma\neq\gamma_{\mathrm c}(\kappa)$, the two band endpoints are generic soft edges and the local asymptotics on the $N^{1/3}$-scale are governed by the Airy kernel. In the void--band--void regime the Airy limit is formulated for the particle kernel, while in the saturated--band--saturated regime it is naturally formulated for the dual kernel. By symmetry it is sufficient to treat the right edge.

As in the Krawtchouk case, every fixed Christoffel transform has the same local asymptotic behavior away from the origin. At the origin, however, the reflected contribution survives and produces the hard-wall sine kernels described below.

Finally, we also consider a different degeneration of the Hahn parameters, with $\zeta$ fixed and $2n^2/N\to r>0$. Near the endpoint of the lattice this gives the discrete Laguerre kernel.
\begin{lemma}[Hahn particle density and edge parameters]
\label{lem:Hahn_edge_parameters}
Fix $\gamma\in(0,1)$, $\kappa>0$,  and define
\[
\Delta_{\gamma,\kappa} =
\sqrt{
\gamma(1-\gamma)
(\kappa+\gamma)
(\kappa+\gamma+1)
},
\]
\[
q_{\gamma,\kappa}
=
\kappa-2\gamma(\kappa+\gamma),
\qquad
r_{\gamma,\kappa}
=
\kappa(\kappa+1)+2\gamma(\kappa+\gamma).
\]
The right endpoint of the limiting band in the \(x/N\)-coordinate is
\[
u_{\ast}(\gamma,\kappa)
=
\frac{2\Delta_{\gamma,\kappa}}{\kappa+2\gamma}.
\]
For \(\lvert u\rvert\leq u_{\ast}(\gamma,\kappa)\), set
\[
Z_{\gamma,\kappa}(u)
=
(\kappa+2\gamma)
\sqrt{u_{\ast}(\gamma,\kappa)^2-u^2}.
\]

The limiting particle density
\(\rho_{\gamma,\kappa}\colon[-1,1]\to[0,1]\) is given as follows.
If \(q_{\gamma,\kappa}>0\), then
\[
\rho_{\gamma,\kappa}(u)
=
\begin{cases}
\displaystyle
\frac{1}{\pi}
\left[
\arctan\left(
\frac{Z_{\gamma,\kappa}(u)}
{q_{\gamma,\kappa}}
\right)
-
\arctan\left(
\frac{Z_{\gamma,\kappa}(u)}
{r_{\gamma,\kappa}}
\right)
\right],
&
\lvert u\rvert\leq u_{\ast}(\gamma,\kappa),
\\[1.4em]
0,
&
u_{\ast}(\gamma,\kappa)<\lvert u\rvert\leq 1.
\end{cases}
\]
If \(q_{\gamma,\kappa}<0\), then
\[
\rho_{\gamma,\kappa}(u)
=
\begin{cases}
\displaystyle
1-
\frac{1}{\pi}
\left[
\arctan\left(
\frac{Z_{\gamma,\kappa}(u)}
{-q_{\gamma,\kappa}}
\right)
+
\arctan\left(
\frac{Z_{\gamma,\kappa}(u)}
{r_{\gamma,\kappa}}
\right)
\right],
&
\lvert u\rvert\leq u_{\ast}(\gamma,\kappa),
\\[1.4em]
1,
&
u_{\ast}(\gamma,\kappa)<\lvert u\rvert\leq 1.
\end{cases}
\]
At the critical value \(q_{\gamma,\kappa}=0\), one has
\(u_{\ast}(\gamma,\kappa)=1\), and the limiting density is
\[
\rho_{\gamma,\kappa}(u)
=
\frac{1}{2}
-
\frac{1}{\pi}
\arctan\left(
\frac{
(\kappa+2\gamma)\sqrt{1-u^2}
}{
r_{\gamma,\kappa}
}
\right),
\qquad
\lvert u\rvert<1.
\]

The critical value separating the void and saturated regimes is
\[
\gamma_{\mathrm c}(\kappa)
=
\frac{\sqrt{\kappa^2+2\kappa}-\kappa}{2}.
\]
More precisely,
\[
q_{\gamma,\kappa}>0
\quad\Longleftrightarrow\quad
\gamma<\gamma_{\mathrm c}(\kappa),
\]
in which case the limiting configuration is
void--band--void, whereas
\[
q_{\gamma,\kappa}<0
\quad\Longleftrightarrow\quad
\gamma>\gamma_{\mathrm c}(\kappa),
\]
in which case the limiting configuration is
saturated--band--saturated. Moreover,
\[
u_{\ast}(\gamma,\kappa)<1
\quad\Longleftrightarrow\quad
\gamma\neq\gamma_{\mathrm c}(\kappa),
\]
while
\[
u_{\ast}(\gamma,\kappa)=1
\]
at the critical value.

For \(\gamma\neq\gamma_{\mathrm c}(\kappa)\), the corresponding
normalization constant in the \(N^{1/3}\) soft-edge scaling is
\[
C_{\gamma,\kappa}^{-1}
=
\frac{
2^{2/3}
\Delta_{\gamma,\kappa}^{1/3}
(\kappa+2\gamma)^{5/3}
}{
\bigl(
\lvert q_{\gamma,\kappa}\rvert
r_{\gamma,\kappa}
\bigr)^{2/3}
}.
\]
In the void-edge regime this is the normalization for the particle
kernel, while in the saturated-edge regime it is the normalization
for the dual kernel.
\end{lemma}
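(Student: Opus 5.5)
We will obtain the lemma from the equilibrium measure of the ordinary Hahn ensemble in the proportional scaling. There are two available routes: the explicit density formulas for Hahn ensembles in Johansson and Baik--Kriecherbauer--McLaughlin--Miller, or a direct constrained variational problem. We take the direct route and specialize it to the symmetric weight. In the coordinate $u=x/N\in[-1,1]$, the rank $2n$ ensemble has total mass $2\gamma$ (the density is per unit $x$, so $\int_{-1}^1\rho=2\gamma$), and the upper constraint is $\rho\le 1$. The external field is
\[
V(u)=-\lim N^{-1}\log w^{(0)}_{\mathrm{Hn}}(Nu).
\]
By Stirling's formula with $\zeta=\kappa N$, $V$ is a sum of terms $(1\pm u)\log(1\pm u)$ and $(\kappa+1\pm u)\log(\kappa+1\pm u)$, up to constants. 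Hence $V'(u)$ is an explicit combination of logarithms, and $V$ is even.

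\textbf{Step 1: the Euler--Lagrange equation.} We assume a single symmetric band $[-u_*,u_*]$, with either voids or saturated regions outside it. Because $V'$ is logarithmic, the standard trick is to differentiate the Euler--Lagrange equation and pass to the Cauchy transform $G(z)=\int\rho(s)\,ds/(z-s)$. The combination $\exp(G(z))$ times the factors coming from $V'$ is then algebraic: the jump conditions make a suitable function $F(z)$ have only the square-root branch cut $\sqrt{z^2-u_*^2}$. The resulting density is of the form $\frac1\pi\arg$ of a quotient of linear functions in $z$ and $\sqrt{u_*^2-z^2}$, which produces the two-$\arctan$ formulas.

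\textbf{Step 2: fixing the parameters.} The endpoint $u_*$ is determined by the mass condition, which is the behaviour $G(z)\sim 2\gamma/z$ at infinity, together with the regularity of $F$. I expect these conditions to reduce to a quadratic equation in $u_*^2$ whose solution is $u_*=2\Delta_{\gamma,\kappa}/(\kappa+2\gamma)$. We will check that $Z_{\gamma,\kappa}(u)$, $q_{\gamma,\kappa}$ and $r_{\gamma,\kappa}$ appear as the real and imaginary parts of the boundary values at $u$, so that the density reads $\frac1\pi[\arctan(Z/q)-\arctan(Z/r)]$. The sign of $q$ decides which branch of $\arg$ is correct, and this is what separates the void case from the saturated case (the latter giving $1-\dots$). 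At $q=0$ the two formulas agree; substituting $\kappa=2\gamma(\kappa+\gamma)$ into $\Delta$ should give $u_*=1$. Solving $q=0$ gives $\gamma^2+\kappa\gamma-\kappa/2=0$, whose positive root is $\gamma_c$. Since $q$ is decreasing in $\gamma$, we get the equivalences with $\gamma<\gamma_c$ and $\gamma>\gamma_c$. The claim that $u_*<1$ iff $q\ne0$ follows from the identity
\[
(\kappa+2\gamma)^2-4\Delta^2=q^2,
\]
which I expect to verify by direct expansion. As a consistency check, the formulas should reduce to the Krawtchouk density as $\kappa\to\infty$.

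\textbf{Step 3: the edge constant.} $C_{\gamma,\kappa}$ is read off from the square-root vanishing of the density, $\rho(u)\sim c\sqrt{u_*-u}$. Near $u_*$, $Z$ is small and $\rho\approx\frac{Z}{\pi}(1/|q|-1/r)=\frac{Z(r-|q|)}{\pi|q|r}$, with $Z\approx(\kappa+2\gamma)\sqrt{2u_*}\sqrt{u_*-u}$. The standard Airy matching $\rho(u)\approx \frac{1}{\pi}\sqrt{(u_*-u)N^{2/3}}\,C^{3/2}$ then yields $C^{-1}$. To reach the stated form one has to simplify $r-|q|$, which will equal $4\gamma(\kappa+\gamma)$ or $\kappa(\kappa+2)$ depending on the sign of $q$, and express everything through $\Delta$ using $u_*$. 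This last algebra will be checked against the displayed expression; if the two cases give different closed forms, I will recheck whether the dual kernel rescales by the density of holes.

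The main obstacle is Step 1: verifying that the proposed algebraic ansatz really solves the constrained variational problem, with the correct branch and with the inequalities $0\le\rho\le1$ satisfied. I would first try to cite the general Hahn equilibrium measure from the literature and specialize it, falling back to the direct verification only if needed.
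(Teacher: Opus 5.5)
There is a genuine gap, and it sits in the one step you actually compute. In Step 3 you expand $\rho$ itself near $u_\ast$ in both phases, $\rho\approx\frac{Z}{\pi}\bigl(\frac1{|q|}-\frac1r\bigr)$. That is right only for $q>0$. For $q<0$ the outside region is saturated, so $\rho\to1$ at $u_\ast$, and what vanishes like a square root is the hole density. The $q<0$ formula gives $1-\rho(u)\approx\frac{Z(u)}{\pi}\bigl(\frac1{-q}+\frac1r\bigr)$.

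The identity that makes the two cases agree is $r-q=(\kappa+2\gamma)^2$. With it, in both phases the vanishing quantity ($\rho$, resp.\ $1-\rho$) equals $\frac{(\kappa+2\gamma)^3\sqrt{2u_\ast}}{\pi|q|r}\sqrt{u_\ast-u}$ to leading order. This is why one formula with $|q|$ holds, and why in the saturated case it is the dual (hole) kernel that gets normalized. Your values of $r-|q|$ are both wrong. For $q>0$ it is $(\kappa+2\gamma)^2$, not $4\gamma(\kappa+\gamma)$; the $\kappa^2$ has been dropped. For $q<0$, the number $\kappa(\kappa+2)=r+q$ is the wrong combination altogether. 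So Step 3 as written produces two incorrect constants.

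The matching rule is also garbled. From $\mathbb K_{\mathrm{Ai}}(s,s)\sim\sqrt{-s}/\pi$ as $s\to-\infty$ and $D_N=CN^{1/3}$, you need $\rho(u)\approx\pi^{-1}C^{-3/2}\sqrt{u_\ast-u}$, with no $N$-dependence and exponent $-3/2$. Using $\sqrt{2u_\ast}=2\Delta^{1/2}(\kappa+2\gamma)^{-1/2}$, this gives $C^{-3/2}=2\Delta^{1/2}(\kappa+2\gamma)^{5/2}/(|q|r)$, which is the stated constant. Even then the matching is heuristic unless you invoke an Airy theorem whose scale is fixed by the square-root coefficient of $f$, resp.\ $1/c-f$. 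The paper uses \cite[Theorems~3.7, 3.8 and (3.16), (3.18)]{baik2007discrete} for exactly this.

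Second, Steps 1--2 do not yet prove anything: $u_\ast$, the appearance of $q$ and $r$, and the phase selection are all left at ``I expect''. In a direct treatment of the constrained problem, the phase is not decided by choosing a branch of $\arg$. You would also have to verify the Euler--Lagrange inequalities on $u_\ast<|u|\le1$, which go in opposite directions for voids and for saturated regions, in addition to $0\le\rho\le1$ on the band.

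Your fallback of specializing the known Hahn equilibrium measure is the paper's route. Take $M=2N+1$, $A=B=\kappa/2$, $c=\gamma$, $t=\frac12+x/M$. Then formulas (2.75)--(2.76) of \cite{baik2007discrete} give $\beta=\frac12+\Delta/(\kappa+2\gamma)$. Formula (2.73) gives $c_A=c_B=\gamma_{\mathrm c}(\kappa)$, so the mixed phase of Theorem~2.17 disappears. Formulas (2.77) and (2.79), multiplied by $\gamma$, give the density once you use the following:
\begin{itemize}
\item the identity $\arctan(aT)-\arctan(T/a)=\arctan\frac{(a-a^{-1})T}{1+T^2}$ with $T=\sqrt{(u_\ast-u)/(u_\ast+u)}$;
\item the two choices $a=\sqrt{(1+u_\ast)/(1-u_\ast)}$ and $a=\sqrt{(\kappa+1+u_\ast)/(\kappa+1-u_\ast)}$;
\item the identities $(\kappa+2\gamma)^2-4\Delta^2=q^2$ and $(\kappa+1)^2(\kappa+2\gamma)^2-4\Delta^2=r^2$.
\end{itemize}
Your checks of $\gamma_{\mathrm c}$, the monotonicity of $q$, the equivalence $u_\ast<1\iff q\neq0$, and the agreement of the two density formulas at $q=0$ are correct.
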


\begin{proof}
Under the parameter identification
\[
M=2N+1,
\qquad
A=B=\frac{\kappa}{2},
\qquad
c=\gamma,
\qquad
t=\frac12+\frac{x}{M},
\]
the formulas for the limiting particle density, the band endpoints,
and the void/saturated phase classification follow from
\cite[formulas~(2.73), (2.75)--(2.79) and
Theorem~2.17]{baik2007discrete}. In particular, the right endpoint
$\beta$ in the $t$-coordinate becomes $2\beta-1=u_{\ast}(\gamma,\kappa)$
in the $x/N$-coordinate.

Expanding the densities in \cite[formulas~(2.77) and
(2.79)]{baik2007discrete} at \(t=\beta\), and then using the
edge coefficients defined in \cite[formulas~(3.16) and
(3.18)]{baik2007discrete} together with
\cite[Theorems~3.7 and~3.8]{baik2007discrete}, gives the stated
soft-edge normalization constants. 
The details of these substitutions and simplifications are given in
Appendix~\ref{app:Hahn_edge_parameters}.
\end{proof}

\begin{prop}[Bulk regime]\label{prop:hahn_bulk}
Fix \(\kappa>0\) and
$\gamma\in
\left(0,\gamma_{\mathrm c}(\kappa)\right)\cup
\left(\gamma_{\mathrm c}(\kappa),1\right)$,
and set
\[
n=\lfloor\gamma N\rfloor,
\qquad
\zeta=\kappa N.
\]
Let \(u_{\ast}(\gamma,\kappa)\) and
\(\rho_{\gamma,\kappa}\) be as in
Lemma~\ref{lem:Hahn_edge_parameters}. Suppose that
$
0<|u|<u_{\ast}(\gamma,\kappa),
$
and set
\[
\phi_{\gamma,\kappa}(u)
=
\pi\rho_{\gamma,\kappa}(u)
\in(0,\pi).
\]
Then, for every fixed \(d\in\ZZ_{\geq0}\) and all
\(A,B\in\ZZ\),
\[
\lim_{N\to\infty}
\mathfrak K^{(d)}_{n,\mathrm{Hn}}
\bigl(
\lfloor uN\rfloor+A,
\lfloor uN\rfloor+B
\bigr)
=
\mathbb K_{\sin,\phi_{\gamma,\kappa}(u)}(A,B).
\]
\end{prop}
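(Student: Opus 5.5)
The plan mirrors the proof of Proposition~\ref{prop:kr_bulk}: verify the hypotheses of Proposition~\ref{As_prop_neq0} with $D_N=1$, $\mathbb X=\ZZ$, $v=u$, and $x_N(A)=\lfloor uN\rfloor+A$. Since $n=\lfloor\gamma N\rfloor$ with $\gamma\in(0,1)$, we have $n\to\infty$ and $N-n\to\infty$. Because $0<|u|<u_\ast(\gamma,\kappa)\le1$, the points $x_N(A)$ lie in $\mathfrak X$ for large $N$, and $x_N(A)/N\to u\neq0$. What remains is to show that, for every fixed $i\in\ZZ$ and $A,B\in\ZZ$,
\[
\mathbb K_{2n+i,\mathrm{Hn}}\bigl(x_N(A),x_N(B)\bigr)\longrightarrow\mathbb K_{\sin,\phi_{\gamma,\kappa}(u)}(A,B).
\]

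For this ordinary-kernel limit I would work in the Hahn family with the parameter identification from the proof of Lemma~\ref{lem:Hahn_edge_parameters}: $M=2N+1$, $A=B=\kappa/2$, $c=\gamma$, $t=\tfrac12+x/M$. The band interior $|u|<u_\ast(\gamma,\kappa)$ corresponds to the interior of the band in the $t$-coordinate. The bulk universality theorem of Baik--Kriecherbauer--McLaughlin--Miller \cite{baik2007discrete} then gives the discrete sine kernel with angle $\pi\rho(t)$, where the density is the one in Lemma~\ref{lem:Hahn_edge_parameters}. One subtlety is that \cite{baik2007discrete} uses ``$\zeta$ proportional to $N$'' through fixed parameters of the weight; the exact choice $\zeta=\kappa N$ fits their framework of $N$-dependent varying weights.

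Their statement covers both phase configurations: void--band--void when $\gamma<\gamma_{\mathrm c}(\kappa)$ and saturated--band--saturated when $\gamma>\gamma_{\mathrm c}(\kappa)$. If a version is only available for the particle kernel in the void case, I would handle $\gamma>\gamma_{\mathrm c}$ as in the Krawtchouk proof. The dual (hole) ensemble of a Hahn ensemble is again Hahn with modified parameters (\cite[Section~2.4]{baik2007discrete}). The sine kernel then transfers back through the identity
\[
(-1)^{A-B}\bigl(\delta_{AB}-\mathbb K_{\sin,\pi-\phi}(A,B)\bigr)=\mathbb K_{\sin,\phi}(A,B).
\]
Rank-shift stability for fixed $i$ follows from Remark~\ref{rem:fixed_shift}: the degree in \cite{baik2007discrete} is $cN+\lambda$ with $\lambda$ bounded, so replacing $2n$ by $2n+i$ does not change the limit. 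The diagonal case $A=B$ is either included in their result or follows from the diagonal Christoffel--Darboux formula with the same asymptotics.

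With these inputs, Proposition~\ref{As_prop_neq0} yields the claim for every fixed $d$. The main obstacle I expect is matching conventions. The kernel in \cite{baik2007discrete} is expressed in the $t$-variable on the lattice $\{(j+1/2)/M\}$, possibly conjugated by a diagonal factor and normalized for the full lattice $\{0,\dots,2N\}$. I would check that the rescaling $x=N(2t-1)+O(1)$ maps unit lattice steps to unit steps, so no extra $D_N$ is needed. Any diagonal conjugation $g(x)/g(y)$ should tend to $1$, as in the proof of Proposition~\ref{prop:Airy_Kr}; this holds because the sine kernel is symmetric and the conjugating ratio over bounded separations converges. I would also check that the parametrization $A=B=\kappa/2$ with $M=2N+1$ reproduces exactly $\phi_{\gamma,\kappa}(u)=\pi\rho_{\gamma,\kappa}(u)$.
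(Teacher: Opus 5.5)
Your proposal is correct and takes essentially the same route as the paper. You verify the hypotheses of Proposition~\ref{As_prop_neq0} with $D_N=1$, $\mathbb X=\ZZ$, $v=u$, $x_N(A)=\lfloor uN\rfloor+A$, and you obtain the rank-shift-stable ordinary Hahn bulk limit from \cite[Lemma~7.13]{baik2007discrete} via $M=2N+1$, $A=B=\kappa/2$, $c=\gamma$ and Remark~\ref{rem:fixed_shift}. The paper applies that lemma directly in both phases, so your particle--hole fallback is not needed; note also that, unlike the $p=\tfrac12$ Krawtchouk case, the dual of the symmetric Hahn ensemble is an associated Hahn ensemble with negative parameters, not the same family.
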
 
\begin{proof}
We verify the assumptions of
Proposition~\ref{As_prop_neq0}. Set
$X_N=\lfloor uN\rfloor$, $\mathbb X=\ZZ$. Then
${X_N}/{N}\longrightarrow u$. By Lemma~\ref{lem:Hahn_edge_parameters},
\(u_{\ast}(\gamma,\kappa)\leq1\). Hence
$
u\in(-1,1)\setminus\{0\},
$
and, for every fixed \(A\in\ZZ\),
\[
X_N+A\in\mathfrak X=\{-N,\ldots,N\}
\]
for all sufficiently large \(N\).

To apply the asymptotic results for the Hahn polynomials, put $M=2N+1$.
Since $n=\lfloor\gamma N\rfloor$ and $\zeta=\kappa N$, for every fixed $i\in\ZZ$ we have $2n+i=\gamma M+O(1)$ and $\zeta=\frac{\kappa}{2}M+O(1)$. 
Finally, the bulk-kernel asymptotics
\cite[Lemma~7.13]{baik2007discrete}, specialized to the Hahn weight via \cite[\S2.4.2 and Theorem~2.17]{baik2007discrete} together with the fixed-shift observation in Remark~\ref{rem:fixed_shift}, give, for all fixed \(i,A,B\in\ZZ\),
\[
\lim_{N\to\infty}
\mathbb K_{2n+i,\mathrm{Hn}}
(X_N+A,X_N+B)
=
\mathbb K_{\sin,\phi_{\gamma,\kappa}(u)}(A,B).
\]
See also~\cite[Theorem~1]{Gorin2008} for the corresponding Hahn bulk limit.

Here the phase is the local occupation density multiplied by
\(\pi\); by Lemma~\ref{lem:Hahn_edge_parameters}, it is precisely
\[
\phi_{\gamma,\kappa}(u)
=
\pi\rho_{\gamma,\kappa}(u).
\]

All assumptions of Proposition~\ref{As_prop_neq0} are
therefore satisfied with $D_N=1$, $x_N(A) = X_N+A$, $v=u$, and $\mathbb X=\ZZ$.
Applying that proposition completes the proof.
\end{proof}
\begin{remark}\label{rem:Hahn_critical_bulk}
The exclusion of $\gamma=\gamma_{\mathrm c}(\kappa)$ is due to the formal scope of the asymptotic results in~\cite{baik2007discrete}, rather than to a change in the local bulk behavior. At the critical value the band fills the entire macroscopic interval, so every fixed point with $|u|<1$ remains an interior band point. However, one of the endpoint assumptions in \cite[\S2.1.2]{baik2007discrete} fails, and the bulk-universality results of that source, including Lemma~7.13, are stated under those assumptions.

The corresponding sine-kernel limit can nevertheless be recovered directly, for example, by bracketing the critical-rank Hahn projection between the noncritical projections associated with $\gamma_{\mathrm c}(\kappa)-\varepsilon$ and $\gamma_{\mathrm c}(\kappa)+\varepsilon$, and then using continuity of the limiting density together with the Cauchy--Schwarz inequality for the differences of nested projections. For integer parameter sequences such as $\zeta_N=\lfloor\kappa N\rceil$, the ordinary Hahn bulk limit also follows from the specialization of \cite[Theorem~1]{Gorin2008}, after translating its parameters to the symmetric Hahn ensemble. 
\end{remark}

We next turn to the noncritical soft edges. By symmetry it is enough to consider the right edge.

\begin{prop}[Soft-edge regime]
\label{prop:Airy_Hn}
Fix $\kappa>0$ and
$\gamma\in
\left(0,\gamma_{\mathrm c}(\kappa)\right)\cup
\left(\gamma_{\mathrm c}(\kappa),1\right)$,
and set
\[
n=\lfloor\gamma N\rfloor, \quad \zeta=\kappa N.
\]
Let \(u_{\ast}=u_{\ast}(\gamma,\kappa)<1\) denote the right soft edge in the \(x/N\)-coordinate, and let
\(C_{\gamma,\kappa}>0\) be the corresponding edge-normalization constant. For \(A,B\in\RR\), set
\[
x_N(A)
=
\left\lfloor
u_{\ast}N+C_{\gamma,\kappa}A N^{1/3}
\right\rfloor.
\]

If the right soft edge is adjacent to a void region, then, for any fixed \(d\in\ZZ_{\geq0}\),
\[
\lim_{N\to\infty}
C_{\gamma,\kappa}N^{1/3}
\mathfrak K^{(d)}_{n,\mathrm{Hn}}
\bigl(x_N(A),x_N(B)\bigr)
=
\mathbb K_{\mathrm{Ai}}(A,B).
\]

If the right soft edge is adjacent to a saturated region, then, for any fixed \(d\in\ZZ_{\geq0}\),
\[
\lim_{N\to\infty}
C_{\gamma,\kappa}N^{1/3}
\Khat^{(d)}_{n,\mathrm{Hn}}
\bigl(x_N(A), x_N(B)\bigr)
=
\mathbb K_{\mathrm{Ai}}(A,B).
\]
\end{prop}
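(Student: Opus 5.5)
The plan is to follow the Krawtchouk soft-edge argument (Proposition~\ref{prop:Airy_Kr}) almost verbatim: verify the hypotheses of Proposition~\ref{As_prop_neq0} in the void case, and those of Remark~\ref{dual_kernels} in the saturated case. We take $\mathbb X=\RR$, $v=u_\ast(\gamma,\kappa)$ and $D_N=C_{\gamma,\kappa}N^{1/3}$. Since $\gamma\in(0,1)$ and $n=\lfloor\gamma N\rfloor$, we have $n\to\infty$ and $N-n\to\infty$. By Lemma~\ref{lem:Hahn_edge_parameters}, $\gamma\neq\gamma_{\mathrm c}(\kappa)$ gives $0<u_\ast<1$. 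As $D_N=o(N)$, for every fixed $A$ we get $x_N(A)/N\to u_\ast\neq0$ and $x_N(A)\in\mathfrak X$ for large $N$. So the localization condition holds with $v>0$, which Remark~\ref{dual_kernels} needs for the dual case.

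The substantive step is the ordinary Hahn soft-edge limit under fixed rank shifts: for every fixed $i\in\ZZ$,
\[
\lim_{N\to\infty} C_{\gamma,\kappa}N^{1/3}\,\mathbb K_{2n+i,\mathrm{Hn}}\bigl(x_N(A),x_N(B)\bigr)=\mathbb K_{\mathrm{Ai}}(A,B)
\]
in the void case, and the same for $\widehat{\mathbb K}_{2n+i,\mathrm{Hn}}$ in the saturated case. The argument has four parts.

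\begin{enumerate}
\item \emph{Identify the ensemble.} Use the parameter identification from the proof of Lemma~\ref{lem:Hahn_edge_parameters}: $M=2N+1$, $A=B=\kappa/2$, $c=\gamma$, $t=\tfrac12+x/M$. Then $2n+i=\gamma M+O(1)$ and $\zeta=\tfrac\kappa2 M+O(1)$, so the shift $i$ only changes the bounded parameter allowed in \cite[\S1.3.1]{baik2007discrete}, as noted in Remark~\ref{rem:fixed_shift}.
\item \emph{Void case.} Invoke the void-edge Airy asymptotics \cite[Lemma~7.16]{baik2007discrete} for the particle kernel, with the edge normalization from \cite[Theorems~3.7, 3.8]{baik2007discrete}. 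By Lemma~\ref{lem:Hahn_edge_parameters} this normalization is exactly $C_{\gamma,\kappa}$.
\item \emph{Saturated case.} Use the hole/dual-ensemble identification \cite[\S3.2, Propositions~7.2, 7.3]{baik2007discrete}. The signed complementary kernel $\widehat{\mathbb K}_{2n+i,\mathrm{Hn}}$ is then the reproducing kernel of the dual Hahn ensemble, whose rank is $2N+1-2n-i$. Its right edge is adjacent to a void, so the void-edge lemma applies again. Lemma~\ref{lem:Hahn_edge_parameters} records that $C_{\gamma,\kappa}$ is precisely the dual-kernel normalization in this regime.
\item \emph{Remove perturbations.} The center of the $t$-coordinate edge and the scale depend smoothly on $(c,A,B)$ away from criticality. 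So the $O(N^{-1})$ perturbations of the parameters move the edge by $O(1)$ lattice sites and change the scale by a factor $1+o(1)$, which is negligible on the $N^{1/3}$ scale. Converting $t$ to the centered coordinate $x$ multiplies the kernel only by the Jacobian constant absorbed into $C_{\gamma,\kappa}$.
\end{enumerate}

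Next, I would reconcile kernel normalizations. If the cited result is stated for a kernel that is not the symmetric orthonormal one, e.g.\ conjugated by $\sqrt{w(x)/w(y)}$ times an exponential factor $z^{x-y}$, I would handle the extra diagonal conjugation as in Proposition~\ref{prop:Airy_Kr}. Concretely, compute $w_{\mathrm{Hn}}^{(0)}(x+1)/w_{\mathrm{Hn}}^{(0)}(x)$, a ratio of linear factors in $x,N,\zeta$. Uniformly for $x=u_\ast N+O(N^{1/3})$ it is a constant $z^2(1+O(N^{-2/3}))$. Over a separation $|x_N(A)-x_N(B)|=O(N^{1/3})$ the conjugating factor is therefore $1+o(1)$. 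Since the kernel in \cite{baik2007discrete} is already symmetric orthonormal, I expect this step to be trivial or absent.

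With the fixed-shift limits in hand, Proposition~\ref{As_prop_neq0} gives the void-case claim for every fixed $d$, and Remark~\ref{dual_kernels} gives the saturated-case claim. The main obstacle is matching the normalization and coordinates of \cite{baik2007discrete} with ours in the hole ensemble, where signs and the dual weight (1.46) must be tracked carefully. Lemma~\ref{lem:Hahn_edge_parameters} and its appendix are meant to settle this, so the proof largely reduces to citing them correctly.
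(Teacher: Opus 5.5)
Your proposal is correct and follows essentially the same route as the paper. You verify localization with $v=u_\ast>0$ and fixed-rank-shift stability via $M=2N+1$, $2n+i=\gamma M+O(1)$, $\zeta=\frac{\kappa}{2}M+O(1)$, and you cite the void-edge Airy asymptotics of \cite{baik2007discrete} with the normalization from Lemma~\ref{lem:Hahn_edge_parameters}, directly in the void case and through the dual ensemble (Propositions~7.2, 7.3) in the saturated case, before concluding with Proposition~\ref{As_prop_neq0} and Remark~\ref{dual_kernels}; your extra diagonal-conjugation check is harmless but does not appear in the paper's argument.
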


\begin{proof}
Set
\[
D_N=C_{\gamma,\kappa}N^{1/3},
\qquad
\xi_N=u_{\ast}(\gamma,\kappa)N,
\qquad
x_N(A)=\lfloor \xi_N+D_NA\rfloor,
\qquad A\in\RR.
\]
Since \(n=\lfloor\gamma N\rfloor\) with \(\gamma\in(0,1)\), we have
\(n\to\infty\) and \(N-n\to\infty\). Moreover, by
Lemma~\ref{lem:Hahn_edge_parameters}, 
$0<u_{\ast}(\gamma,\kappa)<1$
whenever \(\gamma\neq\gamma_{\mathrm c}(\kappa)\). Since \(D_N=o(N)\), it follows that, for every fixed \(A\in\RR\),
\[
\frac{x_N(A)}{N}\longrightarrow u_{\ast}(\gamma,\kappa),
\]
and \(x_N(A)\in\mathfrak X=\{-N,\ldots,N\}\) for all sufficiently
large \(N\).

As in the proof of Proposition~\ref{prop:hahn_bulk}, put $M=2N+1$.
For every fixed $i\in\ZZ$,
\[
2n+i=\gamma M+O(1),
\qquad
\zeta=\frac{\kappa}{2}M+O(1).
\]
Thus the bounded-shift conditions in the asymptotic framework
of~\cite{baik2007discrete} are satisfied; see also
Remark~\ref{rem:fixed_shift}.

Suppose first that
\(\gamma<\gamma_{\mathrm c}(\kappa)\). By
Lemma~\ref{lem:Hahn_edge_parameters}, the right edge is then adjacent
to a void. The Airy-kernel asymptotics
\cite[Theorem~3.7 and Lemma~7.16]{baik2007discrete}, specialized to
the Hahn weight as in
\cite[\S2.4.2 and Theorem~2.17]{baik2007discrete}, together with the
normalization computed in
Lemma~\ref{lem:Hahn_edge_parameters} and
Appendix~\ref{app:Hahn_edge_parameters}, give, for every fixed
\(i\in\ZZ\) and \(A,B\in\RR\),
\[
\lim_{N\to\infty}
D_N\,
\mathbb K_{2n+i,\mathrm{Hn}}
\bigl(x_N(A),x_N(B)\bigr)
=
\mathbb K_{\mathrm{Ai}}(A,B).
\]
Hence all the assumptions of Proposition~\ref{As_prop_neq0} are
satisfied with
\[
\mathbb X=\RR,
\qquad
v=u_{\ast}(\gamma,\kappa).
\]
That proposition therefore yields, for every fixed
\(d\in\ZZ_{\geq0}\),
\[
\lim_{N\to\infty}
D_N\,
\mathfrak K^{(d)}_{n,\mathrm{Hn}}
\bigl(x_N(A),x_N(B)\bigr)
=
\mathbb K_{\mathrm{Ai}}(A,B).
\]

Suppose now that
\(\gamma>\gamma_{\mathrm c}(\kappa)\). The right edge is then adjacent
to a saturated region. Introduce the signed complementary kernel
\[
\widehat{\mathbb K}_{2n+i,\mathrm{Hn}}(x,y)
=
(-1)^{x-y}
\left(
\delta_{xy}-\mathbb K_{2n+i,\mathrm{Hn}}(x,y)
\right).
\]
Applying \cite[Lemma~7.16]{baik2007discrete} to the dual ensemble and using \cite[Propositions~7.2 and~7.3]{baik2007discrete} (equivalently, the duality argument in the proof of \cite[Theorem~3.8]{baik2007discrete}), with the same parameter identification and normalization as above, gives, for every
fixed \(i\in\ZZ\) and \(A,B\in\RR\),
\[
\lim_{N\to\infty}
D_N\,
\widehat{\mathbb K}_{2n+i,\mathrm{Hn}}
\bigl(x_N(A),x_N(B)\bigr)
=
\mathbb K_{\mathrm{Ai}}(A,B).
\]
By Remark~\ref{dual_kernels},
Proposition~\ref{As_prop_neq0} applies equally to the dual kernels.
Consequently, for every fixed \(d\in\ZZ_{\geq0}\),
\[
\lim_{N\to\infty}
D_N\,
\Khat^{(d)}_{n,\mathrm{Hn}}
\bigl(x_N(A),x_N(B)\bigr)
=
\mathbb K_{\mathrm{Ai}}(A,B).
\]
This proves both assertions.
\end{proof}

\begin{prop}[Hard-wall sine kernel regime]\label{Hahn_hard-wall_sine}
Fix $\kappa>0$ and
$\gamma\in
\left(0,\gamma_{\mathrm c}(\kappa)\right)\cup
\left(\gamma_{\mathrm c}(\kappa),1\right)$,
and set
$
n=\lfloor\gamma N\rfloor$, $\zeta=\kappa N$,
and
$
\phi_{\gamma,\kappa}
=
\pi\rho_{\gamma,\kappa}(0),
$
where \(\rho_{\gamma,\kappa}\) is the limiting particle density from
Lemma~\ref{lem:Hahn_edge_parameters}. Then, for every fixed
\(A,B\in\ZZ\),
\begin{align*}
\lim_{N\to\infty}
\mathfrak K^{(0)}_{n,\mathrm{Hn}}(A,B)
&=
\eta(A)\eta(B)
\left[
\mathbb K_{\sin,\phi_{\gamma,\kappa}}(A,B)
+
\mathbb K_{\sin,\phi_{\gamma,\kappa}}(-A,B)
\right],\\
\lim_{N\to\infty}
\mathfrak K^{(1)}_{n,\mathrm{Hn}}(A,B)
&=
\mathbb K_{\sin,\phi_{\gamma,\kappa}}(A,B)
-
\mathbb K_{\sin,\phi_{\gamma,\kappa}}(-A,B).
\end{align*}
\end{prop}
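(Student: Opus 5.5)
The plan is to reduce the statement to part~(1) of Proposition~\ref{As_prop_0}, in the same way as the proof of Proposition~\ref{prop:kr_origin}. Once we know that, for every fixed $A,B\in\ZZ$,
\[
\lim_{N\to\infty}\mathbb K_{2n,\mathrm{Hn}}(A,B)=\mathbb K_{\sin,\phi_{\gamma,\kappa}}(A,B),
\]
both displayed limits follow at once. Indeed, the identities \eqref{eq:K^0_sum} and \eqref{eq:K^1_diff} write $\mathfrak K^{(0)}_{n,\mathrm{Hn}}$ and $\mathfrak K^{(1)}_{n,\mathrm{Hn}}$ as the even and odd reflections of $\mathbb K_{2n,\mathrm{Hn}}$, with the factors $\eta(A)\eta(B)$ in the even case. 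Only this ordinary-kernel limit at the origin needs to be established.

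For that limit we follow the proof of Proposition~\ref{prop:hahn_bulk} with $X_N=0$. Put $M=2N+1$. Then $2n=\gamma M+O(1)$ and $\zeta=\frac{\kappa}{2}M+O(1)$, and the origin corresponds to $t=\frac12$ in the coordinate of Lemma~\ref{lem:Hahn_edge_parameters}. Since $\gamma\neq\gamma_{\mathrm c}(\kappa)$, we have $0<u_\ast(\gamma,\kappa)$. The point $u=0$ is therefore an interior band point, and $\rho_{\gamma,\kappa}(0)\in(0,1)$ because $Z_{\gamma,\kappa}(0)>0$. We then invoke the bulk-kernel asymptotics \cite[Lemma~7.13]{baik2007discrete}, specialized to the Hahn weight via \cite[\S2.4.2 and Theorem~2.17]{baik2007discrete}, at the interior point $t=\frac12$. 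This gives the discrete sine kernel with phase $\pi\rho_{\gamma,\kappa}(0)=\phi_{\gamma,\kappa}$, with Remark~\ref{rem:fixed_shift} covering the bounded shifts. The restriction $u\neq0$ in Proposition~\ref{prop:hahn_bulk} was needed only to apply Proposition~\ref{As_prop_neq0}; the ordinary-kernel step does not use it.

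The main obstacle is to check that the cited bulk result really produces the sine kernel itself at the origin, and not a diagonally conjugated version. Two points need attention: the sign conventions in the saturated case $\gamma>\gamma_{\mathrm c}(\kappa)$, and the conjugating factors $\sqrt{w(B)/w(A)}$, which should tend to $1$ for fixed $A,B$ because $w^{(0)}_{\mathrm{Hn}}$ is even and smooth on the scale $N$ near $0$.

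If in the saturated case the bulk statement is only available through the hole ensemble, we argue as in Proposition~\ref{prop:kr_bulk}. The signed complementary kernel $\widehat{\mathbb K}_{2n,\mathrm{Hn}}$ is the kernel of the dual ensemble by \cite[Propositions~7.2 and~7.3]{baik2007discrete}, and it converges to $\mathbb K_{\sin,\pi-\phi_{\gamma,\kappa}}$. The identity
\[
(-1)^{A-B}\bigl(\delta_{AB}-\mathbb K_{\sin,\pi-\phi}(A,B)\bigr)=\mathbb K_{\sin,\phi}(A,B)
\]
then returns the desired limit for $\mathbb K_{2n,\mathrm{Hn}}$ itself. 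Part~(1) of Proposition~\ref{As_prop_0} then applies with $\mathbb K_{\lim}=\mathbb K_{\sin,\phi_{\gamma,\kappa}}$, and this completes the argument.
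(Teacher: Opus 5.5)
Your proposal is correct and follows the paper's proof: the ordinary Hahn kernel $\mathbb K_{2n,\mathrm{Hn}}$ converges at the interior band point $u=0$ to $\mathbb K_{\sin,\phi_{\gamma,\kappa}}$, via the same bulk asymptotics of \cite{baik2007discrete} used in Proposition~\ref{prop:hahn_bulk}, and part~(1) of Proposition~\ref{As_prop_0} then gives both limits. Your extra checks (the conjugating factors tending to $1$, the particle--hole route for $\gamma>\gamma_{\mathrm c}(\kappa)$) are harmless elaborations; note only that $u_\ast(\gamma,\kappa)>0$ holds for every $\gamma\in(0,1)$, and the exclusion of $\gamma_{\mathrm c}(\kappa)$ matters only for the formal scope of the cited results.
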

\begin{proof}
The ordinary-kernel argument in the proof of Proposition~\ref{prop:hahn_bulk} applies also at $u=0$, which is an interior band point. The restriction $u\neq0$ in that proposition is needed only for the subsequent application of Proposition~\ref{As_prop_neq0}. Thus, for all fixed $A,B\in\ZZ$,
\[
\lim_{N\to\infty}
\mathbb K_{2n,\mathrm{Hn}}(A,B)
=
\mathbb K_{\sin,\phi_{\gamma,\kappa}}(A,B),
\qquad
\phi_{\gamma,\kappa}
=
\pi\rho_{\gamma,\kappa}(0).
\]
Consequently, part~(1) of Proposition~\ref{As_prop_0} applies with
$
\mathbb K_{\lim}
=
\mathbb K_{\sin,\phi_{\gamma,\kappa}},
$
and gives both asserted limits.
\end{proof}
We conclude the Hahn discussion with the fixed-$\zeta$ discrete Laguerre regime.

Let $\mathbb K_{\mathrm{DLaguerre}^{-}(r;\beta)}(x,y)$, $\beta>0$, $r>0$, be the discrete Laguerre kernel
(see \cite{Borodin_2017}):
\[
\mathbb K_{\mathrm{DLaguerre}^{-}(r;\beta)}(x,y)
=
\left(
\frac{x!\,y!}
{\Gamma(x+\beta)\Gamma(y+\beta)}
\right)^{1/2}
\int_0^r
L_x^{(\beta-1)}(t)
L_y^{(\beta-1)}(t)
t^{\beta-1}e^{-t}\,dt,
\qquad
x,y\in\ZZ_{\geq0},
\]
where \(L_x^{(\beta-1)}\) denotes the generalized Laguerre polynomial in the standard normalization.

\begin{prop}[Discrete Laguerre regime]\label{prop:hahn_discr-Laguerre}
Fix \(\zeta>-1\) and suppose that
\[
\frac{2n^2}{N}\longrightarrow r>0.
\]
Then for any fixed $d\in\ZZ_{\ge0}$ and
any $x,y\in\ZZ_{\ge0}$ we have
\[
\lim_{N\to\infty}
\mathfrak K^{(d)}_{n,\mathrm{Hn}}
\bigl(N-x,N-y\bigr)
=
\mathbb K_{\mathrm{DLaguerre}^{-}(r;\zeta+1)}(x,y).
\]
\end{prop}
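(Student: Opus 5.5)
The proof follows the same pattern as Proposition~\ref{prop:kr_discr-Hermite}. First we establish the discrete Laguerre limit for the ordinary symmetric Hahn kernel $\mathbb K_{2n+i,\mathrm{Hn}}$ at the corner $N-x$, for every fixed rank shift $i\in\ZZ$. Then we transfer it to all $\mathfrak K^{(d)}_{n,\mathrm{Hn}}$ by Proposition~\ref{As_prop_neq0}.

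For the ordinary kernel we pass to the standard Hahn lattice $\{0,\ldots,2N\}$ with $M=2N$ and parameters $\alpha=\beta=\zeta$. The point $N-x$ becomes $2N-x$, i.e.\ distance $x$ from the right endpoint. We then invoke the Hahn-to-discrete-Laguerre degeneration of Borodin--Olshanski~\cite{Borodin_2017}. In that theorem the number of particles $N_{\mathrm{BO}}=2n+i$ grows with $N_{\mathrm{BO}}^2/M\to r'$, and near the endpoint the kernel converges to $\mathbb K_{\mathrm{DLaguerre}^{-}(r';\zeta+1)}$. With $M=2N$ we get $(2n+i)^2/(2N)=2n^2/N+o(1)\to r$. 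Hence the parameter matches, and the fixed shift $i$ does not affect it. This is exactly the fixed-rank-shift stability required in Proposition~\ref{As_prop_neq0}.

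Two identifications need checking, in the spirit of Remark~\ref{rem:borodin-olshanski}.
\begin{enumerate}
\item The reflection $x\mapsto 2N-x$ exchanges the roles of $\alpha$ and $\beta$; this is harmless here since $\alpha=\beta=\zeta$, and the endpoint weight behaves like $\binom{\zeta+x}{x}\sim x^{\zeta}/\Gamma(\zeta+1)$. That is why the Laguerre parameter is $\beta-1=\zeta$.
\item The diagonal conjugation between the kernel in~\cite{Borodin_2017} and our orthonormal kernel must tend to~$1$, or at least be a gauge factor $g(x)/g(y)$ that we can absorb. We would check this via the ratio $w^{(0)}_{\mathrm{Hn}}(N-x)/w^{(0)}_{\mathrm{Hn}}(N-y)$, as in the Krawtchouk case, or use the spectral-projection realization, which produces the orthonormal kernel directly.
\end{enumerate}
An alternative route is a direct limit of the Christoffel--Darboux formula. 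As $N\to\infty$ with $n\sim\sqrt{rN/2}$, the Hahn polynomials in the variable $2N-x$ tend to Laguerre polynomials in the rescaled degree variable, so the sum over $\ell<2n$ becomes the Riemann integral $\int_0^r$. I would pursue this route only if the citation's normalization is unclear.

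Next we apply Proposition~\ref{As_prop_neq0} with $D_N=1$, $v=1$, $\mathbb X=\ZZ_{\le0}$, $x_N(A)=N+A$, and $\mathbb K_{\lim}(A,B)=\mathbb K_{\mathrm{DLaguerre}^-(r;\zeta+1)}(-A,-B)$. The hypotheses hold: $n\to\infty$, $N-n\to\infty$ since $n=O(\sqrt N)$, and $(N+A)/N\to1$. The conclusion for all fixed $d$ follows by setting $A=-x$, $B=-y$.

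The main obstacle is the first step: matching the conventions of the Hahn degeneration theorem, including the scaling constant (the factor $2$ in $2n^2/N$ and $M=2N$ versus $2N+1$) and the Laguerre parameter $\zeta+1$, while ensuring the cited result holds uniformly under fixed rank shifts.
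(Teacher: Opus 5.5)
Your proposal is correct and follows essentially the paper's proof: the paper applies Theorem~6.5 of \cite{Borodin_2017} with $2n+j$ particles, lattice endpoint $2N$ and Hahn parameters $a=b=\zeta$, after checking that $(2n+j)^2/(2N)\to r$ for every fixed shift $j$. It then invokes Proposition~\ref{As_prop_neq0} with exactly your choices $D_N=1$, $v=1$, $\mathbb X=\ZZ_{\le0}$, $x_N(A)=N+A$. The only differences are cosmetic: the paper obtains the limit at the left endpoint $-N+x$ and moves it to $N-x$ by the reflection invariance of the symmetric Hahn kernel, whereas you work at the right endpoint directly (equivalent since $\alpha=\beta=\zeta$), and the paper does not carry out the normalization checks you flag.
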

\begin{proof}
Fix $j\in\ZZ$. Since
\[
\frac{2n^2}{N}\longrightarrow r>0,
\]
we have $n\to\infty$, $n/N\to0$, and
\[
\frac{(2n+j)^2}{2N} =
\frac{2n^2}{N} +
\frac{2jn}{N} +
\frac{j^2}{2N}
\longrightarrow r.
\]
Thus Theorem~6.5 of~\cite{Borodin_2017}, applied with $2n+j$
particles, lattice endpoint $2N$, and Hahn parameters
$a=b=\zeta$, gives, for every fixed $x,y\in\ZZ_{\geq0}$,
\[
\mathbb K_{2n+j,\mathrm{Hn}}(-N+x,-N+y)
\longrightarrow
\mathbb K_{\mathrm{DLaguerre}^{-}(r;\zeta+1)}(x,y).
\]
Since the Hahn weight is symmetric, the ordinary Hahn kernel is
invariant under simultaneous reflection of its arguments. Hence
\[
\mathbb K_{2n+j,\mathrm{Hn}}(N-x,N-y)
\longrightarrow
\mathbb K_{\mathrm{DLaguerre}^{-}(r;\zeta+1)}(x,y)
\]
for every fixed $j\in\ZZ$.

We now apply Proposition~\ref{As_prop_neq0} with
\[
D_N=1,\qquad
\mathbb X=\ZZ_{\leq0},\qquad
x_N(A)=N+A,\qquad
v=1,
\]
and
\[
\mathbb K_{\lim}(A,B) =
\mathbb K_{\mathrm{DLaguerre}^{-}(r;\zeta+1)}(-A,-B).
\]
The assumptions $n\to\infty$ and $N-n\to\infty$ follow from
$2n^2/N\to r>0$, and the preceding ordinary-kernel limit holds for
every fixed rank shift. Therefore Proposition~\ref{As_prop_neq0}
yields
\[
\mathfrak K^{(d)}_{n,\mathrm{Hn}}(N+A,N+B)
\longrightarrow
\mathbb K_{\lim}(A,B)
\]
for every fixed $d\in\ZZ_{\geq0}$ and $A,B\in\ZZ_{\leq0}$.
Taking $A=-x$ and $B=-y$ proves the proposition.
\end{proof}
\section*{Conclusion and future work}

We developed a rank-one comparison approach to Christoffel transformations of symmetric discrete orthogonal polynomial ensembles. One of the main results shows that the conjugated projection associated with the transformed Christoffel--Darboux kernel differs from the original orthogonal projection by a rank-one operator on the subspace of functions vanishing at the origin. This provides a general mechanism for transferring local asymptotic information from an orthogonal polynomial ensemble to its Christoffel-transformed counterpart. 
As our main application, we obtained a complete description of local fluctuations for the symplectic Schur measure~\eqref{eq:young-measure}, including the sine, Airy, discrete Hermite, and discrete hard-wall sine regimes. 
We also applied the same transfer mechanism to symmetric Hahn ensembles, obtaining sine, Airy, and hard-wall sine limits in the proportional regime and a discrete Laguerre limit in a fixed-parameter degeneration.

In a forthcoming work, we plan to study arbitrary specializations of the symplectic Schur measure~\eqref{eq:symplectic-schur-measure} using the formalism of vertex operators and free fermions. We expect to obtain similar asymptotic results, including a discrete hard-wall sine regime, as well as a symmetric Pearcey kernel analogous to that appearing in~\cite{cuenca2024symplecticschurprocess}.

The results obtained in this paper naturally lead to several questions that remain open.

\begin{itemize}
\item 
The paper studies the iterated symmetric Christoffel transformations $w(x)\mapsto x^{2d}w(x)$ supported at the origin. It would be natural to determine whether the finite-rank comparison extends to more general Christoffel transformations and, more broadly, to the full Christoffel--Geronimus--Uvarov framework~\cite{ZHEDANOV199767}.
\item
Our current argument essentially relies on the discrete nature of the underlying orthogonality measure. We do not know whether an analogous comparison technique can be developed for orthogonal polynomials on continuous intervals with absolutely continuous measures, or whether the lattice structure is an essential ingredient of the method.
\item
We also do not know how the present technique should be modified in the case of $q$-deformations. In particular, it would be interesting to understand whether analogous operator comparison results and local limit theorems remain valid for the corresponding $q$-orthogonal polynomial ensembles.
\item
It would also be interesting to reformulate our construction in terms of a symmetric discrete Riemann--Hilbert problem, in the spirit of the discrete Riemann--Hilbert formalism developed by Borodin~\cite{Borodin_2003,10.1215/S0012-7094-06-13433-6} and its later extensions. Such a reformulation could provide a more conceptual explanation of the even/odd decomposition and the Christoffel transformation.

\end{itemize}

\subsection*{Acknowledgements} 
The authors thank Dan Betea and Travis Scrimshaw for useful discussions. The work of Anton Selemenchuk was carried out with the financial support of the Ministry of Science and Higher Education of the Russian Federation in the framework of a scientific project under agreement No. 075-15-2025-013. The work of Pavel Nikitin was supported by Beijing National Science Foundation Grant No. IS26011.

\appendix
\section{Hahn particle density and edge parameters}
\label{app:Hahn_edge_parameters}

\begin{proof}[Proof of Lemma~\ref{lem:Hahn_edge_parameters}]
We translate the Hahn equilibrium-measure formulas of
\cite[\S2.4.2]{baik2007discrete} into the notation used here.
Throughout the proof, we use the abbreviations
\[
\Delta :=
\Delta_{\gamma,\kappa} =
\sqrt{
\gamma(1-\gamma)
(\kappa+\gamma)
(\kappa+\gamma+1)
},
\]
\[
q := q_{\gamma,\kappa} =
\kappa-2\gamma(\kappa+\gamma),
\qquad
r :=
r_{\gamma,\kappa} =
\kappa(\kappa+1)+2\gamma(\kappa+\gamma),
\]
\[
Z(u)
:=
Z_{\gamma,\kappa}(u)
=
(\kappa+2\gamma)\sqrt{u_{\ast}^2-u^2}, \qquad |u|\leq u_{\ast}.
\]

\medskip
\noindent
\emph{Step 1: Identification of the parameters.}
To distinguish the size parameter in \cite{baik2007discrete} from
the parameter $N$ used here, denote the former by
\[
M=2N+1.
\]
Putting $j=N+x$, we see that the nodes and the Hahn weight in
\cite[formulas~(2.69) and~(2.71)]{baik2007discrete} agree, up to an
irrelevant normalization factor, with the nodes and weight used here
under the identification
\[
P=Q=\zeta+1,
\qquad
A=B=\frac{\zeta}{M}\longrightarrow\frac{\kappa}{2},
\qquad
c=\frac{2n}{M}\longrightarrow\gamma.
\]
Below we denote the coordinate $x_{M,j}$ from
\cite[formula~(2.69)]{baik2007discrete} by $t$ in order to distinguish
it from the integer coordinate $x$ used here. It is related to the
present coordinate by
\[
t=\frac12+\frac{x}{M},
\qquad
\frac{x}{N}=2t-1+o(1).
\]
Thus, in the limiting notation of \cite{baik2007discrete},
\[
A=B=\frac{\kappa}{2},
\qquad
c=\gamma.
\]

\medskip
\noindent
\emph{Step 2: The band endpoints.}
Under the identification above, the square root of the discriminant
in \cite[formula~(2.76)]{baik2007discrete} becomes
\[
\sqrt{
\gamma(1-\gamma)
\left(\frac{\kappa}{2}+\gamma\right)^2
(\kappa+\gamma)(\kappa+\gamma+1)
}
=
\frac{\kappa+2\gamma}{2}\Delta.
\]
Consequently, \cite[formulas~(2.75) and~(2.76)]{baik2007discrete}
give
\[
\alpha
=
\frac12-\frac{\Delta}{\kappa+2\gamma},
\qquad
\beta
=
\frac12+\frac{\Delta}{\kappa+2\gamma}.
\]
In particular, $\alpha=1-\beta$. Since the limiting coordinates are
related by $u=2t-1$, the band in the $u=x/N$ coordinate is
$[-u_{\ast},u_{\ast}]$, where
\[
u_{\ast}
=
2\beta-1
=
\frac{2\Delta}{\kappa+2\gamma}.
\]

We shall also use the identities
\[
(\kappa+2\gamma)^2-4\Delta^2=q^2,
\qquad
(\kappa+1)^2(\kappa+2\gamma)^2-4\Delta^2=r^2,
\qquad
r-q=(\kappa+2\gamma)^2,
\]
which follow by direct calculation. In particular,
\[
1-u_{\ast}^2
=
\frac{q^2}{(\kappa+2\gamma)^2}.
\]

\medskip
\noindent
\emph{Step 3: The critical value and phase classification.}
In the symmetric case, the two critical values in
\cite[formula~(2.73)]{baik2007discrete} coincide:
\[
c_A=c_B
=
\frac{\sqrt{\kappa^2+2\kappa}-\kappa}{2}
=
\gamma_{\mathrm c}(\kappa).
\]
Since $q=q_{\gamma,\kappa}$, viewed as a function of $\gamma$, is
strictly decreasing and vanishes at
$\gamma=\gamma_{\mathrm c}(\kappa)$, we have
\[
q>0
\quad\Longleftrightarrow\quad
\gamma<\gamma_{\mathrm c}(\kappa),
\qquad
q<0
\quad\Longleftrightarrow\quad
\gamma>\gamma_{\mathrm c}(\kappa).
\]

Because $c_A=c_B$, the intermediate phase in
\cite[Theorem~2.17]{baik2007discrete} disappears. Thus the limiting
configuration is void--band--void for
$\gamma<\gamma_{\mathrm c}(\kappa)$ and
saturated--band--saturated for
$\gamma>\gamma_{\mathrm c}(\kappa)$. At the critical value, the last
identity in Step~2 gives
\[
u_{\ast}=1,
\qquad
\alpha=0,
\qquad
\beta=1.
\]

\medskip
\noindent
\emph{Step 4: The limiting particle density.}
Let
\[
f_{\gamma,\kappa}(t)
=
\frac{d\mu_{\min}^{\gamma}}{dt}(t)
\]
be the equilibrium density in
\cite[Theorem~2.17]{baik2007discrete}. Since the particle-to-node
ratio tends to $\gamma$, the limiting occupation density is
\[
\rho_{\gamma,\kappa}(u)
=
\gamma
f_{\gamma,\kappa}\left(\frac{1+u}{2}\right).
\]

For $|u|<u_{\ast}$, the quantities preceding
\cite[Theorem~2.17]{baik2007discrete} become
\[
T = \sqrt{\frac{u_{\ast}-u}{u_{\ast}+u}},
\qquad
k_2=k_4^{-1} =
\sqrt{\frac{1+u_{\ast}}{1-u_{\ast}}}, 
\qquad
k_1=k_3^{-1} =
\sqrt{
\frac{\kappa+1+u_{\ast}}
     {\kappa+1-u_{\ast}}
}.
\]
For $a>1$ and $T>0$,
\[
\arctan(aT)-\arctan(a^{-1}T)
=
\arctan\left(
\frac{(a-a^{-1})T}{1+T^2}
\right),
\]
where no ambiguity of branches occurs. Using
\[
\frac{T}{1+T^2}
=
\frac{\sqrt{u_{\ast}^2-u^2}}{2u_{\ast}}
\]
and the identities from Step~2, we obtain
\[
\arctan(k_2T)-\arctan(k_4T)
=
\arctan\left(
\frac{Z(u)}{|q|}
\right),
\]
\[
\arctan(k_1T)-\arctan(k_3T)
=
\arctan\left(
\frac{Z(u)}{r}
\right).
\]

Multiplying \cite[formulas~(2.77) and~(2.79)]{baik2007discrete}
by $\gamma$ and applying the two preceding identities, we find, for $|u|\leq u_{\ast}$,
\[
\rho_{\gamma,\kappa}(u)
=
\begin{cases}
\displaystyle
\frac1\pi
\left[
\arctan\left(
\frac{Z(u)}{q}
\right)
-
\arctan\left(
\frac{Z(u)}{r}
\right)
\right],
& q>0,
\\[1.4em]
\displaystyle
1-\frac1\pi
\left[
\arctan\left(
\frac{Z(u)}{-q}
\right)
+
\arctan\left(
\frac{Z(u)}{r}
\right)
\right],
& q<0.
\end{cases}
\]
Outside the band, Theorem~2.17 gives
\[
\rho_{\gamma,\kappa}(u)
=
\begin{cases}
0, & q>0,\\
1, & q<0,
\end{cases}
\qquad
u_{\ast}<|u|\leq1.
\]

Finally, at the critical value $q=0$, one has $u_{\ast}=1$.
Taking $\gamma\to\gamma_{\mathrm c}(\kappa)$ from either side in the
preceding formulas gives
\[
\rho_{\gamma,\kappa}(u)
=
\frac12
-
\frac1\pi
\arctan\left(
\frac{
(\kappa+2\gamma)\sqrt{1-u^2}
}{
r
}
\right),
\qquad
|u|<1.
\]

\medskip
\noindent
\emph{Step 5: Expansion at a right band--void edge.}
Assume $q>0$. Since $u=2t-1$ and $u_{\ast}=2\beta-1$, as
$t\uparrow\beta$ we have
\[
Z(u)
=
2(\kappa+2\gamma)\sqrt{u_{\ast}}\,
\sqrt{\beta-t}
+
O\bigl((\beta-t)^{3/2}\bigr).
\]
Using $\arctan z=z+O(z^3)$ in the density formula from Step~4 gives
\[
\rho_{\gamma,\kappa}(u)
=
\frac{
2(\kappa+2\gamma)\sqrt{u_{\ast}}
}{\pi}
\left(
\frac1q-\frac1r
\right)
\sqrt{\beta-t}
+
O\bigl((\beta-t)^{3/2}\bigr).
\]
Since $\rho_{\gamma,\kappa}=\gamma f_{\gamma,\kappa}$,
\cite[formula~(3.16)]{baik2007discrete} therefore yields
\[
\begin{aligned}
\pi\gamma B_\beta^R
&=
2(\kappa+2\gamma)\sqrt{u_{\ast}}
\left(
\frac1q-\frac1r
\right) =
\frac{
2\sqrt{2}\,
\Delta^{1/2}
(\kappa+2\gamma)^{5/2}
}{
qr
},
\end{aligned}
\]
where we used
\[
r-q=(\kappa+2\gamma)^2,
\qquad
u_{\ast}=\frac{2\Delta}{\kappa+2\gamma}.
\]

\medskip
\noindent
\emph{Step 6: The right band--saturated edge coefficient.}
Assume $q<0$. The density formula from Step~4 and
$\arctan z=z+O(z^3)$ give, as $t\uparrow\beta$,
\[
1-\rho_{\gamma,\kappa}(u)
=
\frac{
2(\kappa+2\gamma)\sqrt{u_{\ast}}
}{\pi}
\left(
\frac1{-q}+\frac1r
\right)
\sqrt{\beta-t}
+
O\bigl((\beta-t)^{3/2}\bigr).
\]
Since
\[
\frac1\gamma-f_{\gamma,\kappa}(t)
=
\frac{1-\rho_{\gamma,\kappa}(u)}{\gamma},
\]
the definition in \cite[formula~(3.18)]{baik2007discrete} yields
\[
\begin{aligned}
\pi(1-\gamma)\overline B_\beta^R =
2(\kappa+2\gamma)\sqrt{u_{\ast}}
\left(
\frac1{-q}+\frac1r
\right) =
\frac{
2\sqrt{2}\,
\Delta^{1/2}
(\kappa+2\gamma)^{5/2}
}{
(-q)r
}.
\end{aligned}
\]

Thus, in both regimes, the quantity entering the Airy coordinate is
\begin{equation}\label{eq:hahn_Airy_normalization}
\mathcal G_{\gamma,\kappa} :=
\frac{
2\sqrt{2}\,
\Delta^{1/2}
(\kappa+2\gamma)^{5/2}
}{
|q|r
} =
\begin{cases}
\pi\gamma B_\beta^R, & q>0,\\
\pi(1-\gamma)\overline B_\beta^R, & q<0.
\end{cases}
\end{equation}

\medskip
\noindent
\emph{Step 7: Conversion to the $N^{1/3}$-scale.}
By \cite[Theorems~3.7 and~3.8]{baik2007discrete}, the Airy
coordinate at the right edge is $\xi =
\bigl(M\mathcal G_{\gamma,\kappa}\bigr)^{2/3}(t-\beta)$. Since
\[
t=\frac12+\frac{x}{M},
\qquad
\beta=\frac{1+u_{\ast}}2,
\qquad
M=2N+1,
\]
we have
\[
t-\beta
=
\frac{x-\frac{M}{2}u_{\ast}}{M}
=
\frac{x-Nu_{\ast}+O(1)}{M}.
\]
For $x = Nu_{\ast} + C_{\gamma,\kappa}A N^{1/3} + O(1)$, it follows that $\xi  \longrightarrow 2^{-1/3} \mathcal G_{\gamma,\kappa}^{2/3} 
C_{\gamma,\kappa}A$. Thus $\xi\to A$ precisely when $C_{\gamma,\kappa}^{-1}
= 2^{-1/3}\mathcal G_{\gamma,\kappa}^{2/3}$.
Using~\eqref{eq:hahn_Airy_normalization}, we obtain
\[
\begin{aligned}
C_{\gamma,\kappa}^{-1} =
2^{-1/3}
\left(
\frac{
2\sqrt{2}\,
\Delta^{1/2}
(\kappa+2\gamma)^{5/2}
}{
|q|r
}
\right)^{2/3} =
\frac{
2^{2/3} \Delta^{1/3}(\kappa+2\gamma)^{5/3}
}{
\bigl(|q|r\bigr)^{2/3}
},
\end{aligned}
\]
as claimed.
\end{proof}

\section{Spectral interpretation of the first Christoffel transform} \label{sec:spectral_first_christoffel}

We explain how the spectral-projection method of Borodin and Olshanski \cite{borodin2007asymptotics,Borodin_2017} applies directly to the kernels with $d=0$ and $d=1$. These kernels correspond, respectively, to the even and odd parts of a spectral projection for the original difference operator. 

Let $\mathfrak X_N=\{-N,\ldots,N\}$ and suppose that a self-adjoint second-order difference operator $\mathcal D_N$ on $\ell^2(\mathfrak X_N)$ satisfies
\[
\mathcal D_Np_m=-\nu_{m,N}p_m,
\qquad
\nu_{0,N}<\nu_{1,N}<\cdots<\nu_{2N,N}.
\]
For a self-adjoint operator $\mathcal A$, write $[\mathcal A]_+:=\mathbf 1_{(0,+\infty)}(\mathcal A)$. For $1\leq n\leq N$ and $c_N>0$, set
\[
\mathcal A_N =
\frac{\mathcal D_N+\nu_{2n,N}\II}{c_N}.
\]
Then $[\mathcal A_N]_+$ is the orthogonal projection onto $\Span\{p_0,\ldots,p_{2n-1}\}$, and hence its matrix kernel is $\mathbb K_{2n}$.

Denote the even and odd subspaces of $\ell^2(\mathfrak X_N)$ by $\ell^2_\pm(\mathfrak X_N)$. Since $p_m(-x)=(-1)^mp_m(x)$, these subspaces reduce $\mathcal A_N$. Define the unitary maps
\[
\begin{aligned}
U_{N,+}&:
\ell^2_+(\mathfrak X_N)
\longrightarrow
\ell^2(\{0,\ldots,N\}),
&
(U_{N,+}f)(x)&=
\sqrt2\,\eta(x)f(x),\\
U_{N,-}&:
\ell^2_-(\mathfrak X_N)
\longrightarrow
\ell^2(\{1,\ldots,N\}),
&
(U_{N,-}f)(x)&=
\sqrt2\,f(x),
\end{aligned}
\]
where $\eta(x)=(1+\delta_{x,0})^{-1/2}$, and put
\[
\mathcal A_{N,\pm} =
U_{N,\pm}
\bigl(
\mathcal A_N|_{\ell^2_\pm(\mathfrak X_N)}
\bigr)
U_{N,\pm}^{-1}.
\]

\begin{prop}[Parity realization]
\label{prop:spectral_first_christoffel}
For $1\leq n\leq N$,
\[
\begin{aligned}
\mathfrak K_n^{(0)}(x,y)
&=
\left\langle
[\mathcal A_{N,+}]_+\delta_y,\delta_x
\right\rangle,
&&
x,y\in\{0,\ldots,N\},\\
\mathfrak K_n^{(1)}(x,y)
&=
\left\langle
[\mathcal A_{N,-}]_+\delta_y,\delta_x
\right\rangle,
&&
x,y\in\{1,\ldots,N\}.
\end{aligned}
\]
\end{prop}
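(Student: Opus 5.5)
The plan is to compute the spectral projections $[\mathcal A_{N,\pm}]_+$ explicitly in the eigenbasis and then compare them with the formulas \eqref{CD-kernel_0} and \eqref{eq:K^1_diff}.

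\emph{Step 1: spectral decomposition of $\mathcal A_N$.} Since $c_N>0$, the operator $\mathcal A_N$ has eigenfunctions $p_m$ with eigenvalues $(\nu_{2n,N}-\nu_{m,N})/c_N$. The $\nu_{m,N}$ are strictly increasing, so this eigenvalue is positive exactly when $m<2n$. Hence $[\mathcal A_N]_+$ is the orthogonal projection onto $\Span\{p_0,\dots,p_{2n-1}\}$.

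\emph{Step 2: restriction to the parity subspaces.} Because $p_m(-x)=(-1)^mp_m(x)$, we have $\ell^2_+=\langle p_{2i}\rangle$ and $\ell^2_-=\langle p_{2i+1}\rangle$. Functional calculus commutes with restriction to a reducing subspace, so
\[
[\mathcal A_N|_{\ell^2_+}]_+=\text{projection onto }\langle p_0,p_2,\dots,p_{2n-2}\rangle ,
\]
\[
[\mathcal A_N|_{\ell^2_-}]_+=\text{projection onto }\langle p_1,\dots,p_{2n-1}\rangle .
\]
Functional calculus also commutes with unitary conjugation, so $[\mathcal A_{N,\pm}]_+=U_{N,\pm}[\mathcal A_N|_{\ell^2_\pm}]_+U_{N,\pm}^{-1}$. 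This is also a projection, onto the span of the orthonormal vectors $U_{N,+}p_{2\ell}$ (respectively $U_{N,-}p_{2\ell+1}$), for $0\le\ell\le n-1$.

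\emph{Step 3: check unitarity and read off the kernels.} First I verify that $U_{N,\pm}$ are unitary. For even $f$,
\[
\|f\|^2=|f(0)|^2+2\sum_{x=1}^N|f(x)|^2=\sum_{x=0}^N 2\eta(x)^2|f(x)|^2 .
\]
Odd $f$ vanish at $0$ and give $2\sum_{x\ge1}|f(x)|^2$. Both maps are bijective, since an even or odd function is determined by its values on $x\ge0$ or $x\ge1$.

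The matrix element of a projection onto the span of an orthonormal system $\{e_\ell\}$ is $\sum_\ell e_\ell(x)e_\ell(y)$, using real-valuedness. This gives the two kernels:
\[
\langle[\mathcal A_{N,+}]_+\delta_y,\delta_x\rangle=2\eta(x)\eta(y)\sum_{\ell=0}^{n-1}p_{2\ell}(x)p_{2\ell}(y),
\]
which is $\mathfrak K^{(0)}_n(x,y)$ by \eqref{CD-kernel_0};
\[
\langle[\mathcal A_{N,-}]_+\delta_y,\delta_x\rangle=2\sum_{\ell=0}^{n-1}p_{2\ell+1}(x)p_{2\ell+1}(y),
\]
which equals $2\sum_\ell\hat p_{2\ell}(x)\hat p_{2\ell}(y)=\mathfrak K^{(1)}_n(x,y)$ by \eqref{even_hat} and \eqref{CD-kernel_1}, as in \eqref{eq:K^1_diff}.

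\emph{Main obstacle.} The argument is essentially routine; the one point needing care is the identity $\hat p_{2\ell}=p_{2\ell+1}$ as functions on $x\neq0$. To check it:
\begin{itemize}
\item from \eqref{even_hat}, $x\hat P_{2\ell}=P_{2\ell+1}$ and $\hat h_{2\ell}=h_{2\ell+1}$;
\item therefore $\hat p_{2\ell}(x)=\hat P_{2\ell}(x)\,x\sqrt{w(x)}/\hat h_{2\ell}=P_{2\ell+1}(x)\sqrt{w(x)}/h_{2\ell+1}=p_{2\ell+1}(x)$.
\end{itemize}
I should also note that the norm of $\hat P_{2\ell}$ is taken with respect to $x^2w$, which matches the definition of $\hat p_m$.
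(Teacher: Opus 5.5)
Your proposal is correct and follows the same route as the paper: identify $[\mathcal A_N]_+$ as the projection onto $\Span\{p_0,\ldots,p_{2n-1}\}$, split it into its even and odd parts, transport by $U_{N,\pm}$, and read off the kernels from \eqref{CD-kernel_0} and from the identity $\hat p_{2\ell}=p_{2\ell+1}$ that underlies \eqref{eq:K^1_diff}. You also supply the routine checks the paper leaves implicit: unitarity of $U_{N,\pm}$, and compatibility of the functional calculus with reducing subspaces and with unitary conjugation.
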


\begin{proof}
The positive spectral subspace of $\mathcal A_N$ is spanned by $p_0,\ldots,p_{2n-1}$. Its even and odd parts are therefore $\Span\{p_0,p_2,\ldots,p_{2n-2}\}$, $\Span\{p_1,p_3,\ldots,p_{2n-1}\}$, respectively. Transporting the corresponding projections by $U_{N,+}$ and $U_{N,-}$ and using \eqref{sym_ker} and \eqref{eq:K^1_diff} gives the result.
\end{proof}

We next record the resulting convergence argument. Extend $\mathcal A_N$ to $\ell^2(\ZZ)$ by
\[
\widehat{\mathcal A}_N =
\mathcal A_N
\oplus
(-\II)_{\ell^2(\ZZ\setminus\mathfrak X_N)}
\]
and, for localization centres $X_N\in\{0,\ldots,N\}$ satisfying $N-X_N\to\infty$, set
\[
\widetilde{\mathcal A}_N =
\tau_{X_N}^{-1}
\widehat{\mathcal A}_N
\tau_{X_N},
\qquad
(\tau_Xf)(x)=f(x-X).
\]
Suppose that there is an essentially self-adjoint symmetric operator ${\mathcal A}$ on $\ell_{0}(\ZZ)$ such that
\[
\widetilde{\mathcal A}_Nf
\longrightarrow
{\mathcal A}f,
\qquad
f\in \ell_{0}(\ZZ),
\]
and that $0\notin\sigma_{\mathrm p}(\overline{{\mathcal A}})$. For tridiagonal operators, the displayed convergence amounts to coefficientwise convergence at every fixed local coordinate. By \cite[Propositions~4.2 and~4.3]{Borodin_2017},
\[
[\widetilde{\mathcal A}_N]_+
\xrightarrow{\mathrm s}
[\overline{{\mathcal A}}]_+.
\]
Consequently, if $\mathbb K_{\lim}(A,B) :=
\left\langle
[\overline{{\mathcal A}}]_+\delta_B,\delta_A
\right\rangle$, then
\[
\mathbb K_{2n}(X_N+A,X_N+B)
\longrightarrow
\mathbb K_{\lim}(A,B)
\]
for every fixed $A,B\in\ZZ$.

If $X_N\to\infty$, the reflected term vanishes. Indeed, writing $P_N=[\widetilde{\mathcal{A}}_N]_+$ and $P=[\overline{\mathcal{A}}]_+$, we have
\[
\mathbb K_{2n}(-X_N-A,X_N+B) =
\left\langle
P_N\delta_B,\delta_{-2X_N-A}
\right\rangle
\longrightarrow0,
\]
because $P_N\delta_B\to P\delta_B$ in $\ell^2(\ZZ)$ and the coordinates of the fixed vector $P\delta_B$ tend to zero at infinity. Combining this with the parity realizations above, we obtain
\[
\mathfrak K_n^{(0)}(X_N+A,X_N+B),
\quad
\mathfrak K_n^{(1)}(X_N+A,X_N+B)
\longrightarrow
\mathbb K_{\lim}(A,B).
\]

At the central point $X_N=0$, the reflection remains fixed and the image term survives:
\[
\begin{aligned}
\mathfrak K_n^{(0)}(A,B)
&\longrightarrow
\eta(A)\eta(B)
\bigl[
\mathbb K_{\lim}(A,B) +
\mathbb K_{\lim}(-A,B)
\bigr],\\
\mathfrak K_n^{(1)}(A,B)
&\longrightarrow
\mathbb K_{\lim}(A,B) -
\mathbb K_{\lim}(-A,B).
\end{aligned}
\]
These are precisely the even and odd hard-wall modifications of the limiting kernel.

The parity realization is special to the first Christoffel transform. For $d\geq2$, the relevant polynomial subspace is no longer a parity sector of the original difference operator. The higher transforms are therefore treated by the rank-one comparison in Proposition~\ref{Main_Lemma} and the transfer principle in Proposition~\ref{As_prop_neq0}. Finding a direct spectral-projection realization for these higher transforms remains an open problem.

\bibliographystyle{plainnat}
\bibliography{lit}

\end{document}